   \def\MR#1{}
\numberwithin{equation}{subsection}
\newtheorem{theorem}[equation]{Theorem}
\newtheorem*{thm}{Theorem}
\newtheorem{proposition}[equation]{Proposition}
\newtheorem{lemma}[equation]{Lemma}
\newtheorem{corollary}[equation]{Corollary}
\newtheorem{conjecture}[equation]{Conjecture}
\theoremstyle{definition}
\newtheorem{rmk}[equation]{Remark}
\newenvironment{remark}[1][]{\begin{rmk}[#1] \pushQED{\qed}}{\popQED \end{rmk}}
\newtheorem{eg}[equation]{Example}
\newenvironment{example}[1][]{\begin{eg}[#1] \pushQED{\qed}}{\popQED \end{eg}}
\newtheorem{defn}[equation]{Definition}
\newenvironment{definition}[1][]{\begin{defn}[#1]\pushQED{\qed}}{\popQED \end{defn}}
\newtheorem{cons}[equation]{Construction}
\newenvironment{construction}[1][]{\begin{cons}[#1]\pushQED{\qed}}{\popQED \end{cons}}
\newenvironment{subeqns}[1][]{\addtocounter{equation}{-1}
\begin{subequations}

}{\end{subequations}}
\newcommand{\cA}{\mathcal{A}}
\newcommand{\rB}{\mathrm{B}}
\newcommand{\bC}{\mathbf{C}}
\newcommand{\rC}{\mathrm{C}}
\newcommand{\rD}{\mathrm{D}}
\newcommand{\bE}{\mathbf{E}}
\newcommand{\cE}{\mathcal{E}}
\newcommand{\bF}{\mathbf{F}}
\newcommand{\rH}{\mathrm{H}}
\newcommand{\bK}{\mathbf{K}}
\newcommand{\rK}{\mathrm{K}}
\newcommand{\cM}{\mathcal{M}}
\newcommand{\bO}{\mathbf{O}}
\newcommand{\cO}{\mathcal{O}}
\newcommand{\cQ}{\mathcal{Q}}
\newcommand{\cR}{\mathcal{R}}
\newcommand{\bS}{\mathbf{S}}
\newcommand{\rS}{\mathrm{S}}
\newcommand{\bU}{\mathbf{U}}
\newcommand{\cU}{\mathcal{U}}
\newcommand{\rU}{\mathrm{U}}
\newcommand{\bZ}{\mathbf{Z}}
\newcommand{\Cliff}{\mathrm{Cliff}}
\newcommand{\fg}{\mathfrak{g}}
\newcommand{\bh}{\mathbf{h}}
\newcommand{\fk}{\mathfrak{k}}
\newcommand{\fn}{\mathfrak{n}}
\newcommand{\fo}{\mathfrak{o}}
\newcommand{\fp}{\mathfrak{p}}
\newcommand{\cat}{\mathcal}
\newcommand{\spo}{\mathfrak{spo}}
\newcommand{\defi}[1]{{\bf\upshape\sffamily #1}}
\renewcommand{\phi}{\varphi}
\renewcommand{\emptyset}{\varnothing}
\newcommand{\eps}{\varepsilon}
\newcommand{\ol}[1]{\overline{#1}}
\newcommand{\ul}[1]{\underline{#1}}
\newcommand{\g}{\fg}
\newcommand{\uU}{\rU}
\newcommand{\gl}{\fgl}
\newcommand{\so}{\fso}
\newcommand{\tor}{\operatorname{Tor}}
\def\Ddots{\mathinner{\mkern1mu\raise\p@
\vbox{\kern7\p@\hbox{.}}\mkern2mu
\raise4\p@\hbox{.}\mkern2mu\raise7\p@\hbox{.}\mkern1mu}}
\DeclareMathOperator{\coker}{coker}
\renewcommand{\hom}{\operatorname{Hom}}
\DeclareMathOperator{\rank}{rank}
\DeclareMathOperator{\Tor}{Tor}
\DeclareMathOperator{\Spec}{Spec}
\newcommand{\GL}{\mathbf{GL}}
\newcommand{\Sp}{\mathbf{Sp}}
\newcommand{\Gr}{\mathbf{Gr}}
\newcommand{\fgl}{\mathfrak{gl}}
\newcommand{\fso}{\mathfrak{so}}
\newcommand{\fsp}{\mathfrak{sp}}
\newcommand{\op}{\textrm{op}}
\title[Character coincidence between types B and BC]{An infinite-dimensional character coincidence between Lie algebras of type B and BC}
\author{Steven V Sam}
\address{Department of Mathematics, University of California San Diego}
\email{ssam@ucsd.edu}
\author{Keller VandeBogert}
\address{Department of Mathematics, University of Notre Dame, Notre Dame}
\email{kvandebo@nd.edu}
\begin{document}

\maketitle

\begin{abstract}
    We utilize an isomorphism between the character rings of the odd orthogonal group and the orthosymplectic supergroup to understand equivariant positivity properties of the type B quadric hypersurface ring. Our main result establishes a well-behaved functorial construction of Schur modules ``with respect to'' the quadric hypersurface ring, an essential fact used by the authors in previous work to construct pure free resolutions. Our techniques combine ideas from commutative algebra, Lie theory, and algebraic geometry to understand representations of orthosymplectic supergroups and their corresponding type B counterparts. 
\end{abstract}

\tableofcontents

\section{Introduction}

This paper explores connections between Lie theory, combinatorics, and commutative algebra, centering on a certain infinite-dimensional character coincidence between orthosymplectic Lie superalgebras and orthogonal Lie algebras. Our motivation for this construction is to understand the problem of constructing ``quadric" Schur functors and their representation-theoretic properties by taking advantage of a well-known isomorphism between the character rings of the odd-orthogonal Lie group $\bO(2m+1)$ (type B) and the orthosymplectic supergroup $\Sp\bO (2m | 1)$ (type BC). Though this isomorphism has been understood by physicists for quite some time, there are some curious consequences related to equivariant positivity properties of the quadric hypersurface ring that are particularly relevant for the work \cite{sam2024total}. The central construction tying these two character rings together is a variant of the Lie-theoretic construction put forth in \cite{sam2024total}, yielding an infinite-dimensional bridge between type B and BC Lie algebras.

Given any integer $m \geq 1$, the isomorphism of character rings $\text{K} (\bO (2m+1)) \cong {\rm K} (\Sp \bO (2m | 1)) $ implies that for any finite-dimensional $\bO (2m+1)$-representation $W$, there is a corresponding representation of $\Sp \bO (2m|1)$ whose character coincides with that of $W$ and vice versa. For details, see the proof of Corollary~\ref{cor:char-id}. It follows that there are well-defined representations filling in the ``gaps'' in the following table:
\begin{center}
    \begin{tabular}{|c|c|}
  \hline
  $\Sp\bO(2m|1)$  & $\bO(2m+1)$\\  \hline
  irreducible & irreducible \\ \hline
  Schur functors $\bS_\lambda (\bC^{2m|1})$ & ?? \\ \hline
  ?? & Schur functors $\bS_\lambda (\bC^{2m+1})$ \\ \hline
  \end{tabular}

\end{center}
It is often easier/more natural to construct these representations directly on one side of the table as opposed to the other. That being said, the corresponding representations on the ``other side'' may exhibit structure or properties that motivate a direct construction. For instance, the $\bO (2m+1)$-representation whose character is equal to the $\Sp \bO (2m |1)$-character of the Schur functor $\bS_{\lambda} (\bC^{2m|1})$, whatever it is, \emph{must} satisfy the Jacobi--Trudi identity ``with respect to'' the quadric hypersurface ring corresponding to a maximal rank quadric. This observation immediately yields $\bO (2m+1)$-equivariant total positivity of the (odd) quadric hypersurface ring, but as motivated by \cite{sam2024total}, a more ``global" approach to the problem of equivariant positivity lends itself to constructions that behave much better for algebraic purposes. Our main theorem shows that this proposed positivity program applies directly in the type B setting:

\begin{thm}
    Let $V$ be a $(2m+1)$-dimensional orthogonal space and set $A \coloneqq S^\bullet (V) / (q)$ where $q \in S^2 (V)$ denotes the induced quadric. Then, for every $n > 0$ there exists an $\bO(V) \times \GL_n$-representation $Z_{V, \bC^n}$ with ``good functorial properties'' such that
    \[
      (Z_{V, \bC^n})_\lambda \cong A_{\lambda_1} \otimes \cdots \otimes A_{\lambda_n},
    \]
    where $\lambda \in \bZ^n$ is any weight and $(-)_\lambda$ denotes the $\GL_n$-weight space.

    The $\bO(2m+1) \times \GL_n$-character of $Z_{V,\bC^n}$ coincides with the $\Sp \bO (2m|1) \times \GL_n$-character of $S^\bullet (\bC^{2m|1} \otimes \bC^n)$. 
\end{thm}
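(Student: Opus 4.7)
The plan is to transport the super Cauchy decomposition from the orthosymplectic side across the character ring isomorphism of Corollary~\ref{cor:char-id}, and then verify the weight-space identity by reducing it to the $n=1$ case. First, I would apply super Cauchy to obtain the decomposition of $\Sp\bO(2m|1) \times \GL_n$-representations
\[
S^\bullet(\bC^{2m|1} \otimes \bC^n) \;\cong\; \bigoplus_\lambda \bS_\lambda(\bC^{2m|1}) \otimes \bS_\lambda(\bC^n),
\]
where the sum runs over partitions $\lambda$ fitting in the $(2m \,|\, 1)$-hook. Via Corollary~\ref{cor:char-id}, each super-Schur functor $\bS_\lambda(\bC^{2m|1})$ has a $\Sp\bO(2m|1)$-supercharacter equal to the $\bO(V)$-character of a unique representation $W_\lambda$, which should be thought of as the ``Schur functor $\bS_\lambda$ with respect to the quadric ring $A$''. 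I would then define
\[
Z_{V, \bC^n} \;:=\; \bigoplus_\lambda W_\lambda \otimes \bS_\lambda(\bC^n)
\]
as an $\bO(V) \times \GL_n$-representation, so that the character identity asserted in the second half of the theorem holds by construction.

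Next, I would verify the weight-space identity $(Z_{V, \bC^n})_\mu \cong A_{\mu_1} \otimes \cdots \otimes A_{\mu_n}$ at the level of characters. The $\GL_n$-weight $\mu$ piece of the decomposition above is $\bigoplus_\lambda W_\lambda^{\oplus K_{\lambda, \mu}}$, where $K_{\lambda, \mu}$ denotes the Kostka number; transporting across the character iso, its character equals $\sum_\lambda K_{\lambda, \mu}\, \mathrm{sch}(\bS_\lambda(\bC^{2m|1}))$. By the classical reverse Pieri identity $\sum_\lambda K_{\lambda, \mu} \bS_\lambda = S^{\mu_1} \otimes \cdots \otimes S^{\mu_n}$ (which holds in the super setting as well), this simplifies to $\prod_i \mathrm{sch}(S^{\mu_i}(\bC^{2m|1}))$. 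The desired isomorphism therefore reduces multiplicatively to the $n=1$ instance $\chi(A_d) = \mathrm{sch}(S^d(\bC^{2m|1}))$, which is precisely the content of Corollary~\ref{cor:char-id}. Since both sides are honest semisimple $\bO(V)$-representations, equality of characters promotes to an isomorphism of representations.

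The functorial properties required by the theorem are manifest from the Schur-functor decomposition: the factor $\bS_\lambda(\bC^n)$ is functorial in $\bC^n$, and the family $\{W_\lambda\}$ assembles into a functor in $V$ (the ``quadric Schur functor'' attached to the ring $A$). The principal obstacle in this plan is to ensure that each $W_\lambda$ obtained via the character isomorphism is an honest (rather than virtual) $\bO(V)$-representation; equivalently, that the character iso respects positivity of irreducible decompositions. This positivity is the main substantive input from Corollary~\ref{cor:char-id}, where the $W_\lambda$ are constructed directly via Lie-theoretic and commutative-algebraic methods, bypassing the need for an a priori positivity argument.
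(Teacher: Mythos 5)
Your character-level argument is correct: the super Cauchy decomposition, the identification of $\GL_n$-weight spaces via Kostka numbers, the reduction to the $n=1$ identity $[A_d] = [S^d(\bC^{2m|1})]$, and the promotion from characters to honest isomorphisms (using semisimplicity of both $\Sp\bO(2m|1)$ and $\bO(V)$) all work. Your worry at the end about positivity of $W_\lambda$ is in fact resolved by semisimplicity: since $\bS_\lambda(\bC^{2m|1})$ is a genuine direct sum of $\Sp\bO(2m|1)$-irreducibles, and the character isomorphism takes irreducibles to irreducibles, $W_\lambda$ is automatically an honest $\bO(V)$-representation. However, this whole line of reasoning is exactly what the paper dismisses in the introduction with the sentence ``If we ignore the `good functorial properties' clause, then our theorem is a formal consequence of the above discussion regarding characters.'' You have reproduced the formal consequence, not the theorem.

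The genuine gap is your identification of the ``good functorial properties.'' They are \emph{not} the manifest Schur-functoriality of $\bigoplus_\lambda W_\lambda \otimes \bS_\lambda(\bC^n)$ in $V$ and $\bC^n$. What the paper means --- and what the companion work relies on --- is the construction $Z_{V,\bC^n} \coloneq \uU(\fg_{V,\bC^n})/I$ as a quotient of the enveloping algebra of the two-step nilpotent Lie algebra $\fg_{V,\bC^n} = (V\otimes\bC^n) \oplus \bigwedge^2\bC^n$ by the left ideal generated by $S^2\bC^n$, which endows $Z_{V,\bC^n}$ with the structure of a left $\rU(\fg_{V,\bC^n})$-module, and in fact of an irreducible $\fso(2m+2n+1)$-representation. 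Your formal direct sum carries none of this structure a priori. The substantive content of the paper is to prove that \emph{this specific quotient} has the character you computed; that requires showing $I$ is a $\fso(\bE)$-subrepresentation via Boe's theorem on standard homomorphisms between parabolic Verma modules (Proposition~\ref{prop:main}), establishing the Howe dual pair $(\Sp\bO(2m|1),\fso(2n))$ (Proposition~\ref{prop:spo-howe}), computing the Lie algebra homology of the relevant infinite-dimensional $\fso(2n)$-modules through the Littlewood complex machinery (Theorem~\ref{thm:MlambdaTor} and its consequences), and sandwiching the character of $Z_{V,\bC^n}$ between an upper bound from the PBW degeneration and a lower bound from the unique irreducible quotients (Corollary~\ref{cor:irredAndTrivialJ}). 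None of this appears in your proposal, so while you have correctly reproduced the easy half, the argument that the Lie-theoretic $Z_{V,\bC^n}$ actually realizes the asserted character is entirely missing.
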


In the terminology of \cite{sam2024total}, this endows the odd quadric hypersurface ring with a \defi{Jacobi--Trudi structure}. In the case of the even quadric hypersurface ring (i.e., the type D case), there are some failures of equivariant positivity that turn out to be related to character coincidences stemming from work of Proctor \cite{proctor1988odd} on the ``odd" symplectic group $\Sp (2m+1)$; see Remark \ref{rk:oddSymplecticRk}. This means that the Theorem fails in the type D setting, yielding a surprising distinction in the type B versus type D positivity behavior of the quadric hypersurface ring. 

If we ignore the ``good functorial properties'' clause, then our theorem is a formal consequence of the above discussion regarding characters. The functorial properties will be used in \cite{sam2024total} and not here, so rather than spell them out explicitly in this introduction, we will instead outline the construction of $Z_{V,\bC^n}$. As before, let $V$ denote an odd-dimensional orthogonal space with induced quadric $q \in S^2 (V)$ of full rank. Let $\beta$ be the symmetric bilinear form on $V$ associated to $q$. We consider the $\bZ$-graded Lie algebra concentrated in degrees 1 and 2
\[
  \g_{V,\bC^n} = (V \otimes \bC^n) \oplus \bigwedge^2 \bC^n
\]
with Lie bracket on degree 1 elements given by $[v \otimes e, v' \otimes e'] = \beta(v,v') e \wedge e'$. The form $\beta$ gives an identification $V \cong V^*$, and hence there is a canonical inclusion
\[
  S^2(\bC^n) \subset S^2(V \otimes \bC^n) \subset \uU(\g_{V,\bC^n}),
\]
where $\uU$ denotes universal enveloping algebra. Let $I$ denote the left ideal generated by the subspace $S^2(\bC^n)$ and define
\[
  Z_{V,\bC^n} \coloneqq \uU (\fg_{V,\bC^n}) / I.
\]
We note that $I$ is not a 2-sided ideal in general, so that $Z_{V, \bC^n}$ does not have any natural algebra structure. Nonetheless, from this construction, it inherits the structure of a left $\rU(\fg_{V,\bC^n})$-module, and this leads to the ``good functorial properties'' alluded to above.

 There is a further significance to this construction which is crucial to our approach. The Lie algebra $\g_{V,\bC^n}$ is the nilpotent radical of a parabolic subalgebra of $\mathfrak{so}(2m+2n+1)$ and $\uU(\g_{V,\bC^n})$ is a parabolic Verma module (technically, for what follows we have to twist this by a non-trivial character). Our main result here is that the left ideal $I$ is in fact an $\mathfrak{so}(2m+2n+1)$-subrepresentation and the quotient $\uU(\g_{V,\bC^n})/I$ is an irreducible $\mathfrak{so}(2m+2n+1)$-representation. This family of representations interpolates between spinor representations and the quadric hypersurface, see Example~\ref{ex:interpolate}.

 Proving this result is, in a sense, the main bulk of this paper, so we now outline how this is done. First, we show that $I$ is the image of a standard homomorphism between parabolic Verma modules using work of Boe \cite{boe1985homomorphisms}. Second, by using the PBW degeneration, we get an upper bound on the character of $Z_{V,\bC^n}$ from the character of the commutative algebra quotient
 \[
   S^\bullet(V \otimes \bC^n) / (S^2 \bC^n) \otimes S^\bullet(\bigwedge^2 \bC^n).
 \]
 We analyze this degeneration by considering the action of $\bO(V) \times \fso(2n)$; this representation is a direct sum of finite-dimensional $\bO(V)$-representations, and each multiplicity space is a parabolic Verma module for $\fso(2n)$. This immediately gives a lower bound for the character of $Z_{V,\bC^n}$ where the parabolic Verma modules are replaced by their unique irreducible quotients. Ultimately we prove that $Z_{V,\bC^n}$ agrees with this lower bound.

 The infinite-dimensional $\fso(2n)$-representations that appear in the decomposition are very similar to those appearing in classical Howe dual pairs, but the weights do not exactly match up. Instead, we establish that there is a Howe dual pair (which we believe is new) that involves the orthosymplectic Lie supergroup $\Sp\bO(2m|1)$ in which all of these representations show up. Our main contribution here is to calculate the Lie algebra homology of these infinite-dimensional representations. This is very closely related to the notion of Littlewood complexes, as studied in \cite{sam2015littlewood, sam2013homology}. Remarkably, these homology calculations allow us to conclude that the only $\fso(2m+2n+1)$-representation whose character satisfies the two bounds above must agree with the lower bound. We remark that in order to develop everything in a logical fashion, the order of the sections of this article is reversed from the order of the topics in this explanation.

 We also remark that the homological calculations above allow us to compute the Tor groups for a class of non-reduced determinantal ideals which behave in many ways like the classically well-studied examples (determinantal ideals, Pfaffian ideals, and determinantal ideals of symmetric matrices). We highlight this special case in \S\ref{ss:ideal}.
 
Returning to the Jacobi--Trudi structure, the Schur modules for the algebra $A$ that we construct are isotypic components of $\uU(\g_{V,\bC^n})/I$ under the action of $\GL_n$. Since the $\bO(V)$ action commutes with $\GL_n$, these Schur modules are $\bO(V)$-representations, but they are practically never irreducible. Invoking a form of Howe duality, we conclude that they are irreducible modules for the subalgebra of $\GL_n$-invariants in $\uU(\mathfrak{so}(2m+2n+1))$. The nature of this algebra is still mysterious to us (see \S\ref{subsec:quadricAsIrred}).

We also want to mention a tentative connection with the theory of minimal affinizations, a certain class of finite-dimensional representations over quantum loop algebras associated with a semisimple Lie algebra $\g$. In their approach to studying them, Chari and Greenstein \cite{chari-greenstein} consider a class of modules over the current algebra $\g \otimes \bC[t]$ and conjectured a formula for their character, which was proven in \cite{JTdet-minaff}. Surprisingly, for $\g=\mathfrak{so}(2m+1)$, these formulas coincide identically with the characters of the Schur modules constructed from the Jacobi--Trudi structure above. We suspect that there is some relation between the current algebra and the invariant subalgebra $\uU(\mathfrak{so}(2m+2n+1))^{\GL_n}$.

Finally, we have only focused on the missing representations on the $\bO(2m+1)$ side of the table above, but it is reasonable to expect that the missing representations on the $\Sp\bO(2m|1)$ side of the table can be treated analogously. We collect together some basic facts and conjectures about this case in \S\ref{ss:orthosymp-version}.

\medskip 
\noindent {\bf Organization. }
We end with a brief explanation of this paper's organization. As explained above, the summary of this paper's main arguments above is in reverse order from how the sections are actually laid out. In \S \ref{sec:prelims} we recall some background related to Frobenius coordinates and Bott's algorithm in this setting. In \S \ref{sec:SpoLittlewood} we formulate an analogue of the Littlewood complexes introduced in \cite{sam2015littlewood} for the orthosymplectic group $\Sp \bO (2m|1)$, including some character computations relating representations of $\Sp \bO (2m|1)$ with $\bO (2m+1)$, and in \S \ref{sec:homologyOfLittlewood} we compute the homology of these complexes using a method that is similar to that used in \cite{sam2015littlewood}. In \S \ref{sec:howe} we prove that there is a Howe dual pair $(\Sp \bO (2m|1) , \so (2n))$ with respect to the representation $S^\bullet (\bC^{2m|1} \otimes \bC^n)$; this fact will be crucial for establishing that $Z_{V,\bC^n}$ and $S^\bullet (\bC^{2m|1} \otimes \bC^n)$ have the same character via the isomorphism of character rings $\rK (\bO (2m+1)) \cong \rK(\Sp \bO (2m | 1)) $. In \S \ref{sec:generalOddRank} we prove the main results of the paper relating to the representation $Z_{V,E}$ and its character. 

\section{Preliminaries}\label{sec:prelims}

\subsection{Partitions and Frobenius coordinates}

A \defi{partition} is a tuple of nonnegative integers $\lambda = (\lambda_1 , \dots , \lambda_n)$ with $\lambda_1 \geq \cdots \geq \lambda_n$. Its \defi{length}, denoted $\ell(\lambda)$, is the number of nonzero $\lambda_i$. A \defi{Young diagram} of shape $\lambda$ is a left-justified diagram whose $i$th row has length $\lambda_i$. Given a partition $\lambda$, its \defi{transpose} $\lambda^T$ is the partition obtained by reading off the column lengths of the Young diagram corresponding to $\lambda$. In a formula: $\lambda^T_i = \# \{j \mid \lambda_j \ge i\}$. We often treat partitions as vectors, e.g., $2\lambda = (2\lambda_1,2\lambda_2,\dots)$.

  The notation $(a^b)$ will denote a sequence of $b$ copies of $a$.

\begin{definition}
    Let $\lambda = (\lambda_1 , \dots , \lambda_n)$ be a partition.  The \defi{rank} of $\lambda$, denoted $\rank(\lambda)$, is the largest integer $d$ such that $\lambda_d \ge d$. Then the \defi{Frobenius coordinates} $(a_1 , \dots , a_d \mid b_1 , \dots , b_d)$ of $\lambda$ (here $d$ is the rank of $\lambda$) are given by reading off arm and leg lengths along the main diagonal of the Young diagram representing $\lambda$. More precisely:
    \[
      a_i \coloneq \lambda_i -i, \qquad
    b_i \coloneq \lambda_i^T - i. \qedhere
      \]
  \end{definition}

\begin{definition}\label{def:theQsets}
    Given integers $i \in \bZ$ and $d \geq 0$, define the set
    \[
      Q_i^d \coloneq \{ (a_1 , \dots , a_d \mid b_1 , \dots , b_d) \mid a_j - b_j = i \quad \text{for} \ j = 1 , \dots , d \}.
    \]
    Likewise, define the sets
    $$Q_i \coloneq \bigcup_{d \geq 0} Q_i^d, \quad Q_i^{\rm ev} \coloneq \bigcup_{\substack{d \geq 0 \\ d \ \text{even}}} Q_i^d, \quad Q_i^{\rm od} \coloneq \bigcup_{\substack{d \geq 0 \\ d \ \text{odd}}} Q_i^d.$$
    Given an element $(\underline{a} \mid \underline{b}) \in Q_i^d$, define the \defi{degree} by
    \[
      \deg(\underline{a} \mid \underline{b}) \coloneq a_1 + \cdots + a_d . \qedhere
    \]
  \end{definition}

    For background on Schur functors, see \cite[\S 2]{weyman2003}, but note that the indexing used there is transposed from standard conventions, i.e., the Schur functor indexed by $\lambda$ there is what we would call $\bS_{\lambda^T}$.
  
  \begin{remark}\label{rk:FrobAbuse}
    When $i$ is fixed, given a partition $\lambda$ with Frobenius coordinates $(\ul{a} \mid \ul{b}) \in Q_i$, we will sometimes abuse notation and write
    $$\bS_{\underline{a}} \coloneq \bS_{(\underline{a} | \underline{b})} \coloneq \bS_\lambda$$
    to denote the Schur functor corresponding to $\lambda$. The convention of sometimes ignoring the term $\underline{b}$ should cause no confusion, since $\underline{b}$ is determined by $\underline{a}$ whenever $(\underline{a} \mid \underline{b}) \in Q_i$.
  \end{remark}

  For the following, see \cite[Propositions 2.3.8, 2.3.9]{weyman2003}. We use the notation $S^d$ to denote the $d$th symmetric power functor, and $S^\bullet = \bigoplus_{d \ge 0} S^d$. Similarly, $\bigwedge^d$ is the $d$th exterior power functor and $\bigwedge^\bullet = \bigoplus_{d \ge 0} \bigwedge^d$.

\begin{proposition} \label{prop:plethysms}
    We have the following direct sum decompositions:
        $$ S^\bullet ( S^2 ) = \bigoplus_{\lambda} \bS_{2  \lambda}, \qquad S^\bullet ( \bigwedge^2 ) = \bigoplus_{\lambda} \bS_{(2 \lambda)^T}, \qquad 
        \bigwedge^\bullet (S^2) = \bigoplus_{\lambda \in Q_{1}} \bS_\lambda, \qquad  
        \bigwedge^\bullet (\bigwedge^2 ) = \bigoplus_{\lambda \in Q_{-1}} \bS_\lambda$$
        where the left hand side of each equation is a composition of exterior and/or symmetric powers and the first two sums range over all partitions $\lambda$.
\end{proposition}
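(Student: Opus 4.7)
The plan is to verify each identity at the level of characters. All objects in sight are polynomial $\GL_V$-functors, so the four equalities of direct sums of Schur functors reduce to four identities of symmetric functions after evaluating on a vector space $V$ with formal eigenvalues $x_1,\dots,x_n$. I would first record the characters of the four left-hand sides,
$$\prod_{i\le j}\frac{1}{1-x_i x_j},\qquad \prod_{i<j}\frac{1}{1-x_i x_j},\qquad \prod_{i\le j}(1+x_i x_j),\qquad \prod_{i<j}(1+x_i x_j),$$
which are immediate from $\sum_{d\ge 0}\dim(S^d W)\,t^d = \prod_k(1-t y_k)^{-1}$ and the analogous formula for exterior powers, applied with $W=S^2 V$ and $W=\bigwedge^2 V$ (whose eigenvalues are the products $x_i x_j$ with $i\le j$ or $i<j$ respectively).

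Next, the four plethystic identities become the classical Littlewood-type symmetric function identities
$$\prod_{i\le j}\frac{1}{1-x_i x_j}=\sum_\mu s_{2\mu}(x),\qquad \prod_{i<j}\frac{1}{1-x_i x_j}=\sum_\mu s_{(2\mu)^T}(x),$$
for the symmetric cases, and
$$\prod_{i\le j}(1+x_i x_j)=\sum_{\lambda\in Q_1}s_\lambda(x),\qquad \prod_{i<j}(1+x_i x_j)=\sum_{\lambda\in Q_{-1}}s_\lambda(x),$$
for the exterior cases. My preferred proof of each is via the bialternant formula: multiply through by the Weyl denominator $\prod_{i<j}(x_i-x_j)$ and evaluate the resulting expression as a Cauchy- or Schur--Pfaffian-type determinant. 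An equivalent and more combinatorial route (the one used in Weyman) is a bijective proof via RSK for pairs of semistandard tableaux with an appropriate symmetry constraint: symmetric tableaux yield the first two identities, while the column-strict variant applied to (skew-)symmetric matrices gives the last two. Once polynomial functor decompositions are reconstructed from the Schur expansions of the characters, equality of functors follows.

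The main obstacle is the bookkeeping for the exterior cases, namely identifying the combinatorial indexing set from the tableau bijection with $Q_{\pm 1}$. The key point is that the diagonal factors $(1+x_i^2)$, which appear in the $S^2$-product but not in the $\bigwedge^2$-product, force the Frobenius difference $a_j - b_j$ to be $+1$ in one case and $-1$ in the other, with each $j$ corresponding to one ``diagonal hook'' in the Young diagram. Since all four identities appear as Propositions 2.3.8 and 2.3.9 in Weyman's book, I would defer to that reference for the full combinatorial details rather than reproducing them here.
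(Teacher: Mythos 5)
Your proposal is correct and essentially matches the paper, which simply states the result as a citation to Propositions 2.3.8 and 2.3.9 of Weyman's book; those references give the RSK-type bijective arguments you describe. Your reduction to the four Littlewood character identities and the final deferral to Weyman for the combinatorial details amounts to the same proof, just unpacked a bit more.
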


  \subsection{Borel--Weil--Bott with Frobenius coordinates}\label{subsec:BWBFrob}

Given a vector space $F$ of dimension $n$ and an integer $0 \le k \le n$, we let $\Gr(k,F) = \Gr(k,n)$ denote the Grassmannian which parametrizes $k$-dimensional subspaces of $F$. It comes equipped with a tautological exact sequence of vector bundles
\[
  0 \to \cR \to F \times \Gr(k,F) \to \cQ \to 0
\]
where $\cR = \{(x,W) \mid x \in W\}$ has rank $k$ and $\cQ$ has rank $n-k$. We recall Bott's theorem for the Grassmannian (see \cite[Corollary 4.1.9]{weyman2003}):

\begin{theorem}[Borel--Weil--Bott] \label{thm:bott}
  Let $\lambda,\mu$ be partitions with $\ell(\lambda) \le k$ and $\ell(\mu) \le n-k$ and set $\cE = \bS_\lambda(\cR) \otimes \bS_\mu(\cQ)$. Define
  \[
    \alpha = (\mu_1,\dots,\mu_{n-k}, \lambda_1,\dots,\lambda_k)
  \]
  and $\rho = (n-1,n-2,\dots,1,0)$.
  \begin{enumerate}
  \item If $\alpha + \rho$ has any repetitions, then all cohomology of $\cE$ vanishes.
    
  \item Otherwise, there is a unique permutation $\sigma$ such that $\sigma(\alpha+\rho)$ is decreasing. Then
    \[
      \rH^i(\Gr(k,F), \cE) = \begin{cases} \bS_{\sigma(\alpha+\rho)-\rho}(F) & \text{if $i=\ell(w)$}\\ 0 & \text{if $i \ne \ell(w)$} \end{cases}.
    \]
  \end{enumerate}
\end{theorem}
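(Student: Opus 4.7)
The plan is to lift the problem to the full flag variety $\Fl(F)$ via the natural projection $\pi \colon \Fl(F) \to \Gr(k,F)$, and then to prove Borel--Weil--Bott on the flag variety by iterating simple reflections along a tower of $\bP^1$-bundles. For the first step, let $\cL_\alpha$ be the equivariant line bundle on $\Fl(F)$ corresponding to the weight $\alpha = (\mu_1,\ldots,\mu_{n-k},\lambda_1,\ldots,\lambda_k)$. The fibers of $\pi$ are products of flag varieties in the tautological sub and quotient, and applying ordinary Borel--Weil on the fiber directions (valid because $\lambda$ and $\mu$ are partitions, hence dominant in each factor) yields $\pi_* \cL_\alpha = \cE$ and $R^i \pi_* \cL_\alpha = 0$ for $i > 0$. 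The Leray spectral sequence then gives $\rH^i(\Gr(k,F),\cE) = \rH^i(\Fl(F),\cL_\alpha)$.

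Next, I would prove BWB on $\Fl(F)$ by decomposing the map to a point as a composition of $\bP^1$-bundles, each obtained by forgetting one consecutive component of the flag. For any such bundle $p_i$, the restriction of $\cL_\beta$ to a fiber is $\cO_{\bP^1}\bigl((\beta+\rho)_i - (\beta+\rho)_{i+1} - 1\bigr)$; together with the elementary fact that $\rH^*(\bP^1,\cO(d))$ is concentrated in degree $0$ for $d \ge 0$, vanishes identically for $d = -1$, and is concentrated in degree $1$ for $d \le -2$, this gives the reflection rule. Namely, if $(\beta+\rho)_i = (\beta+\rho)_{i+1}$ then all cohomology of $\cL_\beta$ vanishes, while otherwise one may swap these two entries, with a compensating $\rho$-shift and a shift of cohomological degree by one.

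Iterating through a reduced expression for the unique permutation $\sigma$ that sorts $\alpha + \rho$ into strictly decreasing order, one accumulates a total cohomological shift of $\ell(\sigma)$ and arrives at a dominant line bundle whose global sections are identified by classical Borel--Weil with $\bS_{\sigma(\alpha+\rho)-\rho}(F)$. If $\alpha + \rho$ has a repetition, then at some point along any attempted sorting two adjacent entries coincide, and the reflection rule forces the global vanishing claimed in part (1) of the theorem.

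The hard part, I expect, is the bookkeeping rather than any single deep idea: verifying $R^i \pi_*\cL_\alpha = 0$ for $i > 0$ requires a careful fiberwise application of Borel--Weil together with the partition hypothesis on $\lambda$ and $\mu$, and matching the reflection shifts to the algorithm of sorting $\alpha+\rho$ requires tracking the $\rho$-shift, the sign conventions in identifying weights with line bundles, and the precise orientation of the $\bP^1$-bundles. Conceptually nothing goes beyond $\bP^1$-cohomology and Serre duality, but keeping conventions consistent throughout is where the real work lies.
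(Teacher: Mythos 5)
The paper does not prove this theorem; it simply cites \cite[Corollary~4.1.9]{weyman2003}. Your proposed argument — pull back along $\pi\colon \Fl(F)\to\Gr(k,F)$, use fiberwise Borel--Weil to identify $\pi_*\cL_\alpha = \cE$ with vanishing higher direct images, then prove Bott on $\Fl(F)$ by sorting $\alpha+\rho$ through a tower of $\bP^1$-bundles, with the reflection rule coming from $\rH^*(\bP^1,\cO(d))$ — is the standard Demazure-style proof and is essentially the argument given in the cited reference, so there is no divergence to report. One cosmetic remark: the formula $(\beta+\rho)_i - (\beta+\rho)_{i+1} - 1$ for the fiberwise degree simplifies to $\beta_i - \beta_{i+1}$ since consecutive entries of $\rho$ differ by one, and your case analysis ($d\ge 0$, $d=-1$, $d\le -2$) matches the dotted action $s_i\bullet\beta$ exactly, so the bookkeeping you flag as the ``hard part'' does in fact close up correctly.
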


\begin{corollary}\label{cor:easyFrobCase}
    Let $(\underline{a} \mid \underline{b}) = (a_1 ,\dots , a_d \mid b_1 , \dots ,b_d)$ denote the Frobenius coordinates of a partition $\lambda = (\lambda_1 , \dots , \lambda_k)$. If $a_d < n-k$, then all cohomology of $\bS_{(\ul{a} | \ul{b})}(\cR)$ vanishes. Otherwise, we have
    \[
      \rH^i (\Gr (k,F) , \bS_{(\underline{a} \mid \underline{b})} (\cR) ) = \begin{cases} 
        \bS_{(\underline{a} - (n-k)^d \mid \underline{b} + (n-k)^d)} (F) & \text{if $i = d(n-k)$} \\
        0 & \text{if $i \ne d(n-k)$}
      \end{cases}.
    \]
  \end{corollary}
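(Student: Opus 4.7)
The plan is to apply Theorem~\ref{thm:bott} with $\mu$ the empty partition, so that the bundle is $\cE = \bS_\lambda(\cR)$. Then $\alpha = (\underbrace{0,\dots,0}_{n-k}, \lambda_1,\dots,\lambda_k)$, and
\[
  \alpha+\rho = (n-1,n-2,\dots,k,\; \lambda_1+k-1,\lambda_2+k-2,\dots,\lambda_k).
\]
The first $n-k$ entries are the values $\{k,k+1,\dots,n-1\}$, while the last $k$ entries are strictly decreasing (because $\lambda$ is a partition). Hence any repetition must occur between these two blocks.

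First I would reinterpret the second block via Frobenius coordinates: for $i = 1,\dots,d$, we have $\lambda_i + k - i = a_i + k$, and for $i > d$, $\lambda_i + k - i \le d + k - (d+1) = k-1$, so only the values $a_1+k > \cdots > a_d+k$ can collide with the first block $\{k,\dots,n-1\}$. A collision occurs precisely when some $a_i + k \le n-1$, i.e.\ some $a_i < n-k$, and since the $a_i$ are strictly decreasing this is equivalent to $a_d < n-k$. In this case Bott's theorem gives vanishing, establishing the first clause.

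When $a_d \ge n-k$, each $a_i+k \ge n$ and each $\lambda_i + k - i \le k-1$ for $i>d$, so there are no repetitions. The unique sorting permutation $\sigma$ moves positions $n-k+1,\dots,n-k+d$ to positions $1,\dots,d$ (and shifts positions $1,\dots,n-k$ rightward by $d$), while leaving positions $n-k+d+1,\dots,n$ fixed. Counting inversions gives $\ell(\sigma) = d(n-k)$, matching the claimed cohomological degree.

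It remains to identify $\sigma(\alpha+\rho) - \rho$ as the partition with Frobenius coordinates $(\underline{a}-(n-k)^d \mid \underline{b}+(n-k)^d)$. Writing out $\sigma(\alpha+\rho)-\rho$ explicitly yields a partition $\mu$ with $\mu_i = \lambda_i - (n-k)$ for $i \le d$, $\mu_i = d$ for $d < i \le d + (n-k)$, and $\mu_i = \lambda_{i-(n-k)}$ for $i > d+(n-k)$. A direct check shows $\mu$ has rank exactly $d$ (using $a_d \ge n-k$), with new arm lengths $\mu_i - i = a_i - (n-k)$ and new leg lengths $\mu_i^T - i = \lambda_i^T + (n-k) - i = b_i + (n-k)$; the latter is the only place one must track the bookkeeping carefully, but it follows immediately from counting the rows of $\mu$ of length $\ge i$ across the three blocks. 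This is the main computational step, but it is straightforward rather than conceptually difficult.
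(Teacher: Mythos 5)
Your proposal is correct and follows essentially the same route as the paper: apply Theorem~\ref{thm:bott} with empty $\mu$, observe that only the first $d$ entries of the second block (the $a_i+k$) can collide with $\{k,\dots,n-1\}$, and identify the resulting sorted partition via Frobenius coordinates. The paper phrases the same argument by writing $\lambda = (a_1+1,\dots,a_d+d,\nu_1,\dots,\nu_{k-d})$ with $\nu_1 \le d$, but the content — including the explicit form of $\sigma(\alpha+\rho)-\rho$ in three blocks and the inversion count $d(n-k)$ — is identical.
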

  
  \begin{proof}
    First, we have
    \[
      \lambda  = (a_1+1 , a_2+2, \dots , a_d+d, \nu_1 , \dots , \nu_{k-d})
    \]
    for some partition $\nu \coloneq (\nu_1 , \dots , \nu_{k-d})$ with $\nu_1 \leq d$.  In the notation of Bott's theorem above, 
    \[
      \alpha + \rho = (n-1 , n-2 , \dots , k , a_1 + k , a_2 + k, \dots , a_d + k, \nu_1 +k-d-1 , \dots, \nu_{k-d}).
    \]
    Since $\nu_1 \le d$, we have $a_d+k > \nu_1 + k  - d -1$, and hence this sequence has repetitions if and only if $a_d < n-k$. If $a_d \ge n-k$, then the sorted sequence is
    \[
      (a_1 + k , a_2 + k, \dots , a_d + k, n-1 , n-2 , \dots , k , \nu_1 +k-d-1 , \dots, \nu_{k-d})
    \]
    and the permutation $\sigma$ has length $d(n-k)$. Finally, subtracting $\rho$ gives
    \[
      (a_1 + k-n+1, \dots , a_d + k-n+d, \underbrace{d, \dots, d}_{n-k}, \nu_1 , \dots, \nu_{k-d})
    \]
    and this partition has Frobenius coordinates $(a_1-(n-k), \dots, a_d - (n-k) \mid b_1 + (n-k), \dots, b_d + (n-k))$.
  \end{proof}
  
\section{Littlewood complexes for $\Sp\bO(2m|1)$}\label{sec:SpoLittlewood}

\subsection{Construction}\label{sec:LittlewoodCons}

In this section, we introduce an orthosymplectic analogue of the classical Littlewood complexes, as studied in \cite{sam2013homology, sam2017infinite}. These complexes provide resolutions of specific representations of \(\Sp\bO(2m|1)\), whose characters will play a central role in later computations. We will use repeatedly that the finite-dimensional representations of $\Sp\bO(2m|1)$ are finite-dimensional (see \cite[Corollary 2.33]{cheng2012dualities}).
  
    Let $E$ be any vector space and $U \coloneq \bC^{2m | 1}$ a superspace (so $U_0 = \bC^{2m}$, $U_1=\bC$) equipped with an orthosymplectic form $\omega \in \bigwedge^2 U_0 \oplus S^2 U_1$. The superalgebra $S^\bullet (E \otimes U)$ comes equipped with an $S^\bullet (\bigwedge^2 E)$-module structure via the inclusion
    \[
      \bigwedge^2 E \subset \bigwedge^2 E \otimes \left( \bigwedge^2 U_0 \oplus S^2 U_1 \right)  \subset S^\bullet (E \otimes U).
    \]
    Consider the $\GL(E)$-action on the Koszul complex
        \[
      \bK_{E,U} = S^\bullet (E \otimes U) \otimes_{S^\bullet (\bigwedge^2 E)} \bigwedge^\bullet (\bigwedge^2 E).
    \]
    We let $L^\lambda_\bullet(U)$ denote the $\bS_\lambda(E)$-isotypic component of $\bK_{E,U}$ and call it the \defi{Littlewood complex} for $\Sp\bO(U) \cong \Sp\bO (2m |1)$; it is independent of the choice of $E$ as long as $\dim E \ge \ell(\lambda)$ (so that $\bS_\lambda (E) \ne 0$). More concretely, the complex $L^\lambda_\bullet (U)$ has terms described explicitly via 
    \[
      L_i^\lambda (U) = \bigoplus_{\substack{\mu \in Q_{-1} \\ |\mu| = 2i}} \bS_{\lambda / \mu} (U). \qedhere
  \]
By semisimplicity, $\bK_{E,U}$  has a $\GL (E) \times \Sp\bO(U)$-equivariant decomposition
    \[
      \bK_{E,U} \cong \bigoplus_{\lambda} \bS_\lambda (E) \otimes  L^\lambda_\bullet (U).
    \]

\begin{proposition}\label{prop:superRegSeq}
  If $\dim E \le m$, then the Koszul complex $\bK_{E,U}$ is acyclic. In particular, $L^\lambda_\bullet(U)$ is acyclic whenever $\ell(\lambda) \le m$ and we have an isomorphism of representations
  \[
    S^\bullet(E \otimes U) \cong (S^\bullet(E \otimes U) / (\bigwedge^2 E) ) \otimes S^\bullet(\bigwedge^2 E).
  \]
\end{proposition}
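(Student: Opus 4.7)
\emph{Plan.} Since $U_1 = \bC$ is one-dimensional, we identify
\[
  S^\bullet(E \otimes U) = S^\bullet(E \otimes U_0) \otimes \bigwedge^\bullet E
\]
as supercommutative algebras. Fix a basis $\{e_i\}$ of $E$; under this identification the embedding $\bigwedge^2 E \hookrightarrow S^2(E\otimes U)$ sends $e_i \wedge e_j$ to
\[
  f_{ij} = q_{ij} + \alpha\,(e_i \wedge e_j),
\]
where $q_{ij} \in S^2(E \otimes U_0)$ comes from $\omega|_{\bigwedge^2 U_0}$, $\alpha \in \bC^\times$ comes from the nondegenerate form $\omega|_{S^2 U_1}$, and the second summand lies in the exterior tensor factor. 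Acyclicity of $\bK_{E,U}$ is equivalent to $\{f_{ij}\}_{i<j}$ being a regular sequence in $S^\bullet(E \otimes U)$, so that is what I will prove.

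My plan is a two-step reduction. First, I equip the ring with the $\bZ$-grading in which $E \otimes U_0$ has weight $1$ and $E \otimes U_1$ has weight $0$: then $q_{ij}$ has weight $2$, $e_i \wedge e_j$ has weight $0$, and the top-weight component of $f_{ij}$ is $q_{ij}$. By the standard associated-graded lemma for Koszul complexes (regularity of the top-degree terms on $\gr R$ implies regularity of the original sequence on $R$), it will suffice to show that $\{q_{ij}\}_{i<j}$ is a regular sequence in $S^\bullet(E \otimes U_0) \otimes \bigwedge^\bullet E$. Since $\bigwedge^\bullet E$ is flat (in fact free of finite rank) over $\bC$ and the $q_{ij}$ lie entirely in the first tensor factor, this in turn is equivalent to regularity of $\{q_{ij}\}_{i<j}$ in $S^\bullet(E \otimes U_0)$.

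Second, I will invoke a classical invariant-theoretic computation. The zero locus of $\{q_{ij}\}$ in $\Spec S^\bullet(E\otimes U_0)$ is the affine variety of $\dim E$-tuples of vectors in $U_0$ that are pairwise symplectically orthogonal. When $\dim E \le m$, a generic such tuple spans a $\dim E$-dimensional isotropic subspace, so the variety has codimension exactly $\binom{\dim E}{2}$, matching the number of equations. Since $S^\bullet(E \otimes U_0)$ is Cohen--Macaulay, any ideal whose height equals the number of generators is generated by a regular sequence, so the $q_{ij}$ form a regular sequence. Combining with the two reductions above yields acyclicity of $\bK_{E,U}$.

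Given this acyclicity, $\Tor^{S^\bullet(\bigwedge^2 E)}_{>0}(S^\bullet(E \otimes U), \bC) = 0$, so $S^\bullet(E \otimes U)$ is a flat, and hence (in the graded setting) free, $S^\bullet(\bigwedge^2 E)$-module. A $\GL(E) \times \Sp\bO(U)$-equivariant homogeneous basis exists by complete reducibility in each graded degree, yielding the asserted splitting. Finally, each $L^\lambda_\bullet(U)$ with $\ell(\lambda) \le m$ appears as a direct summand of $\bK_{E,U}$ once we choose $\dim E = m$, so it inherits acyclicity. The one piece requiring real care is the codimension claim for the isotropic-tuple variety (equivalently, that generic isotropic tuples of size $\le m$ span full-dimensional isotropic subspaces of $U_0$), but this is standard stable-range invariant theory.
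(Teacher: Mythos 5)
Your proof is correct and follows essentially the same route as the paper: both arguments degenerate the generating set so that the odd part of each generator drops out, reducing to the acyclicity of the Koszul complex on the even-part generators $q_{ij}$ in $S^\bullet(E\otimes U_0)$. The paper phrases the degeneration as a one-parameter family scaling the odd variables by $t$ and letting $t\to 0$; your ``top-weight part'' reformulation under the grading with $\mathrm{wt}(E\otimes U_0)=1$, $\mathrm{wt}(E\otimes U_1)=0$ is the same deformation in disguise (and that is also the cleanest way to justify the step, since in your auxiliary grading the degree-zero part is $\bigwedge^\bullet E$, not a field, so the naive ``associated-graded regular $\Rightarrow$ regular'' lemma for local/connected-graded rings does not literally apply --- the flat one-parameter family and semicontinuity of Koszul homology is the correct argument, and it is what the paper invokes via \cite{bwfact}). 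The one genuine difference is that the paper delegates the complete-intersection claim for $\{q_{ij}\}$ to \cite[Lemma 3.3]{sam2013homology}, whereas you supply a direct codimension count. Your count correctly identifies the top stratum of the pairwise-isotropic-tuple variety as having codimension $\binom{n}{2}$; to fully close the argument one must also observe that the lower-rank strata do not dominate (a routine but necessary check, since $\mathbb{C}[\Hom(E,U_0)]$ being Cohen--Macaulay requires knowing the \emph{total} codimension of $V(q_{ij})$ equals the number of equations), which you flag as ``requiring real care'' but do not actually carry out. That is the only soft spot; otherwise the two proofs coincide.
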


As we will see in Corollary~\ref{cor:adm-acyclic}, $L^\lambda_\bullet(U)$ remains acyclic under the weaker assumption $\lambda_1^T + \lambda_2^T \le 2m+1$.

\begin{proof}
  Let $I$ be the ideal generated by $\bigwedge^2 E$. We consider a 1-parameter family where all odd variables in $S^\bullet(E \otimes U)$ are scaled by $t$. The $t=0$ limit of $I$ contains the ideal generated by the copy of $\bigwedge^2 E$ induced by the inclusion
  \[
    \bigwedge^2 E \subset \bigwedge^2 E \otimes \bigwedge^2 U_0 \subset S^2(E \otimes U_0).
  \]
  This ideal (in $S^\bullet(E \otimes U_0)$) is known to be a complete intersection (for example, by \cite[Lemma 3.3]{sam2013homology}), which implies the result (see for instance, the proof of \cite[Proposition 4.1]{bwfact}).
\end{proof}

We define
\[
  \bS_{[\lambda]}(U) = \rH_0(L^\lambda_\bullet(U)).
\]
When $\ell(\lambda) \le m$, the previous result combined with the plethysms of Proposition~\ref{prop:plethysms} gives the following two character identities:
\begin{subequations}
  \begin{align} 
  [\bS_{[\lambda]}(U)] &= [L^\lambda_\bullet(U)] = \sum_{\mu \in Q_{-1}} (-1)^{|\mu|/2} [\bS_{\lambda / \mu} (U)], \label{eqn:finite-1} \\
  [\bS_{\lambda}(U)] &= \sum_{\mu, \nu} c^{\lambda}_{\mu, (2\nu)^T} [\bS_{[\mu]}(U)]. \label{eqn:finite-2}
\end{align}
\end{subequations}

Let $[L^\lambda_\bullet(U)]$ denote the equivariant Euler characteristic of $L^\lambda_\bullet(U)$. For general $\lambda$ we have the following determinantal formula.

\begin{proposition} \label{prop:euler-det}
  Let $h_i = [S^i(U)]$ and pick $r \ge \ell(\lambda)$. Then
  \[
    [L^{\lambda}_\bullet(U)] = \frac12 \det(h_{\lambda_i - i + j} + h_{\lambda_i - i - j + 2})_{i,j=1}^r.
  \]
\end{proposition}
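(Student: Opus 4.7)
The plan is to reduce the identity to a purely combinatorial statement in the $h_i$'s, and then verify it by expanding the right-hand side and matching terms bijectively. From the description of the terms $L^\lambda_i(U) = \bigoplus_{\mu \in Q_{-1},\,|\mu|=2i}\bS_{\lambda/\mu}(U)$, the equivariant Euler characteristic is $\sum_{\mu \in Q_{-1}}(-1)^{|\mu|/2}[\bS_{\lambda/\mu}(U)]$, and the classical skew Jacobi--Trudi identity---being a formal relation in the $h_i$'s, hence valid in any character ring---gives $[\bS_{\lambda/\mu}(U)] = \det(h_{\lambda_i - \mu_j - i + j})_{i,j=1}^r$. Thus the proposition reduces to the purely combinatorial identity
\[
\sum_{\mu \in Q_{-1}}(-1)^{|\mu|/2}\det(h_{\lambda_i - \mu_j - i + j})_{i,j=1}^r = \tfrac{1}{2}\det(h_{\lambda_i - i + j} + h_{\lambda_i - i - j + 2})_{i,j=1}^r.
\]

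To prove this I would expand the right-hand side by multilinearity on columns, producing $2^r$ signed determinants indexed by a choice of ``standard'' entry $h_{\lambda_i - i + j}$ or ``reflected'' entry $h_{\lambda_i - i - j + 2}$ in each column. Column~$1$ yields the same entry $h_{\lambda_i - i + 1}$ in both cases, so the prefactor $\tfrac{1}{2}$ exactly absorbs the doubling from this column and the right-hand side simplifies to $\sum_{T \subseteq \{2,\dots,r\}}\det(h_{\lambda_i - i + k_j^T})_{i,j=1}^r$, where $k_j^T = j$ for $j \notin T$ and $k_j^T = 2-j$ for $j \in T$. For $T = \{j_1 < \cdots < j_d\}$ the values $k_j^T$ are distinct integers (the reflected ones are $\le 0$, the standard ones are $\ge 1$); sorting them into increasing order incurs a sign $(-1)^{\sum_\ell j_\ell - d}$ by a direct inversion count, and by Jacobi--Trudi the sorted determinant equals $s_{\lambda/\mu(T)}$, where $\mu(T) \in Q_{-1}^d$ has Frobenius coordinates $(j_d - 2,\dots,j_1-2 \mid j_d - 1,\dots,j_1 - 1)$.

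The assignment $T \mapsto \mu(T)$ is a bijection between subsets of $\{2,\dots,r\}$ and $\{\mu \in Q_{-1} : \ell(\mu) \le r\}$---its inverse reads $j_\ell = a_{d-\ell+1}+2$ off the Frobenius coordinates of $\mu$---and the Frobenius formula $|\mu|/2 = \sum_\ell a_\ell + d$ yields $|\mu(T)|/2 = \sum_\ell j_\ell - d$, so the sorting sign matches $(-1)^{|\mu(T)|/2}$ exactly. Summing over $T$ then reproduces the left-hand side term by term. The main technical obstacle will be the sign/bijection bookkeeping: correctly counting inversions in the sorting permutation and confirming that every $\mu \in Q_{-1}$ with $\ell(\mu) \le r$ is hit exactly once; these are routine once the column-expansion framework is set up, but need care to translate cleanly between the Frobenius description of $\mu$ and the subset $T$.
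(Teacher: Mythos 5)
Your proposal follows essentially the same route as the paper's proof: reduce the Euler characteristic to the alternating sum $\sum_{\mu \in Q_{-1}}(-1)^{|\mu|/2}[\bS_{\lambda/\mu}(U)]$ via skew Jacobi--Trudi, expand the right-hand determinant by multilinearity on columns (absorbing the $\tfrac12$ with column $1$), and match the $2^{r-1}$ signed determinants indexed by $T \subseteq \{2,\dots,r\}$ bijectively to partitions $\mu \in Q_{-1}$ with $\ell(\mu) \le r$ via Frobenius coordinates. The only meaningful difference is in establishing the sign: the paper shows the sorting permutation has length $|\mu|/2$ by an inductive argument with the dotted action, whereas you compute the inversion number of the shift vector directly (getting $\binom{d}{2} + \sum_\ell(j_\ell - \ell) = \sum_\ell j_\ell - d$), which is arguably slightly more elementary but accomplishes the same thing. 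Your identity $|\mu(T)|/2 = \sum_\ell a_\ell + d = \sum_\ell j_\ell - d$ and the bijection $T \leftrightarrow \mu$ (with $\ell(\mu) = j_d \le r$) both check out, so the proof is correct.
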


\begin{proof}
  When $j=1$, we have $h_{\lambda_i-i+j}=h_{\lambda_i-i-j+2}$, so we can instead rewrite the determinant so that the first column is simply $h_{\lambda_i-i+1}$ and omit the $\frac12$ factor. Hence by multilinearity, we can expand the determinant as a sum of $2^{r-1}$ determinants: 
  $$\frac12 \det(h_{\lambda_i - i + j} + h_{\lambda_i - i - j + 2})_{i,j=1}^r = \sum_{I \subseteq \{ 2 , \dots , r \} } \det ( \bh | M(I)),$$
  where $\bh = \begin{pmatrix} h_{\lambda_1} \\ h_{\lambda_2 - 1} \\ \vdots \\ h_{\lambda_r - r+1} \end{pmatrix}$ and $M(I)$ is an $r \times (r-1)$ matrix with entries (the columns are indexed by $2,\dots,r$):
  \[
    M(I)_{i,j} = \begin{cases}
      h_{\lambda_i - i +j} & \text{if} \ j \notin I, \\
      h_{\lambda_i - i - j + 2} & \text{if} \ j \in I.
    \end{cases}
  \]
  For a fixed indexing set $I = \{ j_1 , \dots , j_k \} \subseteq \{ 2 , \dots , r \}$ define the vector $J=(J_1,\dots,J_r)$ with
  \[
    J_i = \begin{cases} 0 & \text{if $i \notin I$} \\ 2i-2 & \text{else}. \end{cases}
  \]
  Define the partition $\mu \in Q_{-1}$ whose Frobenius coordinates are given by
  \[
    \mu =  (j_k-2, j_{k-1}-2 , \dots , j_1-2 \mid j_k-1 , j_{k-1}-1, \dots , j_1-1).
  \]
  In this proof we will use the dotted action of the symmetric group on $\bZ^r$ by $\sigma \bullet \alpha = \sigma(\alpha + \rho) - \rho$ where $\rho = (r-1,r-2,\dots,1,0)$.
  
  We claim that there exists a permutation $\sigma$ of $\{1,\dots,j_k\}$ with $\ell(\sigma)=|\mu|/2$ such that $\sigma \bullet J = \mu$. We argue by induction on $|I|$: when $|I| = 0$, there is nothing to show. For $|I| > 0$ write $J = (J', 2 j_k-2, 0^{r-1-\ell(J')})$ for some vector $J'$ (for simplicity we will ignore the tail of $0$'s on $J$); the cycle permutation $w' = (12 \dots j_k)$ has length $j_k-1$ and $w' \bullet J = (j_k-1 ,J') + (0, 1^{j_k-1})$. By induction, there exists a permutation $w''$ of $\{1,\dots,j_{k-1}\}$ such that $w'' \bullet J' = \mu'$ where $\mu'$ has Frobenius coordinates $(j_{k-1}-2, \dots , j_1-2 \mid j_{k-1}-1 , \dots , j_1-1)$.   Let $w'''$ be the same permutation of $\{2,\dots,j_{k-1}+1\}$ obtained by increasing the value of every entry by 1.

  Set $\sigma = w''' \cdot w'$, which is a permutation of $\{1,\dots,j_k\}$. Then $\sigma \bullet J = (j_k-1 , \mu' + 1^{j_k-1}) = \mu$, and moreover
  \[
    \ell(\sigma) = \ell(w') + \ell(w''') = j_k-1 + |\mu'|/2 = |\mu|/2.
  \]

  Finally, $\sigma$ reorders the vector $J$ into descending order and thus by applying $\sigma$ to the columns of $(\bh|M(I))$, we get
  \[
    \det (\bh | M(I)) = (-1)^{|\mu|/2} \det ( h_{\lambda_i - \mu_j - i +j} )_{i,j=1}^r
  \]
  The determinant on the right side computes $(-1)^{|\mu|/2} [ \bS_{\lambda / \mu} (U)]$ by the Jacobi--Trudi identity \cite[\S 7.16]{stanley}. Taking the sum over each $J \subseteq \{ 2 , \dots , r \}$ will range through all $\mu \in Q_{-1}$, in which case the desired character equality follows. 
\end{proof}

\begin{corollary} \label{cor:char-id}
  If $\ell(\lambda) \le m$, then $\bS_{[\lambda]}(U)$ is an irreducible $\Sp\bO(U)$-representation of highest weight $\lambda$.
\end{corollary}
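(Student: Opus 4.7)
The plan is to combine the acyclicity of $L^\lambda_\bullet(U)$ established in Proposition~\ref{prop:superRegSeq} with the determinantal character formula of Proposition~\ref{prop:euler-det}, and then identify the resulting character with a known irreducible $\Sp\bO(U)$-character via the character ring isomorphism $\rK(\Sp\bO(2m|1)) \cong \rK(\bO(2m+1))$ advertised in the introduction.

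Since $\ell(\lambda) \le m$, Proposition~\ref{prop:superRegSeq} implies $L^\lambda_\bullet(U)$ is acyclic, so $[\bS_{[\lambda]}(U)] = [L^\lambda_\bullet(U)]$, which by Proposition~\ref{prop:euler-det} equals
\[
  \frac12 \det(h_{\lambda_i - i + j} + h_{\lambda_i - i - j + 2})_{i,j=1}^r,
\]
with $h_i = [S^i(U)]$. Each term $L_j^\lambda(U) = \bigoplus_{\mu} \bS_{\lambda/\mu}(U)$ is a finite direct sum of Schur functors applied to the finite-dimensional superspace $U$, so $\bS_{[\lambda]}(U)$ is finite-dimensional as an $\Sp\bO(U)$-module; by complete reducibility of the finite-dimensional $\Sp\bO(2m|1)$-module category it splits as a direct sum of irreducibles.

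Next I would transport the above character through the isomorphism $\rK(\Sp\bO(2m|1)) \cong \rK(\bO(2m+1))$: the classes $h_i = [S^i(U)]$ correspond to the characters of the symmetric powers of the defining $\bO(2m+1)$-representation. The displayed determinant is then precisely the classical $\bO$-type Jacobi--Trudi (Koike--Terada) formula for the character of the irreducible $\bO(2m+1)$-representation of highest weight $\lambda$, which is valid in the stable range $\ell(\lambda) \le m$. Since the character ring isomorphism pairs irreducible characters with irreducible characters, $[\bS_{[\lambda]}(U)]$ is itself an irreducible $\Sp\bO(U)$-character, and together with semisimplicity this forces $\bS_{[\lambda]}(U)$ to be irreducible. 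Finally, the highest weight must be $\lambda$ because $\bS_\lambda(U)$ occurs as the $\mu = \varnothing$ summand of $L_0^\lambda(U)$ and contains a vector of weight $\lambda$, which is dominant and strictly maximal among the weights that appear in any $\bS_{\lambda/\mu}(U)$ with $\mu \in Q_{-1}$ nonempty.

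The main obstacle is verifying that the determinantal identity of Proposition~\ref{prop:euler-det} is matched, under the character ring isomorphism, with the classical $\bO(2m+1)$ Jacobi--Trudi formula, and that this matching respects the labeling by highest weight. This is essentially a combinatorial identity plus careful bookkeeping of sign conventions and of how $\bS_\lambda(U)$ detects the highest-weight line; a cleaner alternative that bypasses a detailed appeal to the character ring isomorphism is to apply the classical $\bO$-type Jacobi--Trudi formula directly to expand the determinant as a $\bZ$-linear combination of $\bO(2m+1)$-irreducible characters and use the linear independence of these characters in the stable range to pin down the decomposition as a single irreducible.
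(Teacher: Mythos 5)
Your outline is close in spirit to the paper's proof, but two concrete errors in the "transport" step mean the argument as written would not go through, and they are precisely where all the work happens.

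First, the dictionary under the character ring isomorphism is not $[S^i(U)] \leftrightarrow [S^i(\bC^{2m+1})]$. The correct correspondence, spelled out in the paper just after this corollary, is $[S^d(U)] \leftrightarrow [S^d(\bC^{2m+1})] - [S^{d-2}(\bC^{2m+1})]$. Equivalently, writing $h'_d$ for the $d$th complete homogeneous symmetric function in $x_1,\dots,x_m,x_1^{-1},\dots,x_m^{-1}$, one has $[S^d(U)] = h'_d + h'_{d-1}$ while $[S^d(\bC^{2m+1})] = \sum_{i=0}^d h'_i$. Under your claimed identification the determinant from Proposition~\ref{prop:euler-det} would not give the Koike--Terada character.

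Second, even with the correct dictionary, the two determinants are visibly different and not equal term by term: Proposition~\ref{prop:euler-det} gives $\frac12 \det(h_{\lambda_i-i+j}+h_{\lambda_i-i-j+2})$ with $h_i=[S^i(U)]$, while Koike--Terada gives $\det([S^{\lambda_i-i+j}(\bC^{2m+1})]-[S^{\lambda_i-i-j}(\bC^{2m+1})])$: the sign between the two summands, the shift $-j$ versus $-j+2$, and the $\frac12$ factor all differ. Reconciling them requires an actual computation -- the paper first rewrites both entries in terms of $h'_d$ and then performs a sequence of column operations (adding column $i$ to column $i+2$ for $i \le m-2$) to transform one determinant into the other. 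Your proposal flags this as "combinatorial identity plus careful bookkeeping" but that bookkeeping is the content of the proof, and the naive identification of $h_i$ with $[S^i(\bC^{2m+1})]$ actively blocks it. Relatedly, the paper's proof also establishes (via the Weyl character formula) that the $\spo(2m|1)$ and $\so(2m+1)$ irreducible characters with the same highest weight $\lambda$ agree; this is what pins down the labeling, rather than invoking the character ring isomorphism as a black box, so the corollary is in part what justifies the clean dictionary you want to appeal to. Your highest-weight argument via the $\mu=\varnothing$ summand of $L_0^\lambda(U)$ is fine as a supplement, but it does not substitute for the missing determinant manipulation.
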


As will follow from Remark~\ref{rmk:admissible}, the above assumption can be relaxed to $\lambda_1^T + \lambda_2^T \le 2m+1$.

\begin{proof}
    First, the character of the irreducible $\spo(2m|1)$-representation of highest weight $\lambda$ is computed by the Weyl character formula (see \cite[Theorem 2.35]{cheng2012dualities}) where $W$ is the type $\rB\rC_m$ Weyl group:
    \begingroup\allowdisplaybreaks
    \begin{align*}
        \frac{\prod_{i=1}^m (1 + x_i^{-1}) \cdot \left( \sum_{w \in W} (-1)^{\ell (w)} x^{w \bullet \lambda} \right)}{\prod_{i=1}^m (1+x_i^{-2}) \cdot \prod_{1 \leq i < j \leq m} x_i^{-1} x_j^{-1} (x_i + x_i^{-1} - x_j - x_j^{-1})} \\
        = \frac{  \sum_{w \in W} (-1)^{\ell (w)} x^{w \bullet \lambda}}{\prod_{i=1}^m (1+x_i^{-1}) \cdot \prod_{1 \leq i < j \leq m} x_i^{-1} x_j^{-1} (x_i + x_i^{-1} - x_j - x_j^{-1})}. 
    \end{align*}
    \endgroup
    This last expression is the Weyl character formula for the character of the irreducible $\so(2m+1)$-representation of highest weight $\lambda$ \cite[\S 24.1]{fulton2013representation}. 

    Second, by \cite[Theorem 1.3.2]{koiketerada}, the character of the irreducible $\so(2m+1)$-representation of highest weight $\lambda$ is given by
    \[
      \det([S^{\lambda_i - i + j}(\bC^{2m+1})] - [S^{\lambda_i - i - j}(\bC^{2m+1})])_{i,j=1}^m.
    \]
    where $[S^d(\bC^{2m+1})]$ is the sum of all $d$-fold products of elements in the set
    \[
      \{x_1,\dots,x_m,x_1^{-1},\dots,x_m^{-1},1\}.
    \]
    Define $h'_d$ to be the sum of the $d$-fold products of the elements in  $\{x_1,\dots,x_m,x_1^{-1},\dots,x_m^{-1}\}$, which we can think of as $[S^d(\bC^{2m})]$. Then $[S^d(\bC^{2m+1})] = \sum_{i=0}^d h'_i$. On the other hand, using \eqref{eqn:finite-1} and Proposition~\ref{prop:euler-det}, we have
    \[
      [\bS_{[\lambda]}(U)] = \frac12 \det ( [S^{\lambda_i-i+j}(U)] + [S^{\lambda_i-i-j+2}(U)])_{i,j=1}^m.
    \]
    Also, we have $[S^d(U)] = h'_d + h'_{d-1}$. Now we can transform the second determinant into the first via column operations: for each $i \le m-2$, add column $i$ to column $i+2$ (starting from $i=1$ and going in order). In particular, the characters agree, so that $[\bS_{[\lambda]}(U)]$ is the character of an irreducible representation, which means that $\bS_{[\lambda]}(U)$ is itself irreducible.
  \end{proof}

  \begin{remark} \label{rmk:same-euler}
    The second determinant in the above proof can be identified with the Euler characteristic of the Littlewood complex for $\bO(2m+1)$, as defined in \cite[\S 4.2]{sam2013homology}. The proof is similar to the proof for Proposition~\ref{prop:euler-det}. So the Littlewood complexes for $\Sp\bO(2m|1)$ and $\bO(2m+1)$ have the same Euler characteristic (when indexed by the same partition $\lambda$) without any restriction on $\ell(\lambda)$.
  \end{remark}
  
  Finally, we translate these results to characters of the stable limit $\Sp\bO(\bU)$ where $\bU=\bC^{2\infty|1}$ in the sense of \cite{sam2017infinite}. The ring $\rK(\Sp\bO(\bU))$ is the Grothendieck group of a suitable category of tensor representations which is introduced there. For our purposes, it is enough to take $\rK(\Sp\bO(\bU))$ to be the ring of symmetric functions (which we will identify with the character ring of $\GL(\bU)$).

  Since the identities \eqref{eqn:finite-1} are valid whenever $m \ge \ell(\lambda)$, we define
\begin{subequations}
  \begin{align} 
  [\bS_{[\lambda]}(\bU)] &= \sum_{\mu \in Q_{-1}} (-1)^{|\mu|/2} [\bS_{\lambda / \mu} (\bU)]. \label{eqn:infinite-1}
  \end{align}
Now using the identities \eqref{eqn:finite-2}, we conclude that
\begin{align}
  [\bS_{\lambda}(\bU)] &= \sum_{\mu, \nu} c^{\lambda}_{\mu, (2\nu)^T} [\bS_{[\mu]}(\bU)]. \label{eqn:infinite-2}
\end{align}
\end{subequations}
Note that if $c^\lambda_{\mu,(2\nu)^T} \ne 0$, then $|\mu|\le |\lambda|$, and when $\lambda=\mu$, this coefficient is 1 for $\nu=\emptyset$ and is 0 otherwise. It follows that the set of $[\bS_{[\lambda]}(\bU)]$ forms a basis for $\rK(\Sp\bO(\bU))$. Furthermore, the classes $[S^d(\bU)]$, for $d>0$, form an algebraically independent set of ring generators (this is just a statement about the complete homogeneous symmetric functions).

We will also need to make use of the stable limit $\bO(\infty)$ as studied in \cite{stability-patterns}.

Define a ring homomorphism from $\rK(\Sp\bO(\bU))$ to the character ring $\rK(\bO(\infty))$ which sends $[S^d(\bU)]$ to $[S^d(\bC^{\infty})] - [S^{d-2}(\bC^{\infty})]$. From the proof of Corollary~\ref{cor:char-id}, we deduce that $[\bS_{[\lambda]}(\bU)] \mapsto [\bS_{[\lambda]}(\bC^{\infty})]$ where the latter is the character of the corresponding irreducible representation of $\bO(\infty)$.

Furthermore, we have specialization maps
\[
  \pi_{\Sp\bO(2m|1)} \colon \rK(\Sp\bO(\bU)) \to \rK(\Sp\bO(2m|1)), \qquad [S^d(\bU)] \mapsto [S^d(\bC^{2m|1})]
\]
and
\[
  \pi_{\bO(2m+1)} \colon \rK(\bO(\infty)) \to \rK(\bO(2m+1)), \qquad [S^d(\bC^{\infty})] \mapsto [S^d(\bC^{2m+1})].
\]

\begin{proposition} \label{prop:comm-square}
  With the notation above, we have a commutative square
  \[
    \xymatrix{ \rK(\Sp\bO(2\infty|1)) \ar[r]^-\cong \ar[d]_-{\pi_{\Sp\bO(2m|1)}} & \rK(\bO(\infty)) \ar[d]^-{\pi_{\bO(2m+1)}} \\
      \rK(\Sp\bO(2m|1)) \ar[r]^-\cong & \rK(\bO(2m+1)) }
  \]
  which preserves indexing of irreducible representations by partitions.
\end{proposition}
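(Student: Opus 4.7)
The plan is to verify commutativity on the algebraically independent ring generators $\{[S^d(\bU)]\}_{d\ge 1}$ of $\rK(\Sp\bO(\bU))$; since all four arrows are ring homomorphisms, this will suffice to give commutativity throughout the square. It is also worth noting at the outset that the top map is indeed an isomorphism, because $\{[S^d(\bC^\infty)]\}_{d\ge 1}$ freely generates $\rK(\bO(\infty))$ and the substitution $[S^d(\bC^\infty)] \mapsto [S^d(\bC^\infty)] - [S^{d-2}(\bC^\infty)]$ is upper-triangular in $d$ and hence invertible; the bottom map is an isomorphism by Corollary~\ref{cor:char-id}.

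For commutativity on $[S^d(\bU)]$, I would compute both composites directly, using the notation $h'_i \coloneq [S^i(\bC^{2m})]$ from the proof of Corollary~\ref{cor:char-id}. The right-then-down composite yields
\[
  [S^d(\bU)] \longmapsto [S^d(\bC^\infty)] - [S^{d-2}(\bC^\infty)] \longmapsto [S^d(\bC^{2m+1})] - [S^{d-2}(\bC^{2m+1})] = h'_d + h'_{d-1},
\]
where the final equality is the telescoping $[S^d(\bC^{2m+1})] = \sum_{i=0}^d h'_i$ recorded in that proof. The down-then-right composite sends $[S^d(\bU)] \mapsto [S^d(\bC^{2m|1})]$, and the decomposition $S^d(\bC^{2m|1}) \cong S^d(\bC^{2m}) \oplus S^{d-1}(\bC^{2m})$ (since the exterior algebra on the one-dimensional odd part has only two nonzero graded pieces) gives $[S^d(\bC^{2m|1})] = h'_d + h'_{d-1}$ in the same torus variables. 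Under the bottom isomorphism, which matches the two character rings via their shared expression as symmetric functions in $x_1^{\pm1}, \dots, x_m^{\pm1}$, both composites produce the element $h'_d+h'_{d-1}$, so the square commutes on each generator.

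The preservation of indexing of irreducibles is already built into the horizontal maps: the top map sends $[\bS_{[\lambda]}(\bU)] \mapsto [\bS_{[\lambda]}(\bC^\infty)]$ (noted in the text preceding the proposition, a consequence of the proof of Corollary~\ref{cor:char-id}), while the bottom map sends $[\bS_{[\lambda]}(\bC^{2m|1})] \mapsto [\bS_{[\lambda]}(\bC^{2m+1})]$ by Corollary~\ref{cor:char-id}. I do not anticipate a substantive obstacle; the argument reduces to the short telescoping calculation on ring generators together with the indexing statements already recorded in earlier results.
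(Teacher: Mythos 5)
Your proposal is correct and follows essentially the same approach as the paper: the paper's proof is simply the observation that it suffices to check commutativity on the ring generators $[S^d(\bU)]$, and you have carried out that check explicitly (via the telescoping identity $[S^d(\bC^{2m+1})] - [S^{d-2}(\bC^{2m+1})] = h'_d + h'_{d-1} = [S^d(\bC^{2m|1})]$, which the paper leaves implicit) together with the indexing statements already recorded before the proposition.
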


\begin{proof}
  This follows from the above discussion since the elements $[S^d(\bU)]$ are ring generators for $\rK(\Sp\bO(2\infty|1))$.
\end{proof}

\begin{definition}
  Fix $m$.   A partition $\lambda$ is \defi{admissible} if $\lambda_1^T + \lambda_2^T \leq 2m+1$. In that case, we can define a partition $\lambda^\sigma$ by replacing the first column length of $\lambda$ with $2m +1 - \lambda_1^T$. Then $\lambda^\sigma$ is also admissible and exactly one of $\lambda$ and $\lambda^\sigma$ has length at most $m$; this particular partition will be denoted $\ol{\lambda}$.
\end{definition}

\begin{remark} \label{rmk:admissible}
  For $\bO(2m+1)$, we have $\bS_{[\lambda]}(\bC^{2m+1}) \cong \det \otimes \bS_{[\lambda^\sigma]}(\bC^{2m+1})$ where $\det$ is the determinant character \cite[Exercise 19.23]{fulton2013representation}. In particular, the determinant character is $\bS_{[1^{2m+1}]}(\bC^{2m+1})$. From the explicit description of the Littlewood complex, we have that $\bS_{[1^{2m+1}]}(\bC^{2m|1})$ is the determinant character of $\Sp\bO(2m|1)$ (specifically, this can be thought of as the nontrivial character of the even subgroup $\bO(1) \cong \bZ/2$).

  Hence from the isomorphism above, if $\lambda$ is admissible, we conclude that $\bS_{[\lambda]}(\bC^{2m|1}) \cong \det \otimes \bS_{[\lambda^\sigma]}(\bC^{2m|1})$ and that the Euler characteristic of $L^\lambda_\bullet(U)$ is $[\bS_\lambda(U)]$ (we will see in Corollary~\ref{cor:adm-acyclic} that $L^\lambda_\bullet(U)$ is acyclic).
\end{remark}

\begin{remark}\label{rk:oddSymplecticRk}
     While the even-orthogonal (type D) quadric hypersurface ring may not be equivariantly positive, there is a similar character correspondence that follows from work of Proctor \cite{proctor1988odd}. More precisely, let $\Sp \bO (2m+1 | 1)$ denote the odd orthosymplectic supergroup, which contains only an $m$-dimensional torus. Then there is an isomorphism of character rings
    $${\rm K} (\Sp \bO (2m+1 |1) ) \cong {\rm K} (\bO (2m+2))|_{x_{m+1} = 1},$$
    where the notation ${\rm K} (\bO (2m+2))|_{x_{m+1} = 1}$ denotes the character ring ${\rm K} (\bO (2m+2))$ but with the last variable $x_{m+1}$ set equal to $1$. Under this isomorphism, the virtual character of the type D quadric Schur module $[\bS^A_\lambda]$ (after setting the last variable equal to $1$) corresponds to the $\Sp\bO(2m+1|1)$ character of the classical Schur module $\bS_\lambda (\bC^{2m+1|1})$.  Thus the even (type D) quadric hypersurface ring is totally positive only after restricting to a torus of submaximal dimension in $\bO (2m+2)$. 
\end{remark}

\subsection{Modification rules}

Given the commutative diagram in Proposition~\ref{prop:comm-square}, we can use the existing combinatorial rules from \cite[\S 4.4]{sam2013homology} for computing $\pi_{\bO(2m+1)}$ to compute $\pi_{\Sp\bO(2m|1)}$. 
In this section, we recall two of these rules, known as ``modification rules''.

Throughout this section, we fix $m$ to be any nonnegative integer. We continue to use the notion of admissible partition.

\begin{construction}[Modification rule -- Weyl group version] 
  Let $\cU$ be the set of integer sequences $(a_1, a_2,\dots)$. For $i>0$, let $s_i$ be the involution on $\cU$ that swaps $a_i$ and $a_{i+1}$. Let $s_0$ be the involution that swaps $a_1$ and $a_2$ and negates them both. We let $W$ be the group generated by the $s_i$, for $i \ge 0$.  Then $W$ is a Coxeter group of type $\rD_{\infty}$, so it is equipped with a length function $\ell \colon W \to \bZ_{\ge 0}$ with respect to the generators $s_0,s_1,\dots$, i.e., $\ell(w) = d$ if $w$ can be written as a product of $d$ of the $s_i$ (possibly with repetition), but not as a product of less than $d$ of them.

  Let $\rho = -\frac12( 2m+1, 2m+3, 2m+5,\dots)$. Define a dotted action of $W$ on $\cU$ by $w \bullet \lambda=w(\lambda+\rho)-\rho$. Given a partition $\mu$, we interpret it as an element of $\cU$ in the evident way: $(\mu_1, \mu_2, \dots)$ (padded with infinitely many 0's at the end). Then exactly one of the following two possibilities hold:
\begin{compactitem}
\item There exists a unique element $w \in W$ such that $w \bullet \mu^T$ is a partition (which we denote $\lambda^T$) such that $\lambda_1^T+\lambda_2^T \le 2m+1$ (i.e., $\lambda$ is admissible). We then put $i_{2m|1}(\mu)=\ell(w)$ and $\tau_{2m|1}(\mu)= \lambda$.
\item There exists a non-identity element $w \in W$ such that $w \bullet \mu^T=\mu^T$.  We then put $i_{2m|1}(\mu)=\infty$ and leave $\tau_{2m|1}(\mu)$ undefined. \qedhere
\end{compactitem}
\end{construction}

The above procedure may be equivalently described via a rule that involves iteratively removing border strips of certain sizes. A \defi{border strip} is a connected skew shape (two boxes are considered to be in the same connected component if they share an edge) that does not contain any $2 \times 2$ subdiagram.

\begin{construction}[Modification rule -- border strip version] 
    Let $\mu$ be a partition and write $\mu^T = (\mu_1' , \dots , \mu_n')$. We describe a modification rule to obtain the admissible partition $\lambda$ satisfying $w \bullet (\lambda^T) = \mu^T$, if such a $\lambda$ exists. We first introduce an auxiliary function $\tau'_{2m|1}$.
    \begin{enumerate}
    \item If $\mu$ is admissible, then $i_{2m|1} (\mu) \coloneq 0$ and $\tau'_{2m|1} (\mu) \coloneq \mu$.
      
        \item If $\mu$ is not admissible, then, if possible, remove a border strip $R_\mu$ of length $2 \lambda_1^T - 2m - 1$ from $\mu$ (starting in the first column). If no such border strip exists, set $i_{2m|1} (\mu) = \infty$ and leave $\tau'_{2m|1} (\mu)$ and $\tau_{2m|1}(\mu)$ undefined.  

        \item If the border strip exists, set $i_{2m|1} (\mu) \coloneq c(R_\mu) - 1 + i_{2m|1} (\mu \backslash R_\mu)$ and $\tau'_{2m|1} (\mu) \coloneq \tau'_{2m|1} (\mu \backslash R_\mu)$, where $c (R_\mu)$ denotes the number of columns in the border strip $R_\mu$.  

        \item Perform steps $(1) - (3)$ until either $i_{2m|1}(\mu) = \infty$ or one obtains an admissible partition $\lambda$. Finally, set $\tau_{2m|1}(\mu)=\tau'_{2m|1}(\mu)$ if the total number of border strips removed was even and set $\tau_{2m|1}(\mu) = \tau'_{2m|1}(\mu)^\sigma$ otherwise. \qedhere
    \end{enumerate}
\end{construction}

These two constructions agree by \cite[Proposition 4.3]{sam2013homology}. Now we can describe the specialization map on K-classes
\[
  \pi_{\Sp\bO(2m|1)} \colon \rK(\Sp\bO(\bU)) \to \rK(\Sp\bO(2m|1)).
\]

\begin{lemma} \label{lem:spo-spec}
  We have
  \[
    \pi_{\Sp\bO(2m|1)}([\bS_{[\lambda]}(\bU) ])=
    (-1)^{i_{2m|1} (\lambda)} [\bS_{[\tau_{2m|1} (\lambda)]} (\bC^{2m|1})].
  \]
\end{lemma}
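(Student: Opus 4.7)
The plan is to reduce the lemma to the corresponding modification rule for $\bO(2m+1)$ via the commutative square in Proposition~\ref{prop:comm-square}. Concretely, I will interpret $\tau_{2m|1}$ and $i_{2m|1}$ as being designed precisely so that the modification rule for $\pi_{\bO(2m+1)}$ already proven in \cite[\S 4.4]{sam2013homology} transports over.

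First, I would recall from \cite{sam2013homology} that the analogous specialization map
\[
  \pi_{\bO(2m+1)} \colon \rK(\bO(\infty)) \to \rK(\bO(2m+1))
\]
satisfies
\[
  \pi_{\bO(2m+1)}([\bS_{[\lambda]}(\bC^{\infty})]) = (-1)^{i_{2m|1}(\lambda)}[\bS_{[\tau_{2m|1}(\lambda)]}(\bC^{2m+1})],
\]
since the Weyl-group / border-strip constructions given above are word-for-word the ones from \cite[\S 4.4]{sam2013homology} for the group $\bO(2m+1)$ (the shift $\rho = -\tfrac12(2m+1,2m+3,\dots)$ and the admissibility condition $\lambda_1^T+\lambda_2^T \le 2m+1$ are precisely those appearing there). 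The equivalence of the two constructions is cited as \cite[Proposition 4.3]{sam2013homology}.

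Next, I would chase the element $[\bS_{[\lambda]}(\bU)]$ around the commutative square of Proposition~\ref{prop:comm-square}. By the discussion preceding that proposition (in particular the fact that the ring homomorphism $\rK(\Sp\bO(\bU)) \to \rK(\bO(\infty))$ sending $[S^d(\bU)]$ to $[S^d(\bC^\infty)] - [S^{d-2}(\bC^\infty)]$ carries $[\bS_{[\lambda]}(\bU)]$ to $[\bS_{[\lambda]}(\bC^\infty)]$), the top horizontal isomorphism sends $[\bS_{[\lambda]}(\bU)] \mapsto [\bS_{[\lambda]}(\bC^\infty)]$. Applying $\pi_{\bO(2m+1)}$ gives $(-1)^{i_{2m|1}(\lambda)}[\bS_{[\tau_{2m|1}(\lambda)]}(\bC^{2m+1})]$ by the rule recalled above. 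Finally, the bottom horizontal isomorphism of Proposition~\ref{prop:comm-square} preserves indexing of irreducibles by partitions, so this element corresponds to $(-1)^{i_{2m|1}(\lambda)}[\bS_{[\tau_{2m|1}(\lambda)]}(\bC^{2m|1})]$.

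Commutativity of the square then forces
\[
  \pi_{\Sp\bO(2m|1)}([\bS_{[\lambda]}(\bU)]) = (-1)^{i_{2m|1}(\lambda)}[\bS_{[\tau_{2m|1}(\lambda)]}(\bC^{2m|1})],
\]
which is the desired formula. The only non-routine step is the verification that the bottom horizontal isomorphism really sends $[\bS_{[\nu]}(\bC^{2m+1})] \mapsto [\bS_{[\nu]}(\bC^{2m|1})]$ for every admissible $\nu$ (not just those with $\ell(\nu) \le m$, which are handled by Corollary~\ref{cor:char-id}); this is where Remark~\ref{rmk:admissible} does the work, since for admissible $\nu$ with $\ell(\nu) > m$, one uses $\bS_{[\nu]}(\bC^{2m|1}) \cong \det \otimes \bS_{[\ol{\nu}]}(\bC^{2m|1})$ and the same identity on the orthogonal side to transfer the indexing. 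With that in hand, the proof is a short diagram chase rather than a new combinatorial argument.
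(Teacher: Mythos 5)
Your proof is correct and takes essentially the same route as the paper: the paper's own proof of this lemma is a one-line reduction to Proposition~\ref{prop:comm-square} together with the known formula for $\pi_{\bO(2m+1)}$ in terms of the modification rule. You have simply spelled out the diagram chase, and your observation that Remark~\ref{rmk:admissible} is what extends the bottom isomorphism to admissible $\nu$ with $\ell(\nu) > m$ is the right way to make the terse argument airtight.
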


\begin{proof}
  This follows from Proposition~\ref{prop:comm-square} and the fact that $\pi_{\bO(2m+1)}$ is computed the same way (see \cite[(4.4.6)]{stability-patterns} for the connection).
\end{proof}

The following property will be used later.

\begin{lemma} \label{lem:tor1}
  Let $\lambda$ be an admissible partition. There exists a unique $\mu$ such that $\tau_{2m|1}(\mu)=\lambda$ and $i_{2m|1}(\mu)=1$. Furthermore, $\mu_1^T + \mu_2^T \ge 2m+3$.
\end{lemma}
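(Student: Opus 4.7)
The plan is to reformulate the condition $i_{2m|1}(\mu)=1$ using the Weyl group version of the modification rule: it means that there is a simple reflection $s_i$ (for some $i\ge 0$) with $s_i \bullet \mu^T = \lambda^T$, or equivalently (since each $s_i$ is an involution) $\mu^T = s_i \bullet \lambda^T$. So the strategy is to enumerate the candidates for $\mu$ coming from each simple reflection $s_i$ and determine which of them produce a genuine partition.

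To set things up I would first record the explicit formulas for the dotted action. With $\rho_i = -\tfrac{1}{2}(2m+2i-1)$ one computes $\rho_{i+1}-\rho_i = -1$, so that for $i \ge 1$,
\[
(s_i \bullet \alpha)_i = \alpha_{i+1}-1, \qquad (s_i \bullet \alpha)_{i+1} = \alpha_i + 1,
\]
while
\[
s_0 \bullet \alpha = (2m+2-\alpha_2,\; 2m+2-\alpha_1,\; \alpha_3,\, \alpha_4,\ldots).
\]
For $i \ge 1$ applied to $\alpha = \lambda^T$, the output fails to be weakly decreasing at positions $i,i+1$ since $\lambda_i^T+1 > \lambda_{i+1}^T-1$, so no valid candidate for $\mu^T$ arises from these. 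This leaves only $\mu^T = s_0 \bullet \lambda^T = (2m+2-\lambda_2^T,\;2m+2-\lambda_1^T,\;\lambda_3^T,\ldots)$, and I would confirm that this is a partition by checking $\mu_1^T \ge \mu_2^T$ (i.e., $\lambda_1^T \ge \lambda_2^T$), $\mu_2^T \ge \mu_3^T$ (i.e., $\lambda_1^T + \lambda_3^T \le 2m+2$, which follows from $\lambda_3^T \le \lambda_2^T$ and admissibility), and $\mu_2^T \ge 0$ (i.e., $\lambda_1^T \le 2m+1$, again from admissibility).

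The ``furthermore'' clause is then immediate from the explicit formula: $\mu_1^T + \mu_2^T = 4m+4 - \lambda_1^T - \lambda_2^T \ge 2m+3$. In particular $\mu$ is not admissible, so $i_{2m|1}(\mu)\ne 0$; combined with $s_0 \bullet \mu^T = \lambda^T$ being admissible, the uniqueness clause of the Weyl group construction forces $i_{2m|1}(\mu)=1$ and $\tau_{2m|1}(\mu)=\lambda$. Uniqueness of $\mu$ itself is already contained in the enumeration above, since $s_0$ was the only simple reflection producing a valid partition $\mu^T$. The only real care point in the argument is the bookkeeping of the $\pm 1$ shifts in the dotted action and correctly isolating the exceptional behavior of $s_0$ from that of the $s_i$ with $i\ge 1$; once those formulas are in hand the rest is a mechanical use of the admissibility inequality $\lambda_1^T + \lambda_2^T \le 2m+1$.
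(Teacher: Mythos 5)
Your argument is correct and takes essentially the same route as the paper: both use the Weyl group version of the modification rule to reduce to the identity $\mu^T = s_0 \bullet \lambda^T$, then verify admissibility-based inequalities. The paper's proof is terser and simply asserts that one must have $\mu^T = s_0 \bullet \lambda^T$; you helpfully supply the missing justification that the candidates $s_i \bullet \lambda^T$ for $i \ge 1$ always fail to be weakly decreasing and hence cannot be $\mu^T$ for any genuine partition $\mu$.
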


\begin{proof}
  Using the Weyl group version of the modification rule, we see that we must have $\mu = s_0 \bullet \lambda$, so that $\mu_1^T = 2m+2 - \lambda_2^T$, $\mu_2^T = 2m+2-\lambda_1^T$, and $\mu_i^T=\lambda_i^T$ for all $i \ge 3$. Since $\lambda_1^T + \lambda_2^T \le 2m+1$, we see that $\mu_1^T + \mu_2^T \ge 2m+3$.
\end{proof}

\section{Homology of Littlewood complexes for $\Sp\bO(2m | 1)$} \label{sec:homologyOfLittlewood}

\subsection{Resolving the ideals $\bS_{(2^{2m+2})} (E) \subset S^\bullet \left( \bigwedge^2 E \right)$} \label{ss:ideal}

In this section, we compute the terms of the equivariant minimal free resolution of specific ideals within the ring \( S^\bullet (\bigwedge^2  E) \). These calculations are subsumed by those in the next section, but we have done them separately both as a warmup and because the formulas simplify quite a bit in this case. Leveraging the adaptation of Bott's algorithm for Frobenius coordinates developed in \S\ref{subsec:BWBFrob}, we provide explicit descriptions of these resolutions. This approach emphasizes the natural interplay between the parametrization of weights and the geometry of the associated resolutions in a similar manner to the approach taken in \cite{sam2015littlewood}.

Consider the ideals generated by the subspace $\bS_{(2^{2m+2})} (E) \subset S^{2m+2} \left( \bigwedge^2 E \right)$ for some integer $m \geq 0$. Concretely, thinking of $S^\bullet(\bigwedge^2 E)$ as functions on $n \times n$ skew-symmetric matrices, this is the $\GL(E)$-subrepresentation generated by the determinant (not Pfaffian!) of any principal submatrix of size $2m+2$ (this is not the same thing as the ideal of minors of size $2m+2$): the determinant of the upper left $(2m+2) \times (2m+2)$ submatrix is a highest weight vector of weight $2^{2m+2}$.

\begin{example}
  In the case $m = 0$, upon identifying $S^\bullet (\bigwedge^2 E) = \bC [x_{i,j} \mid i < j ]$, the ideal $\bS_{(2,2)} (E)$ has generators of the form $x_{i,k} x_{j, \ell} + x_{j,k} x_{i,\ell}$ where $i<k$, $i \le j$, $k \le \ell$, and $j < \ell$ (if $j > k$ then this becomes $x_{i,k} x_{j, \ell} - x_{k,j} x_{i,\ell}$). The quotient by this ideal is just the even degree subalgebra of the exterior algebra $\bigwedge^\bullet E$ (i.e., the second Veronese subalgebra), which is also precisely the even spinor representation of $\so (E^* \oplus E)$ (up to a factor of $1/2$-times the trace representation).
\end{example}

To obtain the resolutions for any $m \geq 0$ we use a relative version of the resolutions induced by the resolution of the $m=0$ case and take cohomology. The statement of Corollary \ref{cor:easyFrobCase} will allow us to give a uniform combinatorial description of these resolutions.

\begin{proposition}\label{lem:S22Resolution}
  There is an equality
    $$\tor_i^{S^\bullet \left( \bigwedge^2 E \right)} \left( \frac{S^\bullet (\bigwedge^2 E)}{(\bS_{(2,2)} (E))} , \bC \right) = \bigoplus_{\substack{\underline{a} \in Q_{0}^{\rm ev} \\ \deg \underline{a} = i} } \bS_{\underline{a}} (E).$$
\end{proposition}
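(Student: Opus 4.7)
My plan is to identify $R/I$ with the even exterior algebra $\bigwedge^{\mathrm{ev}} E$ and then compute its minimal free resolution via a parabolic BGG-type resolution of the corresponding half-spin representation of $\fso(E \oplus E^*)$.

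For the identification, consider the algebra surjection
\[
  \phi\colon S^\bullet(\bigwedge^2 E) \twoheadrightarrow \bigwedge^\bullet E
\]
extending the inclusion $\bigwedge^2 E \hookrightarrow \bigwedge^\bullet E$; its image is $\bigwedge^{\mathrm{ev}} E$. By Proposition~\ref{prop:plethysms} we have $S^2(\bigwedge^2 E) = \bS_{(2,2)}(E) \oplus \bigwedge^4 E$, and the restriction of $\phi$ to $S^2(\bigwedge^2 E)$ is precisely the projection onto the $\bigwedge^4 E$ summand. Hence $\ker \phi \cap S^2(\bigwedge^2 E) = \bS_{(2,2)}(E)$, and an induction on degree (using that $\ker \phi$ is an ideal) yields $\ker \phi = (\bS_{(2,2)}(E))$, so $R/I \cong \bigwedge^{\mathrm{ev}} E$.

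For the resolution, set $\fg \coloneq \fso(E \oplus E^*)$ with standard hyperbolic form, and let $\fp \subset \fg$ be the parabolic with Levi $\fgl(E)$ whose abelian nilradical is identified with $\bigwedge^2 E$; then $\rU(\bigwedge^2 E) \cong S^\bullet(\bigwedge^2 E) = R$. The module $\bigwedge^{\mathrm{ev}} E$ is a half-spin representation of $\fg$, cyclically generated over $R$ by its vacuum vector $1 \in \bigwedge^0 E$. The corresponding parabolic BGG resolution, viewed as a resolution by free $R$-modules, gives the minimal free resolution with $i$-th term
\[
  \bigoplus_{\substack{w \in W^\fp \\ \ell(w) = i}} R \otimes L^\fp(w \bullet \nu),
\]
where $\nu$ is the appropriate extremal weight and $W^\fp$ is the set of minimal-length coset representatives of $W(\mathrm{D}_n)/W(\mathrm{A}_{n-1})$. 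To match this with $Q_0^{\mathrm{ev}}$, the cosets $W^\fp$ are naturally indexed by even-cardinality subsets $S = \{i_1 < \cdots < i_d\} \subseteq \{1, \ldots, n\}$; I would verify by direct computation of the dotted action that $S$ corresponds to the self-conjugate partition with Frobenius coordinates $\underline{a} = (i_d - 1, \ldots, i_1 - 1) \in Q_0^d$, and that $\ell(w_S) = \deg \underline{a}$. (As a sanity check, for $\dim E = 3$ the four cosets $\emptyset, \{1,2\}, \{1,3\}, \{2,3\}$ have lengths $0, 1, 2, 3$ and give precisely the Tor groups $\bC, \bS_{(2,2)}(E), \bS_{(3,2,1)}(E), \bS_{(3,3,2)}(E)$ with dimensions $1, 6, 8, 3$ matching a direct Hilbert series computation.)

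The main obstacle is carefully tracking the half-integer spin normalizations through the dotted Weyl action so that the resulting $\fgl(E)$-representations are interpreted as polynomial Schur functors with the correct Frobenius coordinates, and correctly applying the appropriate (highest- versus lowest-weight) BGG variant since $\bigwedge^{\mathrm{ev}} E$ is generated over $R$ by its lowest-weight vector. A cleaner alternative route is to establish the identity purely at the character level via a signed Littlewood identity over self-conjugate partitions (which should follow from Proposition~\ref{prop:plethysms} and the Jacobi--Trudi-style determinantal manipulations in the spirit of Proposition~\ref{prop:euler-det}), then conclude by a minimality argument observing that distinct self-conjugate Schur summands cannot appear in two consecutive homological degrees of a minimal $\GL(E)$-equivariant resolution.
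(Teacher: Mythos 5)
Your approach is correct but genuinely different from the paper's. The paper simply cites the known equivariant Tor computation for $S^\bullet(S^2 F)/(\bS_{(2,2)}F)$ (from Weyman's book, equivalently \cite[Prop.~2.4]{sam2013homology}) and then applies the transpose operation on polynomial functors: $(S^\bullet \circ S^2)^T = S^\bullet \circ \bigwedge^2$, and the Frobenius coordinates in $Q_0^{\rm ev}$ index \emph{self-conjugate} partitions, so the answer is unchanged under transpose. That is a two-line proof, but it outsources all the actual work to a cited theorem. Your BGG-resolution route recomputes that input from scratch: it identifies $R/I$ with the even half-spin representation of $\fso(E\oplus E^*)$ and invokes the Lepowsky parabolic BGG resolution for the Hermitian symmetric pair $(\rD_n,\rA_{n-1})$, whose terms are automatically $R$-free because the nilradical $\bigwedge^2 E$ is abelian. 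Your sanity check at $\dim E=3$ is correct (the four cosets give $\bC,\,\bS_{(2,2)},\,\bS_{(3,2,1)},\,\bS_{(3,3,2)}$ with degrees $0,2,3,4$, matching the Hilbert series $(1+3t)/(1-t)^3$), and the Frobenius-coordinate bookkeeping you propose is exactly the right bijection. The two arguments are essentially Howe-dual to each other—the cited $S^2$ result is the resolution of the second Veronese of $S^\bullet F$, which is the $\fsp$-side story—so your proof makes visible the geometry that the paper's proof encapsulates in a citation plus a transpose.

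One logical wrinkle: the preliminary step ``\,$\ker\phi = (\bS_{(2,2)}(E))$ by induction on degree\,'' is not actually established by the induction you sketch. Knowing that the degree-$2$ component of $\ker\phi$ is $\bS_{(2,2)}$ and that $\ker\phi$ is an ideal does not by itself show $\ker\phi$ is generated in degree $2$; that is the quadraticity of the even subalgebra of $\bigwedge^\bullet E$, which needs a separate argument (Koszulity of Veronese subalgebras, or a Hilbert-series comparison). However, this is easily repaired by reordering: run the BGG resolution of the half-spin representation first, observe that it is a minimal $R$-free resolution of a cyclic module with $\Tor_1 = \bS_{(2,2)}(E)$, and deduce $\ker\phi = (\bS_{(2,2)}(E))$ as a \emph{consequence} rather than a prerequisite. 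With that reorganization, and the half-integer weight bookkeeping you already flagged, the proof goes through.
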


\begin{proof}
  By \cite[Theorem 6.3.1(c)]{weyman2003} (see also \cite[Proposition 2.4]{sam2013homology}), we have for any vector space $F$:
  \[
    \tor_i^{S^\bullet ( S^2 F )} \left( \frac{S^\bullet (S^2 F)}{(\bS_{(2,2)} F)} , \bC \right) = \bigoplus_{\substack{\underline{a} \in Q_{0}^{\rm ev} \\ \deg \underline{a} = i} } \bS_{\underline{a}} (F).
  \]
  Since this holds for any $F$, it is an identity of polynomial functors, and hence we can apply the transpose operation (see \cite[\S 7.4]{introtca}) to both sides. On Schur functors this has the effect $\bS_\lambda \mapsto \bS_{\lambda^T}$. The right hand side consists of self-conjugate partitions (i.e., $\lambda^T = \lambda$), so they are unaffected, while $(S^\bullet \circ S^2)^T = (S^\bullet \circ \bigwedge^2)$ by \cite[(7.4.8)]{introtca}. Hence we get the identity
    \[
    \tor_i^{S^\bullet ( \bigwedge^2 F )} \left( \frac{S^\bullet (\bigwedge^2 F)}{(\bS_{(2,2)} F)} , \bC \right) = \bigoplus_{\substack{\underline{a} \in Q_{0}^{\rm ev} \\ \deg \underline{a} = i} } \bS_{\underline{a}} (F)
  \]
  for all $F$. Now take $F=E$.
\end{proof}

\begin{remark} \label{rmk:SO-coinv}
  It follows from the proof of \cite[Proposition 6.3.3]{weyman2003} that there is an $S^\bullet(S^2 E)$-module $M$ with
  \[
    \tor_i^{S^\bullet ( S^2 E )} ( M , \bC ) = \bigoplus_{\substack{\underline{a} \in Q_{0}^{\rm od} \\ \deg \underline{a} = i} } \bS_{\underline{a}} (E).
  \]
  Using the same transpose trick as above, we can construct an $S^\bullet(\bigwedge^2 E)$-module $N$ such that
    \[
    \tor_i^{S^\bullet ( \bigwedge^2 E )} ( N , \bC ) = \bigoplus_{\substack{\underline{a} \in Q_{0}^{\rm od} \\ \deg \underline{a} = i} } \bS_{\underline{a}} (E). \qedhere
  \]
\end{remark}

It turns out that an identical description may be used for the terms of the resolutions of $(\bS_{(2^{2m+2})} (E)) \subset S^\bullet (\bigwedge^2 E)$, where we only need to replace $Q_0^{\rm ev}$ with $Q_{2m}^{\rm ev}$: 

\begin{proposition}\label{lem:theIdealResolution}
    There is an equality
    \[
      \tor_i^{S^\bullet \left(\bigwedge^2 E \right)} \left( \frac{S^\bullet (\bigwedge^2 E)}{(\bS_{(2^{2m+2})} (E) )} , \bC \right) = \bigoplus_{\substack{\ul{a} \in Q_{2m}^{\rm ev} \\ \deg \ul{a} = i} } \bS_{\ul{a}} (E) .
    \]
\end{proposition}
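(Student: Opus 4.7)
The plan is to prove this via a relative version of the $m = 0$ case (Proposition~\ref{lem:S22Resolution}), carried out over a Grassmannian, with the pushforward computed by Bott's theorem in Frobenius coordinates (Corollary~\ref{cor:easyFrobCase}). Setting $n = \dim E$, I would consider $X = \Gr(n - m, E)$ with tautological sequence $0 \to \cR \to E \to \cQ \to 0$ (so $\rank \cR = n - m$, $\rank \cQ = m$). Applying Proposition~\ref{lem:S22Resolution} functorially to $\cR$ produces a $\GL(E)$-equivariant complex $\cG_\bullet$ of sheaves on $X$ with
\[
  \cG_i = \bigoplus_{\substack{\ul{a} \in Q_0^{\rm ev}\\ \deg \ul{a} = i}} \bS_{\ul{a}}(\cR) \otimes_{\cO_X} S^\bullet(\bigwedge^2 \cR),
\]
resolving $S^\bullet(\bigwedge^2 \cR)/(\bS_{(2,2)}\cR)$ as a relative sheaf. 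The key step is then to extend this relative complex to an $S^\bullet(\bigwedge^2 E)$-module complex (via the inclusion $\bigwedge^2 \cR \hookrightarrow \bigwedge^2 E$) so that pushing forward to a point yields a free resolution of $S^\bullet(\bigwedge^2 E) / (\bS_{(2^{2m+2})} E)$.

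Once such an extension is in place, I would apply Corollary~\ref{cor:easyFrobCase} term by term: on each summand $\bS_{(\ul{a}\mid\ul{b})}(\cR)$, cohomology vanishes unless $a_d \geq m$, and in that case equals $\bS_{(\ul{a} - m^d\mid\ul{b} + m^d)}(E)$ concentrated in cohomological degree $dm$. The Bott shift takes $a_j - b_j = 0$ to $a_j - b_j = -2m$, producing exactly the Schur functors indexed by $Q_{-2m}^{\rm ev}$ (which matches the $Q_{2m}^{\rm ev}$ in the statement up to an apparent sign convention for $a - b$ vs.\ $b - a$). As a sanity check at $m = 1$: the self-conjugate partition $(3,3,2) \in Q_0^{\rm ev}$ with Frobenius coordinates $(2,1 \mid 2,1)$ and degree $3$ Bott-shifts to $\bS_{(2^4)}(E)$ with Frobenius $(1, 0 \mid 3, 2)$, i.e.\ precisely the generator of the ideal $(\bS_{(2^{2m+2})}E)$. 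Since each term has cohomology concentrated in a single degree, the hypercohomology spectral sequence degenerates automatically, and the surviving terms enumerate the claimed Tor decomposition.

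The main obstacle is the extension to an $S^\bullet(\bigwedge^2 E)$-module complex in the previous step, and identifying the global zeroth homology with $S^\bullet(\bigwedge^2 E) / (\bS_{(2^{2m+2})} E)$ rather than a coarser quotient. A subtle point is that the ``obvious'' generator $\bS_{(2,2)}(\cR)$ of the relative ideal on $X$ has $a_d = 0 < m$, so its Bott pushforward is zero; the true generator of the base ideal arises instead from the deeper term $\bS_{(m+1, m\mid m+1, m)}(\cR)$ in Frobenius degree $2m+1$, whose cohomology lives in geometric degree $2m$, shifting it back into Frobenius degree $1 = (2m+1) - 2m$ on the base. Carefully bookkeeping these cohomological vs.\ homological degree shifts, and verifying that the Bott-surviving summands give a bijection with $Q_{-2m}^{\rm ev}$ preserving Frobenius degree, forms the crux of the argument.
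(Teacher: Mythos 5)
Your proposal is essentially the same as the paper's proof: both run the $m=0$ resolution of Proposition~\ref{lem:S22Resolution} relatively over $X=\Gr(n-m,E)$, tensor it up to an $S^\bullet(\bigwedge^2 E)$-linear complex of sheaves (the paper does this by tensoring the relative Koszul-type complex with $\cA \coloneq S^\bullet(\bigwedge^2 E)\otimes\cO_X$ along the flat map $S^\bullet(\bigwedge^2\cR)\to\cA$), and push forward to $\Spec S^\bullet(\bigwedge^2 E)$, reading off cohomology via Corollary~\ref{cor:easyFrobCase}. The two obstacles you flag are exactly the ones the paper addresses: (a) the extension is handled by the $\cA$-tensoring, and the identification of the zeroth homology with $S^\bullet(\bigwedge^2 E)/(\bS_{(2^{2m+2})}E)$ is deduced from the geometric-technique dichotomy — once one checks $\bF_i=0$ for $i<0$ (which follows from $a_d\ge m$ forcing $\deg\ul{a}\ge\binom{d}{2}+md$), the pushed-forward minimal complex is a free resolution of its $\rH_0$, and one then computes $\bF_0=A$, $\bF_1 = \bS_{(2^{2m+2})}(E)\otimes A$ directly; and (b) the paper's $\bF_1$ computation ($d=2$, $a_1=m+1$, $a_2=m$) is exactly the $\bS_{(m+1,m\mid m+1,m)}(\cR)$ term you identify. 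Your sanity check at $m=1$ and the Frobenius bookkeeping $\deg(\ul{a}-m^d)=\deg\ul{a}-dm$ coinciding with the homological shift $j-dm$ are both correct. One remark: you are right to be suspicious of the sign. The Bott shift sends $a_j-b_j=0$ to $a_j-b_j=-2m$, so the Tor groups are indexed by $Q_{-2m}^{\rm ev}$ under the paper's own Definition~\ref{def:theQsets}, and indeed $(2^{2m+2})$ has Frobenius coordinates $(1,0\mid 2m+1,2m)\in Q_{-2m}^2$; the statement's $Q_{2m}^{\rm ev}$ appears to be a sign typo, not a different convention.
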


\begin{proof}
  Set $A = S^\bullet(\bigwedge^2 E)$ and $\cA \coloneq A\otimes \cO_{\Gr(n-m,E)}$. (We are continuing to use the notation $n=\dim E$.) Consider the tautological rank $n-m$ subbundle $\cR$ on $\Gr (n-m, E)$. We view $\cA$ as a flat extension of $S^\bullet (\bigwedge^2 \cR)$, and consider the global version of the complex induced by Proposition \ref{lem:S22Resolution} (tensored with $\cat{A}$), which is a resolution by vector bundles
    \[
      \cdots \to \cat{A} \otimes F_i (\cR) \to \cdots \to \cat{A} \otimes F_1 (\cR) \to \cat{A} \otimes F_0 (\cR), \qquad    F_i (\cR) \coloneq \bigoplus_{\substack{\underline{a} \in Q^{\rm ev}_0 \\ \deg \underline{a} = i}} \bS_{\ul{a}} (\cR).
    \]
    Taking the derived pushforward to $\Spec(A)$ gives a minimal complex $\bF_\bullet$ with terms
    \[
      \bF_i = \bigoplus_{j \geq 0} \rH^{j-i} ( \Gr(n-m,E), F_j (\cR)) \otimes A.
    \]
    For integers $i,d \ge 0$ with $d$ even, Corollary~\ref{cor:easyFrobCase} gives
    \[
      \rH^{md} (\Gr(n-m,E), F_i (\cR)) = \bigoplus_{\substack{\ul{a} \in Q_0^d \\ a_d \ge m \\ \deg \ul{a} = i}} \bS_{(\ul{a} - m^d \mid \ul{b} + m^d)} (E),
    \]
    and cohomology vanishes in all degrees not divisible by $m$. Since $a_1>\cdots>a_d$, the condition $a_d \ge m$ implies $\deg \ul{a} \ge \binom{d}{2} + md$. This implies that these terms contribute to $\bF_i$ where $i \ge \binom{d}{2}$, and in particular, $\bF_i=0$ if $i<0$. Also, we claim that
    \begin{align*}
      \bF_0 = A, \qquad \bF_1 = \bS_{(2^{2m+2})}(E) \otimes A.
    \end{align*}
    The contributions to $\bF_0,\bF_1$ come from $\ul{a}$ such that $\deg \ul{a} - md \le 1$, which means that $\binom{d}{2} \le 1$, i.e., $d =0$ or $d=2$ (since $d$ must be even). The case $d=0$ just means that $\ul{a}=0$, while for $d=2$ we must have $a_1=m+1$ and $a_2=m$ to get a contribution to $\bF_1$.

    In particular, $\bF_\bullet$ is the minimal free resolution of $A/(\bS_{2^{2m+2}}(E))$. Hence we get
    \begin{align*}
      \tor_i^A (A/(\bS_{(2^{2m+2})} (E) ) , \bC )
      &= \bigoplus_{j \geq 0} \rH^{j-i} ( \Gr(n-m,E), F_j (\cR))\\
      &= \bigoplus_{d\ {\rm even}} \rH^{md}(\Gr(n-m,E), F_{md+i}(\cR))\\
      &= \bigoplus_{d\ {\rm even}} \bigoplus_{\substack{\ul{a} \in Q_0^d\\ a_d \ge m\\ \deg \ul{a} = md+i}} \bS_{(\ul{a} - m^d \mid \ul{b} + m^d)} (E)\\
      &= \bigoplus_{\substack{\ul{a} \in Q_{2m}^{\rm ev} \\ \deg \ul{a} = i} } \bS_{\ul{a}} (E) . \qedhere
    \end{align*}
\end{proof}

\begin{remark}
  By transposing Schur functors, we can also use this to compute the Tor groups of the ideal generated by $\bS_{(2m+2,2m+2)}(E)$ in $S^\bullet(S^2 E)$.
\end{remark}

\begin{example}
    Lemma \ref{lem:theIdealResolution} above shows that as $m$ varies, the irreducible representations in each homological degree are parametrized by the same set of objects: all decreasing tuples of integers with even length. However, the partitions corresponding to the terms change as $m$ changes (we employ the shorthand mentioned in Remark \ref{rk:FrobAbuse} below): 
    { \tiny % https://q.uiver.app/#q=WzAsMjEsWzYsMCwiXFx2YXJub3RoaW5nIl0sWzUsMCwiKDIsMSkiXSxbNCwwLCIoMywxKSJdLFszLDAsIlxcYmVnaW57bWF0cml4fSAoMywyKSBcXFxcIFxcb3BsdXMgXFxcXCAoNCwxKSBcXGVuZHttYXRyaXh9Il0sWzIsMCwiXFxiZWdpbnttYXRyaXh9ICgzLDIsMSkgXFxcXCBcXG9wbHVzIFxcXFwgKDQsMikgXFxcXCBcXG9wbHVzIFxcXFwgKDUsMSlcXGVuZHttYXRyaXh9Il0sWzEsMCwiXFxiZWdpbnttYXRyaXh9ICg0LDIsMSkgXFxcXCBcXG9wbHVzIFxcXFwgKDQsMykgXFxcXFxcb3BsdXMgXFxcXCAoNSwyKSBcXFxcIFxcb3BsdXMgXFxcXCAoNiwxKSBcXGVuZHttYXRyaXh9Il0sWzYsMSwiXFx2YXJub3RoaW5nIl0sWzUsMSwiKDIsMikiXSxbNCwxLCIoMywyLDEpIl0sWzMsMSwiXFxiZWdpbnttYXRyaXh9ICgzLDMsMikgXFxcXCBcXG9wbHVzIFxcXFwgKDQsMiwxLDEpIFxcZW5ke21hdHJpeH0iXSxbMiwxLCJcXGJlZ2lue21hdHJpeH0gKDMsMywzKSBcXFxcIFxcb3BsdXMgXFxcXCAoNCwzLDIsMSkgXFxcXCBcXG9wbHVzIFxcXFwgKDUsMiwxLDEsMSkgXFxlbmR7bWF0cml4fSJdLFsxLDEsIlxcYmVnaW57bWF0cml4fSAoNCwzLDMsMSkgXFxcXCBcXG9wbHVzIFxcXFwgKDQsNCwyLDIpIFxcXFxcXG9wbHVzIFxcXFwgKDUsMywyLDEsMSkgXFxcXCBcXG9wbHVzIFxcXFwgKDYsMiwxLDEsMSwxKSBcXGVuZHttYXRyaXh9Il0sWzEsMiwiXFxiZWdpbnttYXRyaXh9ICg0LDMsMywyXjIsMSkgXFxcXCBcXG9wbHVzIFxcXFwgKDQsNCwyLDIsMl4yKSBcXFxcIFxcb3BsdXMgXFxcXCAoNSwzLDIsMl4yLDEsMSkgXFxcXCBcXG9wbHVzIFxcXFwgKDYsMiwyXjIsMSwxLDEsMSkgXFxlbmR7bWF0cml4fSJdLFsyLDIsIlxcYmVnaW57bWF0cml4fSAoMywzLDMsMl4yKSBcXFxcIFxcb3BsdXMgXFxcXCAoNCwzLDIsMl4yLDEpIFxcXFwgXFxvcGx1cyBcXFxcICg1LDIsMl4yLDEsMSwxKSBcXGVuZHttYXRyaXh9Il0sWzMsMiwiXFxiZWdpbnttYXRyaXh9ICgzLDMsMiwyXjIpIFxcXFwgXFxvcGx1cyBcXFxcICg0LDIsMl4yLDEsMSkgXFxlbmR7bWF0cml4fSJdLFs0LDIsIigzLDIsMl4yLDEpIl0sWzUsMiwiKDIsMiwyXjIpIl0sWzYsMiwiXFx2YXJub3RoaW5nIl0sWzAsMiwiaz0xOiJdLFswLDAsIlxcdGV4dHtGcm9iLiBjb29yZHM6fSJdLFswLDEsIms9MDoiXSxbNSw0XSxbNCwzXSxbMywyXSxbMiwxXSxbMSwwXSxbMTEsMTBdLFsxMCw5XSxbOSw4XSxbOCw3XSxbNyw2XSxbMTgsMTIsIlxcY2RvdHMiXSxbMTIsMTNdLFsxMywxNF0sWzE0LDE1XSxbMTUsMTZdLFsxNiwxN10sWzE5LDUsIlxcY2RvdHMiXSxbMjAsMTEsIlxcY2RvdHMiXV0=
\[\begin{tikzcd}
	{\text{Frob. coords:}} & \begin{array}{c} \begin{matrix} (3,2) \\\oplus \\ (4,1) \\ \oplus \\ (5,0) \end{matrix} \end{array} & \begin{array}{c} \begin{matrix} (3,1) \\ \oplus \\ (4,0)\end{matrix} \end{array} & \begin{array}{c} \begin{matrix} (2,1) \\ \oplus \\ (3,0) \end{matrix} \end{array} & {(2,0)} & {(1,0)} & \varnothing \\
	{m=0:} & \begin{array}{c} \begin{matrix} (4,4,2,2) \\\oplus \\ (5,3,2,1,1) \\ \oplus \\ (6,2,1,1,1,1) \end{matrix} \end{array} & \begin{array}{c} \begin{matrix} (4,3,2,1) \\ \oplus \\ (5,2,1,1,1) \end{matrix} \end{array} & \begin{array}{c} \begin{matrix} (3,3,2) \\ \oplus \\ (4,2,1,1) \end{matrix} \end{array} & {(3,2,1)} & {(2,2)} & \varnothing \\
	{m=1:} & \begin{array}{c} \begin{matrix} (4,4,2,2,2^2) \\ \oplus \\ (5,3,2,2^2,1,1) \\ \oplus \\ (6,2,2^2,1,1,1,1) \end{matrix} \end{array} & \begin{array}{c} \begin{matrix}  (4,3,2,2^2,1) \\ \oplus \\ (5,2,2^2,1,1,1) \end{matrix} \end{array} & \begin{array}{c} \begin{matrix} (3,3,2,2^2) \\ \oplus \\ (4,2,2^2,1,1) \end{matrix} \end{array} & {(3,2,2^2,1)} & {(2,2,2^2)} & \varnothing
	\arrow["\cdots", from=1-1, to=1-2]
	\arrow[from=1-2, to=1-3]
	\arrow[from=1-3, to=1-4]
	\arrow[from=1-4, to=1-5]
	\arrow[from=1-5, to=1-6]
	\arrow[from=1-6, to=1-7]
	\arrow["\cdots", from=2-1, to=2-2]
	\arrow[from=2-2, to=2-3]
	\arrow[from=2-3, to=2-4]
	\arrow[from=2-4, to=2-5]
	\arrow[from=2-5, to=2-6]
	\arrow[from=2-6, to=2-7]
	\arrow["\cdots", from=3-1, to=3-2]
	\arrow[from=3-2, to=3-3]
	\arrow[from=3-3, to=3-4]
	\arrow[from=3-4, to=3-5]
	\arrow[from=3-5, to=3-6]
	\arrow[from=3-6, to=3-7]
      \end{tikzcd}\]}
  
Moving from the $m= 0$ case to the $m > 0$ case amounts to adding $2m$ boxes to each of the first $d$ columns, where $d$ is the rank of the partition. 
\end{example}

\subsection{Equivariant resolutions of the modules $M_\lambda$}\label{subsec:MlambdaRes}

In this section we define a class of $\fgl(E)$-equivariant $S^\bullet(\bigwedge^2 E)$-modules $M_\lambda$ which are analogous to those studied in \cite{sam2013homology} and which specialize to $S^\bullet(\bigwedge^2 E)/(\bS_{2^{2m+2}} E)$ in the case $\lambda=0$. We compute the terms of the minimal $\gl (E)$-equivariant $S^\bullet (\bigwedge^2 E)$-free resolutions of these modules and use this to compute the homology of the $\Sp\bO(2m|1)$-Littlewood complexes.

\begin{definition}\label{def:MlambdaMods}
  Let $\lambda$ be a partition with $\ell(\lambda) \le m$ and $E$ a vector space with $\dim E = n$.

\begin{enumerate}[(1)]
\item   If $n > m$, define $X \coloneq \Gr (n - m , E)$ with tautological rank $n  - m$ subbundle $\cR$ and tautological rank $m$ quotient bundle $\cQ$. Set
    \begin{align*}
      \cM_\lambda &\coloneq \coker \left( S^\bullet (\bigwedge^2 E) \otimes  \bS_{(2,2)} (\cR) \otimes_{\cO_X} \bS_\lambda (\cat{Q})  \to S^\bullet (\bigwedge^2 E) \otimes  \bS_\lambda (\cat{Q}) \right)\\
        \cM_{\lambda^\sigma} &\coloneq \coker \left( S^\bullet (\bigwedge^2 E) \otimes  \bS_{(2,1)} (\cR) \otimes_{\cO_X} \bS_\lambda (\cat{Q})  \to S^\bullet (\bigwedge^2 E) \otimes \cR \otimes \bS_\lambda (\cQ) \right),
    \end{align*}
    where the map is induced by tensoring the global version of complexes of Lemma~\ref{lem:S22Resolution} and Remark~\ref{rmk:SO-coinv} with $\bS_\lambda (\cQ)$: 
    \[
      S^\bullet (\bigwedge^2 \cR) \otimes  \bS_{(2,2)} (\cR)  \to S^\bullet (\bigwedge^2 \cR), \qquad
      S^\bullet (\bigwedge^2 \cR) \otimes  \bS_{(2,1)} (\cR)  \to S^\bullet (\bigwedge^2 \cR) \otimes \cR,
    \]
    then tensored along the flat map $S^\bullet (\bigwedge^2 \cR) \to S^\bullet (\bigwedge^2 E) \otimes  \cO_X$. Finally we define $S^\bullet (\bigwedge^2 E)$-modules $M_\lambda$ and $M_{\lambda^\sigma}$ via
    \[
      M_\lambda \coloneq \rH^0 (X , \cat{M}_\lambda), \qquad       M_{\lambda^\sigma} \coloneq \rH^0 (X , \cat{M}_{\lambda^\sigma}).
    \]

  \item  If $n \le m$, we will define $M_\lambda$ to be the free $S^\bullet(\bigwedge^2 E)$-module $\bS_\lambda E \otimes S^\bullet(\bigwedge^2 E)$ and $M_{\lambda^\sigma} = 0$.    \qedhere
  \end{enumerate}
\end{definition}

For this next portion, we assume $n > m$.

To compute cohomology for sheaves on $X$, we will use Borel--Weil--Bott (Theorem~\ref{thm:bott}). We recall that for sequences $\alpha$ of length $n$ and permutations $w$, we use a dotted action $w \bullet \alpha = w(\alpha + \rho)-\rho$ where $\rho = (n-1, \dots, 1,0)$. If $\ell(\lambda)\le m$ and $\ell(\mu) \le n-m$, we define $(\lambda \mid \mu) = (\lambda_1,\dots,\lambda_m, \mu_1,\dots, \mu_{n - m})$ where we pad $\lambda$ with 0's if necessary.

We recall the following sets originally defined in \cite[Section 4]{sam2013homology}.

\begin{definition}
    A pair of partitions $(\lambda , \mu)$ is \defi{singular} if there exists a non-identity permutation $w$ such that $w \bullet (\lambda \mid \mu) = (\lambda  \mid \mu)$. A pair $(\lambda , \mu)$ is \defi{regular} otherwise. Given a partition $\lambda$ with $\ell(\lambda) \le m$, define the two sets
    \begin{align*}
      \overline{S_1} (\lambda) &\coloneq \{ \mu \in Q_0 \mid (\lambda \mid \mu) \ \text{is regular} \},\\
    \overline{S_2} (\lambda) &\coloneq \{ \alpha \mid \ol{\tau_{2m|1} (\alpha)} = \lambda \}. \qedhere
    \end{align*}
  \end{definition}
  
\begin{proposition}[{\cite[Proposition 4.14]{sam2013homology}}]\label{prop:SsetBijection}
    There is a unique bijection $\phi \colon \overline{S_1} (\lambda) \to \overline{S_2} (\lambda)$ satisfying $\phi (\mu) = \alpha$ if and only if there exists a permutation $w$ satisfying $w \bullet (\lambda \mid \mu) = \alpha$. Moreover, we have that $\ell (w) + i_{2m|1} (\phi (\mu)) = \frac12 (|\mu| - \rank (\mu))$ and
    \[
      \tau_{2m|1} (\phi (\mu)) = \begin{cases}
        \lambda & \text{if} \ \rank (\mu) \ \text{is even}, \\
        \lambda^\sigma & \text{if} \ \rank (\mu) \ \text{is odd.}
      \end{cases}
    \]
\end{proposition}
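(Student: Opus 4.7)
The plan is to define $\phi$ by sorting and to match its properties against the border-strip version of the modification rule. Given $\mu \in \ol{S_1}(\lambda)$, regularity of $(\lambda \mid \mu)$ says that the entries of $(\lambda \mid \mu) + \rho$ are pairwise distinct, so there is a unique permutation $w$ arranging them in strictly decreasing order. Setting $\alpha \coloneq w \bullet (\lambda \mid \mu)$ automatically produces a partition, and this is the only possible definition for $\phi(\mu)$. Hence the substantive content of the proposition is that $\alpha \in \ol{S_2}(\lambda)$, together with the claimed length relation, parity rule, and bijectivity.

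I would proceed by induction on $d = \rank(\mu)$. The base case $d = 0$ is immediate: $\mu = \varnothing$, $w$ is the identity, and $\alpha = \lambda$ is already admissible since $\ell(\lambda) \le m$ forces $\lambda_1^T + \lambda_2^T \le 2m$, giving $\tau_{2m|1}(\lambda) = \lambda$ and zero for both $\ell(w)$ and $i_{2m|1}(\alpha)$. For the inductive step, pass from $\mu \in Q_0^d$ with Frobenius coordinates $(a_1, \dots, a_d \mid a_1, \dots, a_d)$ to $\mu' \in Q_0^{d-1}$ with coordinates $(a_2, \dots, a_d \mid a_2, \dots, a_d)$ by deleting the outer main-diagonal hook. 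The sort of $(\lambda \mid \mu) + \rho$ then differs from that of $(\lambda \mid \mu') + \rho$ by the insertion of one additional large entry. The key claim is that this single insertion corresponds exactly to one iteration of border-strip removal in the modification rule applied to $\alpha$: if $\tilde{\alpha}$ denotes the intermediate partition reached during modification, the border strip removed has length $2 \tilde{\alpha}_1^T - 2m - 1$, which matches the number of inversions incurred by the insertion.

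Granting this matching, iterating over the $d$ diagonals of $\mu$ yields exactly $d$ border-strip removals in the modification of $\alpha$, terminating at the admissible partition $\lambda$. This simultaneously establishes $\ol{\tau_{2m|1}(\alpha)} = \lambda$ and the parity statement, since the rule distinguishing $\tau_{2m|1}$ from $(\tau_{2m|1}')^\sigma$ alternates with each border-strip removal, so the parity of $\rank(\mu)$ produces the stated dichotomy. The length formula $\ell(w) + i_{2m|1}(\alpha) = \tfrac{1}{2}(|\mu| - d) = \sum_j a_j$ (using self-conjugacy of $\mu$) follows by summing per-diagonal contributions: the $j$-th insertion contributes some inversions to $\ell(w)$, while the corresponding border strip contributes $c(R_j) - 1$ to $i_{2m|1}$, and these should telescope to $a_j$. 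Bijectivity is then reversibility: starting from any $\alpha \in \ol{S_2}(\lambda)$, the modification rule records the border-strip lengths, which determine the $a_j$'s and hence a unique $\mu$.

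The principal obstacle is verifying the precise correspondence between a single sort-insertion step and a single border-strip removal, especially matching the length $2\tilde{\alpha}_1^T - 2m - 1$ against the count of inversions. The cleanest route is probably a direct Frobenius-coordinate computation, carefully tracking how the dotted symmetric-group action on $(\lambda \mid \mu) + \rho$ transports via transpose to the operations on $\mu^T$ induced by the dotted type-$\rD_\infty$ Weyl group action used to formulate the modification rule.
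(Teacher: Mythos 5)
The paper does not prove this proposition at all; the bracketed citation $\text{\cite[Proposition 4.14]{sam2013homology}}$ in the theorem header indicates it is imported verbatim from that reference, so there is no in-paper argument to compare against. Your proposal is therefore being judged on its own.

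Your outline has the right shape: the uniqueness of $\phi$ via sorting is correct, and the idea of peeling one Frobenius diagonal off $\mu$ at a time and matching it against one iteration of the border-strip modification rule applied to $\alpha$ is the natural strategy. But there are concrete gaps. First, the inductive reduction is imprecise: if $\mu$ has Frobenius coordinates $(a_1,\dots,a_d\mid a_1,\dots,a_d)$ and $\mu'$ has $(a_2,\dots,a_d\mid a_2,\dots,a_d)$, then the multiset of entries of $(\lambda\mid\mu)+\rho$ is not obtained from that of $(\lambda\mid\mu')+\rho$ by inserting a single large value; rather, the entry $n-m-\ell(\mu)$ is \emph{deleted} and the entry $\mu_1+n-m-1$ is \emph{inserted}, so the inversion count is a difference rather than a simple sum, and the bookkeeping toward $\ell(w)+i_{2m|1}(\alpha)=\sum_j a_j$ is more delicate than the sketch acknowledges. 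Second, the induction tacitly assumes that $(\lambda\mid\mu')$ is still regular whenever $(\lambda\mid\mu)$ is; this is plausible but not automatic and must be checked for the induction to be well-founded. Third, and most seriously, you explicitly defer the central technical step — that each such sort-step corresponds to exactly one border-strip removal of length $2\tilde\alpha_1^T-2m-1$, with inversions and column counts $c(R)-1$ matching — as the ``principal obstacle.'' That step is where essentially all the combinatorial content of the statement lives, so as written the proposal is an unverified plan rather than a proof. If you do not want to carry out that computation, the cleanest route is simply to cite \cite[Proposition 4.14]{sam2013homology}, as the paper does.
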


\begin{theorem}\label{thm:MlambdaTor}
Let $\lambda$ be an admissible partition. We have $\gl(E)$-equivariant isomorphisms:
\begin{align*}
  \tor_i^{S^\bullet \left( \bigwedge^2 E \right)} (M_\lambda , \bC) &= \bigoplus_{\substack{\tau_{2m|1} (\alpha) = \lambda, \\  i_{2m|1} (\alpha) = i}} \bS_{\alpha} (E).
\end{align*}
\end{theorem}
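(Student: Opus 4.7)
My plan is to apply the geometric technique in the style of \cite[Chapter~5]{weyman2003}, closely following the strategy already deployed in the proof of Proposition~\ref{lem:theIdealResolution}: first globalize the resolution from Proposition~\ref{lem:S22Resolution} and twist by $\bS_\lambda(\cQ)$, then push forward via Bott (Theorem~\ref{thm:bott}), and finally relabel the resulting free modules via Proposition~\ref{prop:SsetBijection}. The scheme parallels the analogous Tor computation for $\bO(2m+1)$ carried out in \cite{sam2013homology}.

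Writing $A = S^\bullet(\bigwedge^2 E)$ and $X = \Gr(n-m, E)$, I would first (in the generic case $n > m$ and $\ell(\lambda) \le m$) construct a locally free resolution $\cK_\bullet$ of $\cM_\lambda$ on $X \times \Spec A$ whose $i$th term is
\[
  \cK_i = A \otimes \bigoplus_{\substack{\ul{a} \in Q_0^{\mathrm{ev}} \\ \deg \ul{a} = i}} \bS_{\ul{a}}(\cR) \otimes_{\cO_X} \bS_\lambda(\cQ),
\]
obtained by globalizing the resolution in Proposition~\ref{lem:S22Resolution} over $X$ (with $\cR$ in place of $E$), twisting by $\bS_\lambda(\cQ)$, and extending scalars along the flat $\cO_X$-algebra map $S^\bullet(\bigwedge^2 \cR) \to A \otimes \cO_X$. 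The case $M_{\lambda^\sigma}$ is handled identically, starting instead from Remark~\ref{rmk:SO-coinv} (terms indexed by $Q_0^{\mathrm{od}}$).

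Next, Theorem~\ref{thm:bott} applied to each summand $\bS_{\ul{a}}(\cR) \otimes \bS_\lambda(\cQ)$ (with $\mu$ the self-conjugate partition determined by $\ul{a}$) gives vanishing when $(\lambda \mid \mu)$ is singular, and otherwise cohomology concentrated in degree $\ell(w)$ equal to $\bS_{\phi(\mu)}(E)$. The key numerical identity combines self-conjugacy $|\mu| = \rank\mu + 2\deg\ul{a}$ with the length formula of Proposition~\ref{prop:SsetBijection}, yielding
\[
  \deg\ul{a} - \ell(w) = i_{2m|1}(\phi(\mu)) \ge 0
\]
and $\tau_{2m|1}(\phi(\mu)) = \lambda$ (since $\rank\mu$ is even). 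All Bott contributions therefore lie on or above the diagonal $\ell(w) \le \deg\ul{a}$, so the geometric technique assembles $R\pi_*\cK_\bullet$ into a $\gl(E)$-equivariant complex of free $A$-modules $\bG_\bullet$ quasi-isomorphic to $M_\lambda$, with $R^p\pi_*\cM_\lambda = 0$ for $p > 0$ and
\[
  \bG_i = A \otimes \bigoplus_{\substack{\tau_{2m|1}(\alpha) = \lambda \\ i_{2m|1}(\alpha) = i}} \bS_\alpha(E).
\]

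Finally, this resolution is automatically minimal: by $\gl(E)$-equivariance and Schur's lemma, any $A$-linear map between $\bS_\alpha(E)\otimes A$ and $\bS_\beta(E)\otimes A$ with $\alpha \ne \beta$ has zero constant term, and across consecutive degrees the indexing partitions are forced to differ by their $i_{2m|1}$ values. Hence $\tor_i^A(M_\lambda, \bC) = \bG_i \otimes_A \bC$, which is the claimed formula. The edge case $n \le m$ is immediate: $M_\lambda = \bS_\lambda(E)\otimes A$ is free, and Lemma~\ref{lem:tor1} (iterated) shows that any $\alpha$ with $i_{2m|1}(\alpha) > 0$ and $\tau_{2m|1}(\alpha) = \lambda$ has $\ell(\alpha) \ge m+1 > n$, so only $\alpha = \lambda$ contributes on the right-hand side. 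I expect the main friction to be the spectral-sequence step of the geometric technique itself, namely verifying both that $R^p\pi_*\cM_\lambda = 0$ for $p > 0$ and that the total complex of $R\pi_*\cK_\bullet$ is a genuine free resolution; but the diagonal condition above together with the pattern of the proof of Proposition~\ref{lem:theIdealResolution} should suffice.
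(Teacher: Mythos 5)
Your proposal is correct and follows essentially the same route the paper takes: globalize the resolution from Proposition~\ref{lem:S22Resolution} on $\Gr(n-m,E)$, twist by $\bS_\lambda(\cQ)$, push forward via Bott, and relabel the surviving cohomology groups using the bijection $\phi$ of Proposition~\ref{prop:SsetBijection}, with $Q_0^{\rm od}$ and Remark~\ref{rmk:SO-coinv} substituted for $\ell(\lambda)>m$. Your extra bookkeeping (the identity $\deg\ul{a}-\ell(w)=i_{2m|1}(\phi(\mu))$ from $|\mu|=\rank\mu+2\deg\ul{a}$, the Schur-lemma minimality remark, and the $n\le m$ edge case) fills in details that the paper leaves implicit but does not change the argument; the "spectral-sequence friction" you flag is handled by the standard geometric technique exactly as in the paper's proof of Proposition~\ref{lem:theIdealResolution}.
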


\begin{proof}
  First assume $\ell(\lambda) \le m$. Set $A = S^\bullet(\bigwedge^2 E)$.
  
  Consider the resolutions by vector bundles induced by Proposition \ref{lem:S22Resolution}:
    \[
      \cdots \to A \otimes \bS_\lambda (\cat{Q}) \otimes F_i (\cR) \to \cdots \to A \otimes \bS_\lambda (\cQ) \otimes F_1 (\cR) \to A \otimes \bS_\lambda (\cQ) \otimes F_0 (\cR),
    \]
    where 
    \[
      F_i(\cR) \coloneq \bigoplus_{\substack{\ul{a} \in Q^{{\rm ev}}_0 \\ \deg(\ul{a}) = i}} \bS_{\ul{a}} (\cR).
    \]
    Taking the derived pushforward to $\Spec(A)$ gives a minimal complex $\bF_\bullet$ with terms
    \begin{align*}
      \bF_i &= A \otimes \bigoplus_{j \geq 0} \rH^{j-i} (X, \bS_\lambda (\cQ) \otimes F_j(\cR))\\
            &= A \otimes \bigoplus_{\substack{\mu \in Q_0^{\rm ev} \\ i_{2m|1} (\phi (\mu)) = i}} \bS_{\phi (\mu)} (E),
    \end{align*}
    where the second equality follows from Proposition~\ref{prop:SsetBijection}. In particular, $\bF_i = 0$ for $i<0$, so we conclude that $\bF_i$ is a minimal free resolution of $M_\lambda$.

    If $\ell(\lambda) > m$, then we instead use $Q_0^{\rm od}$ in the definition of $F_i(\cR)$ and we use Remark~\ref{rmk:SO-coinv} instead of Proposition~\ref{lem:S22Resolution}.
\end{proof}

For the remainder of the section we allow $n$ and $m$ to be arbitrary.

\begin{corollary} \label{cor:Mlambda-reldeg}
  For any admissible partition $\lambda$, we have that $\Tor_1^{S^\bullet(\bigwedge^2 E)}(M_\lambda, \bC) = \bS_\mu(E)$ where $\mu$ is given by the formula $\mu_1^T = 2m+2-\lambda_2^T$, $\mu_2^T=2m+2-\lambda_1^T$ and $\mu_i^T=\lambda_i^T$ for $i \ge 3$. In particular, this Tor group is concentrated in a single degree or is $0$.
\end{corollary}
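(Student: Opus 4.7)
My strategy is to combine the Tor computation of Theorem~\ref{thm:MlambdaTor} with the modification-rule analysis of Lemma~\ref{lem:tor1}. Specializing the former to $i=1$ reduces the problem to enumerating partitions $\alpha$ with $\tau_{2m|1}(\alpha)=\lambda$ and $i_{2m|1}(\alpha)=1$:
\[
\Tor_1^{S^\bullet(\bigwedge^2 E)}(M_\lambda,\bC) \;=\; \bigoplus_{\substack{\tau_{2m|1}(\alpha)=\lambda \\ i_{2m|1}(\alpha)=1}} \bS_\alpha(E).
\]
Lemma~\ref{lem:tor1} tells us that, for admissible $\lambda$, there is exactly one such $\alpha$, obtained by applying $s_0$ to $\lambda^T$ in the Weyl-group formulation of the modification rule, and that this $\alpha$ has column lengths $\alpha_1^T = 2m+2 - \lambda_2^T$, $\alpha_2^T = 2m+2 - \lambda_1^T$, and $\alpha_i^T = \lambda_i^T$ for $i\ge 3$. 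Substituting gives $\Tor_1(M_\lambda,\bC)=\bS_\mu(E)$, which is the main claim.

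There is one boundary case to dispose of: if $n=\dim E \le m$, then by Definition~\ref{def:MlambdaMods} the module $M_\lambda$ is free, so its $\Tor_1$ vanishes. This is consistent with the formula because admissibility of $\lambda$ combined with $\lambda_2^T\le\lambda_1^T$ forces $\lambda_2^T\le m$, whence $\mu_1^T \ge m+2 > n$ and therefore $\bS_\mu(E)=0$. The same convention $\bS_\mu(E)=0$ for $\ell(\mu)>n$ handles any remaining cases (e.g. $n=m+1$) in which Theorem~\ref{thm:MlambdaTor} produces a term that is representation-theoretically zero.

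The concentration in a single internal degree is immediate: since $\Tor_1$ equals the single irreducible $\bS_\mu(E)$ (or $0$), and the grading on $S^\bullet(\bigwedge^2 E)$ puts $\bigwedge^2 E$ in degree $1$, the representation $\bS_\mu(E)$ necessarily sits in internal degree $|\mu|/2$ and nowhere else. There is no real obstacle in this proof; all the combinatorial content is already encapsulated in Lemma~\ref{lem:tor1}, and the corollary is simply the $i=1$ shadow of Theorem~\ref{thm:MlambdaTor}.
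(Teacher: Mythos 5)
Your argument is correct and is essentially the paper's own proof, which simply cites Theorem~\ref{thm:MlambdaTor} together with Lemma~\ref{lem:tor1}. You spell out a bit more detail (the $n\le m$ boundary case and the internal-degree concentration), but the core logic is identical.
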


\begin{proof}
  This is immediate from the previous result and Lemma~\ref{lem:tor1}.
\end{proof}

\begin{lemma}
  Given an admissible partition $\lambda$, define
  \[
    S^\bullet (E \otimes U)_\lambda \coloneq \hom_{\Sp\bO (U)} (\bS_{[\lambda]} (U) , S^\bullet (E \otimes U)).
  \]
  Then there is an isomorphism of $\gl (E)$-representations
    $$M_\lambda \cong S^\bullet (E \otimes U)_\lambda.$$
\end{lemma}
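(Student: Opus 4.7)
The plan is to construct a natural $\gl(E)$-equivariant, $S^\bullet(\bigwedge^2 E)$-linear map $\phi \colon M_\lambda \to S^\bullet(E \otimes U)_\lambda$ and verify it is an isomorphism by a character comparison. Both sides carry natural $S^\bullet(\bigwedge^2 E)$-module structures: for $M_\lambda$ this is by construction, and for $S^\bullet(E \otimes U)_\lambda$ it follows because the subalgebra $S^\bullet(\bigwedge^2 E) \subset S^\bullet(E \otimes U)$ is generated by the $\Sp\bO(U)$-invariant form $\omega$ and therefore acts by multiplication commuting with the $\Sp\bO(U)$-action, preserving isotypic components.

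To build $\phi$, I would first locate a canonical copy of the $\Tor_0$-generator $\bS_\lambda(E)$ of $M_\lambda$ (from Theorem~\ref{thm:MlambdaTor}) inside $S^\bullet(E \otimes U)_\lambda$. From the Schur--Weyl decomposition $S^\bullet(E \otimes U) = \bigoplus_\alpha \bS_\alpha(E) \otimes \bS_\alpha(U)$ and the identity \eqref{eqn:finite-2} applied with $\nu = \emptyset$, the irreducible $\bS_{[\lambda]}(U)$ occurs in $\bS_\lambda(U)$ with multiplicity one, yielding a distinguished embedding $\bS_\lambda(E) \hookrightarrow S^\bullet(E \otimes U)_\lambda$. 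Extending $S^\bullet(\bigwedge^2 E)$-linearly gives a map $\psi \colon \bS_\lambda(E) \otimes S^\bullet(\bigwedge^2 E) \to S^\bullet(E \otimes U)_\lambda$, and I would check that it factors through $M_\lambda$ by pulling back to the Grassmannian $X = \Gr(n-m, E)$ and matching $\psi$ against the cokernel presentation of $\cM_\lambda$ in Definition~\ref{def:MlambdaMods}.

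The isomorphism then follows from a character comparison. Using $[S^\bullet(E \otimes U)] = \sum_\alpha [\bS_\alpha(E)] \cdot [\bS_\alpha(\bC^{2m|1})]$, expanding via \eqref{eqn:infinite-2} and the specialization Lemma~\ref{lem:spo-spec}, and applying Littlewood--Richardson, the coefficient of $[\bS_{[\lambda]}(U)]$ rewrites as
\[
  [S^\bullet(E \otimes U)_\lambda] = \sum_{\alpha,\nu \,:\, \tau_{2m|1}(\alpha) = \lambda} (-1)^{i_{2m|1}(\alpha)} [\bS_\alpha(E)] \cdot [\bS_{(2\nu)^T}(E)],
\]
and the Euler characteristic of the minimal free resolution produced by Theorem~\ref{thm:MlambdaTor}, combined with the plethysm from Proposition~\ref{prop:plethysms}, gives the same expression for $[M_\lambda]$. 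Hence the two $\GL(E)$-characters agree, which forces $\phi$ to be an isomorphism once it is known to be a well-defined surjection. I expect the main obstacle to be the factorization step in the construction of $\phi$: pinning down that the specific $\bS_\mu(E)$ generating the first syzygy of $M_\lambda$ (see Corollary~\ref{cor:Mlambda-reldeg}) lies in $\ker \psi$ requires a careful geometric identification on $X$, since the free module $\bS_\lambda(E) \otimes S^\bullet(\bigwedge^2 E)$ generally contains several copies of $\bS_\mu(E)$ in the relevant bidegree, and only the specific copy coming from the Grassmannian relation must vanish.
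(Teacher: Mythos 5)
Your character computation is exactly the one the paper uses: you expand $S^\bullet(E \otimes U)$ via Cauchy, apply the specialized branching formula of Lemma~\ref{lem:spo-spec} combined with \eqref{eqn:infinite-2}, and compare with the Euler characteristic of the resolution from Theorem~\ref{thm:MlambdaTor} together with the plethysm $[S^\bullet(\bigwedge^2 E)] = \sum_\mu [\bS_{(2\mu)^T}(E)]$. That is precisely the paper's argument, and it is already a complete proof of this lemma.

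Where you diverge is in insisting on first constructing an explicit $S^\bullet(\bigwedge^2 E)$-linear map $\phi$ and then using the character equality only to promote a surjection to an isomorphism. That extra step is unnecessary here. The statement asks only for an isomorphism of $\gl(E)$-representations, and both $M_\lambda$ and $S^\bullet(E \otimes U)_\lambda$ decompose as (possibly infinite) direct sums of finite-dimensional irreducible polynomial $\GL(E)$-representations with finite multiplicities. In that semisimple setting, equality of characters alone forces an abstract isomorphism --- no map construction required. This is exactly why the paper splits the result into two lemmas: this lemma (character-level, $\gl(E)$-equivariant) and Lemma~\ref{lem:SwedgeModuleIso} immediately after (upgrading to an $S^\bullet(\bigwedge^2 E)$-module isomorphism, which is where your embedding $\bS_\lambda(E) \hookrightarrow S^\bullet(E\otimes U)_\lambda$ and the factorization argument through the Grassmannian actually belong).

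As a consequence, the ``main obstacle'' you flag --- pinning down which copy of $\bS_\mu(E)$ dies under $\psi$ --- is not an obstacle to proving the present lemma at all; it only becomes relevant when you want the stronger module-theoretic statement. If you reorganize your write-up to first conclude the $\gl(E)$-isomorphism directly from the character match, and only then attack the module structure as a separate step (following \cite[Proposition~4.22]{sam2013homology}), your proof becomes both correct and cleaner.
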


\begin{proof}
    Define the $S^\bullet (\bigwedge^2 E)$-module $M \coloneq \bigoplus_{\lambda} M_\lambda \otimes  \bS_{[\lambda]} (U)$ where the sum is over all admissible partitions $\lambda$. As in \cite[Lemma 4.21]{sam2013homology}, it suffices to prove that, for any partition $\theta$, the $\bS_\theta (E)$-multiplicity spaces of $M$ and $S^\bullet (E \otimes U)$ are isomorphic as $\Sp\bO (U)$-representations, which can be accomplished by looking at the respective characters; the computation is essentially identical, but for sake of completeness we reproduce it here. Applying the specialization homomorphism (Lemma~\ref{lem:spo-spec}) to the $\Sp\bO (2m | 1)$-branching formula \eqref{eqn:infinite-2} yields the character identity
    \[
      [\bS_\theta (U)] = \bigoplus_{\mu, \nu} (-1)^{i_{2m|1} (\nu)} c_{\nu , (2\mu)^T}^\theta [\bS_{[\tau_{2m|1} (\nu)]} (U)].
    \]
    On the other hand, there is an equality
    \begingroup\allowdisplaybreaks
    \begin{align*}
        [M_\lambda] &= [S^\bullet (\bigwedge^2 E)] \cdot \sum_i (-1)^i [\tor_i^{S^\bullet (\bigwedge^2 E)} (M_\lambda , \bC) ] \\
        &= \sum_{\mu} [\bS_{(2\mu)^T} (E)] \sum_{\substack{\nu \\ \tau_{2m|1} (\nu) = \lambda}} (-1)^{i_{2m|1} (\nu)} [\bS_{\nu} (E)] \\
        &= \sum_{\substack{\nu , \mu ,\theta \\ \tau_{2m|1} (\nu) = \lambda}} (-1)^{i_{2m|1} (\nu)} c^\theta_{\nu , (2 \mu)^T} [\bS_\theta (E)].
    \end{align*}
    \endgroup
    Thus restricting to the $\bS_\theta (E)$-multiplicity spaces yields the same character, whence the desired isomorphism follows.
\end{proof}

\begin{lemma}\label{lem:SwedgeModuleIso}
    There is an $S^\bullet (\bigwedge^2 E)$-module isomorphism $M_\lambda \to S^\bullet (E \otimes U)_\lambda$. 
\end{lemma}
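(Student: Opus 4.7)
My plan is to construct an explicit $S^\bullet(\bigwedge^2 E)$-module morphism $f : M_\lambda \to S^\bullet(E \otimes U)_\lambda$ and then deduce it is an isomorphism from the $\gl(E)$-character identification established in the preceding lemma.

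By construction, $M_\lambda$ (for $\ell(\lambda) \le m$) is a quotient of $\rH^0(X, S^\bullet(\bigwedge^2 E) \otimes \bS_\lambda(\cQ)) = S^\bullet(\bigwedge^2 E) \otimes \bS_\lambda(E)$ by the image of the $\bS_{(2,2)}$-relations, with a parallel description for $M_{\lambda^\sigma}$ involving $\bS_{(2,1)}(\cR)$ and $\cR$. Thus $f$ is determined by a $\gl(E)$-equivariant embedding $\iota_\lambda : \bS_\lambda(E) \hookrightarrow S^\bullet(E \otimes U)_\lambda$ whose $S^\bullet(\bigwedge^2 E)$-linear extension annihilates the defining relations. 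The canonical $\iota_\lambda$ I will use comes from the orthosymplectic Howe duality for $S^\bullet(E \otimes U)$ developed in \S\ref{sec:howe}: a distinguished copy of $\bS_\lambda(E) \otimes \bS_{[\lambda]}(U)$ sitting inside $S^\bullet(E \otimes U)$ yields $\iota_\lambda$ via Hom--adjunction.

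The main technical step will be verifying that the $S^\bullet(\bigwedge^2 E)$-linear extension $\tilde\psi : S^\bullet(\bigwedge^2 E) \otimes \bS_\lambda(E) \to S^\bullet(E \otimes U)_\lambda$ of $\iota_\lambda$ vanishes on the image of $\bS_{(2,2)}(E) \otimes \bS_\lambda(E) \hookrightarrow S^2(\bigwedge^2 E) \otimes \bS_\lambda(E)$, and on the $\bS_{(2,1)}$ analogue in the $\lambda^\sigma$ case. This reduces to showing that the composite $\bS_{(2,2)}(E) \otimes \iota_\lambda(\bS_\lambda(E)) \to S^\bullet(E \otimes U)$ (multiplication by elements of $S^2(\bigwedge^2 E)$ via $\omega$) lands outside the $\bS_{[\lambda]}(U)$-isotypic component. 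I expect this to follow from tracking $\Sp\bO(U)$-weights: the image should naturally be supported on $\bS_{[\mu]}(U)$-isotypic pieces with $|\mu|$ strictly larger than $|\lambda|$, so its projection back to $S^\bullet(E \otimes U)_\lambda$ will vanish. The hard part will be making this isotypic bookkeeping precise, and this is where the interaction between the orthosymplectic Howe duality and the $\bigwedge^2 E$-module structure on $S^\bullet(E \otimes U)$ becomes crucial.

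Once $f : M_\lambda \to S^\bullet(E \otimes U)_\lambda$ is constructed, it will be nonzero on the generating subspace $\bS_\lambda(E) \subset M_\lambda$, so its image contains the cyclic $S^\bullet(\bigwedge^2 E)$-submodule $N$ generated by $\iota_\lambda(\bS_\lambda(E))$. Since $M_\lambda$ is generated as an $S^\bullet(\bigwedge^2 E)$-module by $\bS_\lambda(E)$, we have a surjection $M_\lambda \twoheadrightarrow N \subseteq S^\bullet(E \otimes U)_\lambda$. The character identity from the preceding lemma then forces $[N] \le [S^\bullet(E \otimes U)_\lambda] = [M_\lambda]$ with equality iff $N = S^\bullet(E \otimes U)_\lambda$ and the surjection $M_\lambda \twoheadrightarrow N$ is also injective; since both inequalities must be equalities, $f$ is an isomorphism.
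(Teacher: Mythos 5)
The overall strategy---build a candidate $S^\bullet(\bigwedge^2 E)$-linear map extending a canonical copy of $\bS_\lambda(E)$, then close the argument with the character identity from the preceding lemma---is in the right spirit, and is indeed what the paper (by deferring to \cite[Proposition~4.22]{sam2013homology}) has in mind. However, there are three concrete gaps in the way you have filled in the details.

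First, the relations of $M_\lambda$ are misidentified. You treat $M_\lambda$ as the quotient of $S^\bullet(\bigwedge^2 E) \otimes \bS_\lambda(E)$ by a copy of $\bS_{(2,2)}(E)\otimes\bS_\lambda(E)$, but the $\bS_{(2,2)}$ in Definition~\ref{def:MlambdaMods} is evaluated on the tautological subbundle $\cR$, not on $E$, and $M_\lambda$ is defined as $\rH^0$ of the resulting cokernel sheaf. After pushing forward, Theorem~\ref{thm:MlambdaTor} together with Lemma~\ref{lem:tor1} (equivalently, Corollary~\ref{cor:Mlambda-reldeg}) shows that $\Tor_1^{S^\bullet(\bigwedge^2 E)}(M_\lambda,\bC)=\bS_\mu(E)$, a single irreducible with $\mu_1^T+\mu_2^T \ge 2m+3$, sitting in a higher internal degree. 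Already for $\lambda=0$ the relations are $\bS_{(2^{2m+2})}(E)$, not $\bS_{(2,2)}(E)$. Indeed $\bS_{(2,2)}(E)$ cannot in general annihilate $M_\lambda$: if it did, $M_0$ would be $S^\bullet(\bigwedge^2 E)/(\bS_{(2,2)}(E))$, which is only the correct module when $m=0$.

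Second, the mechanism you propose for showing the relations vanish---that the product ``lands outside the $\bS_{[\lambda]}(U)$-isotypic component''---cannot work as stated. The $S^\bullet(\bigwedge^2 E)$-action on $S^\bullet(E\otimes U)$ is by multiplication by $\Sp\bO(U)$-invariant elements, hence is $\Sp\bO(U)$-equivariant and automatically preserves the $\bS_{[\lambda]}(U)$-isotypic decomposition. The product of $\bS_{(2,2)}(E)$ (or of the correct $\bS_\mu(E)$) with $\iota_\lambda(\bS_\lambda(E))\otimes\bS_{[\lambda]}(U)$ lies entirely inside the $\bS_{[\lambda]}(U)$-isotypic piece; the question is whether it is zero there, which isotypic bookkeeping by itself does not resolve.

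Third, the final step's logic is incomplete. From the surjection $M_\lambda \twoheadrightarrow N$ and the inclusion $N\subseteq S^\bullet(E\otimes U)_\lambda$, together with $[M_\lambda]=[S^\bullet(E\otimes U)_\lambda]$, you only obtain $[N]\le[M_\lambda]$; there is nothing forcing equality. What is missing is the assertion that $\iota_\lambda(\bS_\lambda(E))$ actually generates $S^\bullet(E\otimes U)_\lambda$ as an $S^\bullet(\bigwedge^2 E)$-module, i.e., that $N = S^\bullet(E\otimes U)_\lambda$. This is true and can be extracted from the Littlewood-complex acyclicity: taking the $\bS_{[\lambda]}(U)$-multiplicity space of the Koszul complex $\bK_{E,U}$ identifies $\Tor_i^{S^\bullet(\bigwedge^2 E)}(S^\bullet(E\otimes U)_\lambda, \bC)$ with $\bigoplus_\mu\bS_\mu(E)\otimes\hom_{\Sp\bO(U)}(\bS_{[\lambda]}(U),\rH_i(L^\mu_\bullet(U)))$, and the homology theorem then gives $\Tor_0 = \bS_\lambda(E)$. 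With that surjectivity in hand, the character equality makes the map an isomorphism---but this is exactly the step your write-up takes for granted.
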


\begin{proof}
    The proof is again essentially identical to that given in \cite[Proposition 4.22]{sam2013homology}.
\end{proof}

To conclude this section, we prove the analogue of the results of \cite{sam2013homology} for $\Sp\bO (2m|1)$. The proof is again a balancing argument that leverages that there are multiple different ways to compute Tor, combined with our identification of the modules $M_\lambda$ as the $\Sp\bO (U)$-multiplicity modules of $S^\bullet (E \otimes U)$.

\begin{theorem}
  Let $\lambda$ be a partition and $L^\lambda_\bullet (U)$ the orthosymplectic Littlewood complex. Then
    $$\rH_i (L_\bullet^\lambda (U)) = \begin{cases}
        \bS_{[\tau_{2m|1} (\lambda)]} (U) & \text{if} \ i = i_{2m|1} (\lambda), \\
        0 & \text{otherwise.}
    \end{cases}$$
\end{theorem}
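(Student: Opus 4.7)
The plan is to interpret $\rH_\bullet(L^\lambda_\bullet(U))$ as an isotypic piece of a Tor group and then apply Theorem~\ref{thm:MlambdaTor}. Fix a vector space $E$ with $\dim E \geq \ell(\lambda)$. Since $\bigwedge^\bullet(\bigwedge^2 E)$ is the standard Koszul resolution of $\bC$ over $S^\bullet(\bigwedge^2 E)$, the complex $\bK_{E,U} = S^\bullet(E\otimes U) \otimes_{S^\bullet(\bigwedge^2 E)} \bigwedge^\bullet(\bigwedge^2 E)$ computes
\[
  \rH_i(\bK_{E,U}) = \Tor_i^{S^\bullet(\bigwedge^2 E)}(S^\bullet(E\otimes U), \bC)
\]
as a $\GL(E) \times \Sp\bO(U)$-representation. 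From the decomposition $\bK_{E,U} \cong \bigoplus_\mu \bS_\mu(E) \otimes L^\mu_\bullet(U)$ recorded in \S\ref{sec:LittlewoodCons}, the $\bS_\lambda(E)$-isotypic component of this homology is precisely $\bS_\lambda(E) \otimes \rH_i(L^\lambda_\bullet(U))$, so it suffices to identify that component of the Tor.

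Next, by semisimplicity of the finite-dimensional $\Sp\bO(U)$-representations appearing in $S^\bullet(E\otimes U)$ together with Lemma~\ref{lem:SwedgeModuleIso}, we have an $\Sp\bO(U) \times S^\bullet(\bigwedge^2 E)$-equivariant isomorphism
\[
  S^\bullet(E\otimes U) \cong \bigoplus_{\mu \ \text{admissible}} M_\mu \otimes \bS_{[\mu]}(U).
\]
Taking $\Tor_i(-, \bC)$ term by term and substituting the formula from Theorem~\ref{thm:MlambdaTor} yields
\[
  \Tor_i^{S^\bullet(\bigwedge^2 E)}(S^\bullet(E\otimes U), \bC) \cong \bigoplus_{\mu \ \text{admissible}} \ \bigoplus_{\substack{\tau_{2m|1}(\alpha)=\mu \\ i_{2m|1}(\alpha)=i}} \bS_\alpha(E) \otimes \bS_{[\mu]}(U).
\]

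To conclude, extract the $\bS_\lambda(E)$-isotypic component of both sides. The summand $\bS_\alpha(E)$ contributes only when $\alpha = \lambda$, which forces $\mu = \tau_{2m|1}(\lambda)$ and $i = i_{2m|1}(\lambda)$. If $i_{2m|1}(\lambda) < \infty$, this produces a unique contribution of $\bS_{[\tau_{2m|1}(\lambda)]}(U)$ in homological degree $i_{2m|1}(\lambda)$ and nothing elsewhere; if $i_{2m|1}(\lambda) = \infty$, no such $\alpha$ exists and the homology vanishes in every degree. Either case matches the claimed formula. The substantive content is already packaged in Theorem~\ref{thm:MlambdaTor} and Lemma~\ref{lem:SwedgeModuleIso}; the only subtlety is making sure both sides of the Tor identification carry compatible $\GL(E) \times \Sp\bO(U) \times S^\bullet(\bigwedge^2 E)$-equivariance, which is immediate from the way $\bK_{E,U}$, $M_\mu$, and the isotypic decomposition were constructed, so I do not expect a real obstacle beyond careful bookkeeping.
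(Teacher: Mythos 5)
Your argument is correct and is essentially the paper's own proof: the paper likewise takes homology of the Koszul complex $\bK_{E,U}$, identifies $S^\bullet(E\otimes U) \cong \bigoplus_\mu M_\mu \otimes \bS_{[\mu]}(U)$ via Lemma~\ref{lem:SwedgeModuleIso}, applies Theorem~\ref{thm:MlambdaTor}, and compares $\bS_\lambda(E)$-isotypic components. The only cosmetic difference is that you explicitly frame the left side as $\Tor_i^{S^\bullet(\bigwedge^2 E)}(S^\bullet(E\otimes U),\bC)$, which is implicit in the paper's notation.
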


\begin{proof}
    Recall that by definition the orthosymplectic Littlewood complexes are defined via the equality
    \[
      \bigwedge^\bullet (\bigwedge^2 E) \otimes_{S^\bullet (\bigwedge^2 E)} S^\bullet (E \otimes U) =
      \bigoplus_{\lambda} \bS_\lambda (E) \otimes_{S^\bullet (\bigwedge^2 E)} L^\lambda_\bullet (U).
    \]
    On the other hand, Lemma~\ref{lem:SwedgeModuleIso} implies that there is an $S^\bullet (\bigwedge^2 E)$-module isomorphism $S^\bullet (E \otimes U) \cong \bigoplus_{\mu} M_\mu \otimes \bS_{[\mu]}(U)$. Taking $i$th homology thus yields the isomorphism
    \[
      \bigoplus_{\mu} \tor_i (M_\mu , \bC) \otimes \bS_{[\mu]} (U) \cong \bigoplus_{\lambda} \bS_\lambda (E) \otimes \rH_i (L^\lambda_\bullet (U)).
    \]
    Employing Theorem \ref{thm:MlambdaTor} and comparing $\bS_\lambda (E)$-multiplicity spaces in the above equality, the theorem follows.
  \end{proof}

This allows us to improve the statement in Proposition~\ref{prop:superRegSeq}:
  
  \begin{corollary} \label{cor:adm-acyclic}
    If $\lambda$ is admissible, then $L^\lambda_\bullet(U)$ is acyclic.
  \end{corollary}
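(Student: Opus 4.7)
The plan is to derive this as an immediate consequence of the homology computation of the Littlewood complex (the theorem just preceding Corollary~\ref{cor:adm-acyclic}) together with the behavior of the modification rule on admissible partitions. Recall that the theorem identifies $\rH_i(L^\lambda_\bullet(U))$ as $\bS_{[\tau_{2m|1}(\lambda)]}(U)$ concentrated in cohomological degree $i = i_{2m|1}(\lambda)$ (and zero in every other degree). So the problem reduces to showing that $i_{2m|1}(\lambda) = 0$ for admissible $\lambda$, since acyclicity of a complex supported in nonnegative degrees means that all positive-degree homology vanishes.

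To see that $i_{2m|1}(\lambda) = 0$, I would appeal directly to step (1) of the border-strip version of the modification rule: by definition, when $\lambda$ is admissible we set $i_{2m|1}(\lambda) \coloneq 0$ and $\tau'_{2m|1}(\lambda) \coloneq \lambda$ without removing any border strips. Since zero border strips are removed, the parity adjustment in step (4) is trivial, and we also obtain $\tau_{2m|1}(\lambda) = \lambda$. Feeding these values back into the homology formula yields $\rH_i(L^\lambda_\bullet(U)) = 0$ for $i > 0$ and $\rH_0(L^\lambda_\bullet(U)) = \bS_{[\lambda]}(U)$, which is precisely the claim that $L^\lambda_\bullet(U)$ is acyclic.

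There is essentially no obstacle here, since the hard work was already done in proving the homology theorem; the only thing to verify is that the admissibility hypothesis matches the base case of the modification rule, which is immediate from the definitions. The corollary thus strengthens Proposition~\ref{prop:superRegSeq}: the length restriction $\ell(\lambda) \le m$ is replaced by the weaker condition $\lambda_1^T + \lambda_2^T \le 2m+1$, consistent with Remark~\ref{rmk:admissible}.
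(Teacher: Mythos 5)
Your proof is correct and takes exactly the approach the paper intends: the corollary is an immediate consequence of the homology theorem just proved, combined with the base case of the modification rule, which sets $i_{2m|1}(\lambda) = 0$ whenever $\lambda$ is admissible.
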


\section{The Howe dual pair $(\Sp\bO (2m|1) , \so (2n))$} \label{sec:howe}

In this section, we establish that the orthosymplectic Lie group $ \Sp\bO(2m|1) $ and the orthogonal Lie algebra $ \so (2n) $ form a Howe dual pair with respect to the symmetric superalgebra \( S^\bullet(\bC^{2m|1} \otimes \bC^{n}) \). The key takeaway here is that this will show that the $\GL(n)$-action defined on the $\Sp\bO(2m|1)$-multiplicity spaces $M_\lambda$ considered in the previous section extends to an action of $\fso(2n)$.

This result provides a key structural insight that will be used to deduce an essential character equality. The proof presented here is a natural extension of the classical Howe duality framework, adapted to the superalgebraic setting of \( \Sp\bO (2m|1) \). See \cite[\S 8.2]{goodman2004multiplicity} for an exposition of the following for the analogous case of $\Sp(2m)$.

\subsection{Setup}
As before, let $U$ be a $2m|1$-dimensional vector superspace equipped with a (super) symplectic form $\omega$ and let $E$ be a $n$-dimensional vector space. Consider the affine space
\[
  X = \hom(E,U)
\]
under the action of $\fgl(E) \times \Sp\bO(U)$. Let $\phi$ denote a general element of $X$. Fix a basis $e_1, \dots, e_n$ for $E$ and fix a homogeneous basis $v_{-m},\dots,v_m$ for $U$ so that $v_0$ is odd and so that the pairing is defined by $\omega(v_{-i}, v_j) = -\omega(v_j,v_{-i})=\delta_{i,j}$ for $i,j>0$ and $\omega(v_0,v_0)=1$. Let $\phi_{j,i}$ be the coefficient of $v_j$ in $\phi(e_i)$.
Let $r_{i,j}$ be the quadratic polynomial $\omega(\phi(e_i), \phi(e_j))$. Explicitly, this can be written as
\[
  r_{i,j} = \phi_{0,i} \phi_{0,j} + \sum_{a=1}^m (\phi_{-a,i} \phi_{a,j} - \phi_{-a,j}\phi_{a,i}).
\]
Let $\Delta_{i,j}$ be the second-order differential operator corresponding to $r_{i,j}$:
\[
  \Delta_{i,j} = \partial_{0,i} \partial_{0,j} + \sum_{a=1}^m (\partial_{-a,i} \partial_{a,j} - \partial_{-a,j}\partial_{a,i}).
\]

Note that $\phi_{0,i}$ are odd variables so they skew-commute. That also means the Weyl algebra relation becomes $\partial_{0,i} \phi_{0,j} + \phi_{0,j} \partial_{0,i} = \delta_{i,j}$ (instead of subtracting). Hence $r_{i,j}=-r_{j,i}$ and $\Delta_{i,j}=-\Delta_{j,i}$. Also, define
\[
E_{i,j} = \sum_{a=-m}^m \phi_{a,i} \partial_{a,j} + \frac{2m-1}{2} \delta_{i,j}.
\]
We remark that $2m-1$ is the supertrace of the identity operator on $U$, which is why it shows up above. Given homogeneous operators $f,g$, we use the bracket notation $[f,g]$ to denote the supercommutator, i.e.,
\[
  [f,g] = fg - (-1)^{|f||g|} gf.
\]
We will need the following supercommutator equalities:

\begin{lemma}\label{lem:commutatorIdentities}
    With operators as defined above, there are supercommutator equalities:
    \begin{subeqns}
      \begin{align}
      [r_{i,j} , \Delta_{k,\ell}] &= - \delta_{i,k} E_{j,\ell} + \delta_{i,\ell} E_{j,k} + \delta_{k,j} E_{i , \ell} - \delta_{j,\ell} E_{i,k}, \label{eqn:comm1} \\
    [E_{i,j} , r_{k,\ell}] &= \delta_{k,j} r_{i,\ell} - \delta_{\ell , j} r_{i,k}, \label{eqn:comm2} \\
    [E_{i,j} , \Delta_{k,\ell}] &= - \delta_{i,k} \Delta_{j ,\ell} + \delta_{i,\ell} \Delta_{j,k}, \label{eqn:comm3} \\
    [E_{i,j} , E_{k,\ell}] &= \delta_{j,k} E_{i,\ell} - \delta_{i,\ell} E_{k,j} \label{eqn:comm4}.
    \end{align}
  \end{subeqns}
\end{lemma}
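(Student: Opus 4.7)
\medskip

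\noindent\textbf{Proof plan.} Since each of $r_{i,j}$, $\Delta_{i,j}$, and $E_{i,j}$ is a sum of products of two operators of like parity, all four are even; hence the supercommutators $[\cdot,\cdot]$ below are ordinary commutators. The plan is to verify each identity by expanding via the super Leibniz rule
\[
[AB,C] = A[B,C] + (-1)^{|B||C|}[A,C]B, \qquad [A,BC] = [A,B]C + (-1)^{|A||B|}B[A,C],
\]
and then collapsing using the only nontrivial super Weyl-algebra relations
\[
[\partial_{a,i},\phi_{b,j}] = \delta_{a,b}\delta_{i,j}, \qquad [\phi_{a,i},\phi_{b,j}] = [\partial_{a,i},\partial_{b,j}] = 0,
\]
where the last two hold with appropriate sign when the variables involved are $\phi_{0,\cdot}$ or $\partial_{0,\cdot}$ (i.e.\ odd). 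In fact a uniform way to organize the computation is to write the symplectic and orthogonal parts together as a single sum $r_{i,j} = \sum_{a} \epsilon_a \phi_{-a,i}\phi_{a,j}$, $\Delta_{k,\ell} = \sum_b \epsilon_b \partial_{-b,k}\partial_{b,\ell}$ with signs $\epsilon_a$ chosen so that the formulas for $r_{i,j}$ and $\Delta_{k,\ell}$ in the statement are recovered; then each bracket becomes a double sum whose terms collapse via the Weyl relation.

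I would carry out \eqref{eqn:comm4} first since it is the cleanest. The relation $[E_{i,j},E_{k,\ell}] = \delta_{j,k}E_{i,\ell}-\delta_{i,\ell}E_{k,j}$ is the standard $\mathfrak{gl}_n$ relation, and is a routine two-line application of the Weyl relation to $[\phi_{a,i}\partial_{a,j},\phi_{b,k}\partial_{b,\ell}]$ summed over $a,b$; the scalar shift $\tfrac{2m-1}{2}\delta_{i,j}$ sits in the center of the Weyl algebra and plays no role. Next I would do \eqref{eqn:comm2} and \eqref{eqn:comm3}, which are really the two halves of a single computation: they say that under the $\mathfrak{gl}_n$-action by the $E_{i,j}$, the $r_{k,\ell}$ transform as vectors in $\bigwedge^2\bC^n$ (consistent with $r_{k,\ell}=-r_{\ell,k}$) and the $\Delta_{k,\ell}$ as vectors in its dual. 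Both follow by applying the Leibniz rule once: the derivation $[E_{i,j},\cdot]$ only sees the $\partial_{a,j}$ in the first factor and the $\phi_{a,i}$ in the second, so the sum over $a$ in $E_{i,j}$ collapses via the Weyl relation to replace a $\phi_{a,k}$ by $\delta_{k,j}\phi_{a,i}$ (respectively a $\partial_{a,k}$ by $-\delta_{i,k}\partial_{a,j}$), and after relabeling one recovers the stated right-hand sides.

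The main work is \eqref{eqn:comm1}, which expresses $[r,\Delta]\in \mathrm{span}(E_{i,j})$ and is the heart of the Howe pair statement. For this I would expand
\[
[r_{i,j},\Delta_{k,\ell}] = \sum_{a,b}\epsilon_a\epsilon_b\,[\phi_{-a,i}\phi_{a,j},\,\partial_{-b,k}\partial_{b,\ell}]
\]
and apply the super Leibniz rule twice so that every surviving term has one $\phi$ and one $\partial$ contracted. The Weyl relation forces $\pm a = \pm b$ in each surviving term, so the double sum collapses to a single sum, which reassembles into the four $E$-terms on the right. The scalar correction $\tfrac{2m-1}{2}\delta_{i,j}$ in the definition of $E_{i,j}$ is exactly what is needed to match the double-contraction constants produced when both Weyl contractions fire simultaneously; this is where the supertrace of the identity on $U$ enters.

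The main obstacle I anticipate is sign bookkeeping for the $\phi_{0,\cdot}$ and $\partial_{0,\cdot}$ contributions, because these are odd and have to be moved past each other with a minus sign, and because the sign conventions in the Weyl relation $\partial_{0,i}\phi_{0,j}+\phi_{0,j}\partial_{0,i}=\delta_{i,j}$ differ from the even case. I would handle this by first verifying all four identities in the purely symplectic case (i.e.\ dropping the $\phi_{0,\cdot}\phi_{0,\cdot}$ and $\partial_{0,\cdot}\partial_{0,\cdot}$ terms, reducing to a standard $\mathfrak{sp}(2m)$ Howe-dual computation as in \cite[\S 8.2]{goodman2004multiplicity}), and then separately checking that the odd contributions, treated via the super Leibniz rule, produce terms of exactly the same shape with the same total sign. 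Once these two checks are in place, the four identities assemble cleanly.
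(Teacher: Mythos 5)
Your plan is correct and follows essentially the same route as the paper: the paper's \S\ref{ss:calc} proof likewise applies a double super-Leibniz expansion (stated as Lemma~\ref{lem:quad}) to reduce each bracket to Weyl-algebra contractions, handles the odd ($\phi_{0,\cdot},\partial_{0,\cdot}$) and even ($a\neq 0$) contributions separately, and recovers the shift $\tfrac{2m-1}{2}\delta_{i,j}$ in $E_{i,j}$ from the double-contraction scalars in \eqref{eqn:comm1}. The only difference is cosmetic notation (your proposed unified sum $\sum_a\epsilon_a\phi_{-a,i}\phi_{a,j}$ versus the paper's explicit split into the $a=0$ term and the $\sum_{a=1}^m$ terms); both are fine, and your observation that the even and odd pieces decouple (no cross-contractions) is exactly why the paper can treat them separately.
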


The proofs of these identities are moved to \S\ref{ss:calc} to avoid disrupting the exposition.

Let $\fn_-$ be the span of the $r_{i,j}$, let $\fn_+$ be the span of the $\Delta_{i,j}$, and let $\fk$ be the span of the $E_{i,j}$ in the Weyl algebra of $X$. Then $\fk \cong \fgl(n)$ as Lie algebras. These operators commute with the $\Sp\bO(U)$ action. From the previous lemma, they span a Lie subalgebra isomorphic to $\fso(E \oplus E^*)$, which has a parabolic decomposition
\begin{align} \label{eqn:parabolic}
\fso(E \oplus E^*) = \fn_- \oplus \fk \oplus \fn_+.
\end{align}
Our choice of Cartan subalgebra is the span of $E_{1,1},\dots,E_{n,n}$. Note that the choice of $\fn_\pm$ is the opposite of the standard convention: $r_{i,j}$ spans the $\eps_i + \eps_j$ root space where $\eps_i(E_{j,j})=\delta_{i,j}$. But it should now correspond to a negative root, and so the correction is to reverse and negate the weights which instead corresponds to the ordered basis $-E_{n,n},\dots,-E_{1,1}$ for the Cartan subalgebra, so that the weights $\mu$ are given by the sequence $(-\mu(E_{n,n}),\dots,-\mu(E_{1,1}))$.

Thus $\fso(E \oplus E^*)$ acts on each multiplicity space of $\Sp\bO(U)$ on $\bC[X]$, and this action is irreducible. This is proven in \cite[\S 8.1]{goodman2004multiplicity} in the classical case. For our setting, we need to adapt \cite[Proposition 5.12]{cheng2012dualities}, which does not include $G = \Sp\bO(U)$. However, its finite-dimensional representations are semisimple \cite[Corollary 2.33]{cheng2012dualities}, and that is all that is used in the proof.

As before, if $\lambda$ is an admissible partition (i.e., $\lambda_1^T + \lambda_2^T \le 2m+1$), we write $\bS_{[\lambda]}(U)$ for the irreducible representation of $\Sp\bO(U)$ with highest weight $\lambda$. We note that if $\bS_{[\lambda]}(U)$ is a summand of the Schur functor $\bS_\mu(U)$, then $\lambda \subseteq \mu$ and that it appears with multiplicity 1 if $\lambda=\mu$ (by \eqref{eqn:infinite-2}). Suppose that $\bS_{[\lambda]}(U)$ is a summand of $\bC[X]$. From the Cauchy identity, we have a decomposition
\[
  \bC[X] = S^\bullet(U \otimes E) \cong \bigoplus_\mu \bS_{\mu}(U) \otimes \bS_{\mu}(E)
\]
(using that $U \cong U^*$ via its super symplectic form). From this, we see that $\lambda_i=0$ if $i>n$. Define
the length $n$ sequence
\[
  \zeta' = -\frac12 (2m-1, 2m-1, \dots, 2m-1) = (\frac12 - m, \dots, \frac12 - m).
\]
From our discussion above, the highest weight of the action of $\fgl(E)$ on the $\bS_{[\lambda]}(U)$-multiplicity space of $\bC[X]$ is 
\[
\zeta'-\lambda^{\rm op} = (-\lambda_n- \frac{2m-1}{2}, -\lambda_{n-1}- \frac{2m-1}{2}, \dots, -\lambda_1 - \frac{2m-1}{2})
\]
(if $n>m$, then we use the convention that $\lambda_i=0$ for $i>m$).

Summarizing the discussion so far gives the following result.

\begin{proposition} \label{prop:spo-howe}
    We have an $\Sp\bO(U) \times \fso(E \oplus E^*)$-equivariant decomposition
\begin{align*}
  \bC[X] = S^\bullet(U \otimes E) = \bigoplus_\lambda \bS_{[\lambda]}(U) \otimes L_{\fso(E \oplus E^*)}(\zeta' - \lambda^{\rm op})
\end{align*}
where $L_\fg(\mu)$ denotes the irreducible $\fg$-representation with highest weight $\mu$, and where the sum is over all partitions $\lambda$ such that $\lambda_1^T + \lambda_2^T \le 2m+1$ and $\lambda_1^T \le n$.
\end{proposition}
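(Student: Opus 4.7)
The plan is to adapt the classical $(\GL_n,\GL_{2m})$ Howe-duality proof to the orthosymplectic setting; the key input beyond the classical argument is complete reducibility of finite-dimensional $\Sp\bO(U)$-representations \cite[Corollary 2.33]{cheng2012dualities}. First I would verify that the operators $r_{i,j}$, $\Delta_{i,j}$, $E_{i,j}$ really span a copy of $\fso(E\oplus E^*) \cong \fso(2n)$ realized via its Siegel parabolic: the antisymmetry $r_{i,j}=-r_{j,i}$ and $\Delta_{i,j}=-\Delta_{j,i}$ together with the commutator relations of Lemma~\ref{lem:commutatorIdentities} show $\fn_+ \cong \bigwedge^2 \bC^n$ and $\fn_- \cong \bigwedge^2(\bC^n)^*$ as $\fk\cong\fgl_n$-modules, with $[\fn_-,\fn_+]\subseteq\fk$ and $[\fn_\pm,\fn_\pm]=0$. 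Because each of these operators is built from $\omega$, they all commute with the $\Sp\bO(U)$-action, so $\fso(E\oplus E^*)$ preserves every $\Sp\bO(U)$-isotypic component of $\bC[X]$.

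Next I would compute the $\fk$-character of each multiplicity space $W_\lambda \coloneqq \hom_{\Sp\bO(U)}(\bS_{[\lambda]}(U),\bC[X])$. Combining the $\fgl(U)\times\fgl(E)$ Cauchy identity $S^\bullet(U\otimes E) = \bigoplus_\mu \bS_\mu(U)\otimes \bS_\mu(E)$ with the $\Sp\bO(U)$-branching \eqref{eqn:infinite-2} gives
\[
  W_\lambda = \bigoplus_\mu \Bigl(\sum_\nu c^\mu_{\lambda,(2\nu)^T}\Bigr)\bS_\mu(E).
\]
In particular $\bS_\lambda(E)$ occurs in $W_\lambda$ with multiplicity one (taking $\mu=\lambda$, $\nu=\emptyset$), and every other $\fk$-type $\bS_\mu(E)$ in $W_\lambda$ has $|\mu|>|\lambda|$ and therefore sits in strictly higher polynomial degree. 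Since each $\Delta_{i,j}$ lowers polynomial degree by $2$ while preserving $W_\lambda$, it must annihilate the minimal subspace $\bS_\lambda(E)\subset W_\lambda$. Any $\fk$-highest weight vector $v$ in this $\bS_\lambda(E)$ is therefore a full $\fso(E\oplus E^*)$-highest weight vector; computing its $E_{i,i}$-eigenvalues (including the $(2m-1)/2$ shift) and translating into the reversed-and-negated convention described before the proposition yields exactly the weight $\zeta'-\lambda^{\mathrm{op}}$.

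It remains to show that $W_\lambda$ is irreducible as an $\fso(E\oplus E^*)$-module, and this is the step I expect to be the main obstacle. The argument of \cite[\S 8.2]{goodman2004multiplicity}, or equivalently \cite[Proposition 5.12]{cheng2012dualities} for the other classical superdual pairs, shows in those cases that $W_\lambda = \uU(\fn_-)\cdot v$ is a simple highest weight module, relying on complete reducibility of the action of the ``big'' group. For $\Sp\bO(U)$ this complete reducibility is precisely \cite[Corollary 2.33]{cheng2012dualities}, so the same proof goes through verbatim, yielding $W_\lambda \cong L_{\fso(E\oplus E^*)}(\zeta'-\lambda^{\mathrm{op}})$ and hence the desired decomposition. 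The constraints $\lambda_1^T + \lambda_2^T \le 2m+1$ and $\lambda_1^T \le n$ are then automatic: the first is admissibility of $\lambda$ as an $\Sp\bO(U)$-weight, and the second follows from $\bS_\lambda(E) \ne 0$, since any $\bS_\mu(E)$ appearing in $W_\lambda$ has $\lambda \subseteq \mu$.
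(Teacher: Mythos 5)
Your proposal follows essentially the same route as the paper: the decomposition \eqref{eqn:parabolic} built from Lemma~\ref{lem:commutatorIdentities}, the Cauchy identity together with the branching rule \eqref{eqn:infinite-2} to locate the lowest $\fk$-type and read off the highest weight $\zeta'-\lambda^{\rm op}$, and the adaptation of \cite[Proposition 5.12]{cheng2012dualities} via \cite[Corollary 2.33]{cheng2012dualities} (semisimplicity of finite-dimensional $\Sp\bO(U)$-modules) to get irreducibility of each multiplicity space. You spell out the degree argument for why $\Delta_{i,j}$ annihilates the lowest $\fk$-type a bit more explicitly than the paper does, but the argument is the same.
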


\begin{remark} \label{rmk:iso-mult}
  In order to make the connection to the previous section more clear, note that there is an isomorphism $M_\lambda \cong L_{\fso(E \oplus E^*)}(\zeta' - \lambda^{\rm op})$. Thus these Lie algebra representations are precisely the $\Sp \bO (U)$-multiplicity spaces of $S^\bullet (E \otimes U)$ (as in Lemma \ref{lem:SwedgeModuleIso}).
\end{remark}

\subsection{Calculations} \label{ss:calc}

We begin by stating the following standard formula for supercommutators, which will be used repeatedly.

\begin{lemma} \label{lem:quad}
Let $R$ be an associative superalgebra. Given homogeneous elements $a,b,c,d \in R$, we have
\[
  [ab,cd] = a [b,c] d + (-1)^{|b| \cdot |c|} [a,c] b d + (-1)^{(|a|+|b|)|c|} c a [b,d] + (-1)^{|a| \cdot |c| + |b| (|d| + |c|)} c [a,d] b.
\]
\end{lemma}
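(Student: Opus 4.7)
This is a purely formal identity in an arbitrary associative superalgebra, and the plan is to derive it by two applications of the super-Leibniz rule for the supercommutator. Specifically, I will use
\[
  [x, yz] = [x,y]z + (-1)^{|x|\,|y|}\, y\,[x,z], \qquad
  [xy, z] = x\,[y,z] + (-1)^{|y|\,|z|}\, [x,z]\,y,
\]
each of which is immediate from $[x,y] = xy - (-1)^{|x|\,|y|}yx$.

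First I will apply the first identity to $[ab, cd]$ with $x = ab$, $y = c$, $z = d$ to split the outer bracket as
\[
  [ab, cd] = [ab, c]\,d + (-1)^{(|a|+|b|)\,|c|}\, c\,[ab, d].
\]
Then I will apply the second identity to each of $[ab, c]$ and $[ab, d]$ to rewrite them as
\[
  [ab, c] = a[b,c] + (-1)^{|b|\,|c|}[a,c]b, \qquad
  [ab, d] = a[b,d] + (-1)^{|b|\,|d|}[a,d]b,
\]
and substitute back. Distributing the outer factors produces exactly the four claimed terms; in particular the sign on $c[a,d]b$ collects as $(-1)^{(|a|+|b|)|c| + |b|\,|d|}$, which equals $(-1)^{|a||c| + |b|(|c|+|d|)}$, matching the statement.

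There is no conceptual obstacle; the only care required is bookkeeping the parity exponents. As a sanity check one may expand both sides directly from the definition, which yields $abcd - (-1)^{(|a|+|b|)(|c|+|d|)}\,cdab$ on the left, while on the right six intermediate monomials cancel in pairs leaving the same expression. The most error-prone step is confirming that the signs in the two Leibniz rules above are themselves correct, so I would record those derivations first and then apply them mechanically.
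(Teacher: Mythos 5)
Your proof is correct. The paper itself states Lemma~\ref{lem:quad} without proof, calling it a ``standard formula,'' and your two-step application of the super-Leibniz rules (once to peel off the right factor $cd$, once more to peel apart $ab$ in each of the resulting inner brackets) is exactly the standard derivation; the sign bookkeeping, including the final collection $(-1)^{(|a|+|b|)|c| + |b||d|} = (-1)^{|a||c|+|b|(|c|+|d|)}$, is right, and the sanity check by brute expansion also works as you describe.
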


    \textbf{Proof of \eqref{eqn:comm1}:} From Lemma~\ref{lem:quad}, we get:
    \begin{align*}
        [\phi_{0,i} \phi_{0,j} , \partial_{0,k} \partial_{0,\ell}] =& \delta_{j,k} \phi_{0,i} \partial_{0,\ell} - \delta_{i,k} \phi_{0,j} \partial_{0,\ell} + \delta_{j,\ell} \partial_{0,k} \phi_{0,i} - \delta_{i,\ell} \partial_{0,k} \phi_{0,j} \\
      =& \delta_{j,k} \phi_{0,i} \partial_{0,\ell} - \delta_{i,k} \phi_{0,j} \partial_{0,\ell} - \delta_{j , \ell} \phi_{0,i} \partial_{0,k} + \delta_{i,\ell} \phi_{0,j} \partial_{0,k} + \delta_{j,\ell} \delta_{i,k} - \delta_{i,\ell} \delta_{j,k},
    \end{align*}
    and for $a \ne 0$:
    \begin{align*}
      [\phi_{-a,i} \phi_{a,j} , \partial_{-a,k} \partial_{a,\ell}] =& -\delta_{i,k} \phi_{a,j} \partial_{a,\ell} - \delta_{j,\ell} \partial_{-a,k} \phi_{-a,i} \\
        =& -\delta_{i,k} \phi_{a,j} \partial_{a,\ell} - \delta_{j,\ell} \phi_{-a,i} \partial_{-a,k} - \delta_{j,\ell} \delta_{i,k}.
    \end{align*}
    Using this, we compute:
    \begingroup\allowdisplaybreaks
    \begin{align*}
        [r_{i,j} , \Delta_{k,\ell}] =& \delta_{j,k} \phi_{0,i} \partial_{0,\ell} - \delta_{i,k} \phi_{0,j} \partial_{0,\ell} - \delta_{j , \ell} \phi_{0,i} \partial_{0,k} + \delta_{i,\ell} \phi_{0,j} \partial_{0,k} + \delta_{j,\ell} \delta_{i,k} - \delta_{i,\ell} \delta_{j,k} \\
        &- \sum_{a=1}^m \left( \delta_{i,k} \phi_{a,j} \partial_{a,\ell} + \delta_{j,\ell} \phi_{-a,i} \partial_{-a,k} + \delta_{j,\ell} \delta_{i,k} \right) \\
        &+ \sum_{a=1}^m \left( \delta_{j,k} \phi_{a,i} \partial_{a,\ell} + \delta_{i,\ell} \phi_{-a,j} \partial_{-a,k} + \delta_{j,k} \delta_{i,\ell} \right) \\
        &+ \sum_{a=1}^m \left( \delta_{i,\ell} \phi_{a,j} \partial_{a,k} + \delta_{j,k} \phi_{-a,i} \partial_{-a,\ell} + \delta_{i,\ell} \delta_{j,k} \right) \\
        &- \sum_{a=1}^m \left( \delta_{j,\ell} \phi_{a,i} \partial_{a,k} + \delta_{i,k} \phi_{-a,j} \partial_{-a,\ell} + \delta_{j,\ell} \delta_{i,k} \right) \\
        =& - \delta_{i,k} \sum_{a = -m}^m \phi_{a,j} \partial_{a,\ell} + \delta_{i,\ell} \sum_{a=-m}^m \phi_{-a,j} \partial_{-a,k} + \delta_{j,k} \sum_{a=-m}^m \phi_{a,i} \partial_{a,\ell} \\
        &-\delta_{j,\ell} \sum_{a=-m}^m \phi_{-a,i} \partial_{-a,k} - (2m-1) \delta_{i,k} \delta_{j,\ell} + (2m-1) \delta_{j,k} \delta_{i,\ell} \\
        =& - \delta_{i,k} E_{j,\ell} + \delta_{i,\ell} E_{j,k} + \delta_{j,k} E_{i,\ell} - \delta_{j,\ell} E_{i,k}.
    \end{align*}
    \endgroup
    \textbf{Proof of \eqref{eqn:comm2}:} We first note the following commutator identities (below, $a \ne 0$):
    \begingroup\allowdisplaybreaks
    \begin{align*}
        [\phi_{0,i} \partial_{0,j} , \phi_{0,k} \phi_{0,\ell}] &= \delta_{j,k} \phi_{0,i} \phi_{0,\ell} - \delta_{j,\ell} \phi_{0,i} \phi_{0,k}, \\
        [\phi_{a,i} \partial_{a,j} , \phi_{-a,k} \phi_{a,\ell}] &= \delta_{j,\ell} \phi_{a,i} \phi_{-a,k} , \quad \text{and} \\
        [\phi_{-a,i} \partial_{-a,j} , \phi_{-a,k} \phi_{a,\ell} ] &= \delta_{j,k} \phi_{-a,i} \phi_{a,\ell} .
    \end{align*}
    \endgroup
    \begingroup\allowdisplaybreaks
    \begin{align*}
        [E_{i,j} , r_{k , \ell} ] &= \delta_{j,k} \phi_{0,i} \phi_{0,\ell} - \delta_{j,\ell} \phi_{0,i} \phi_{0,k} 
        + \sum_{a=1}^m \left( \delta_{j,\ell} \phi_{a,i} \phi_{-a,k} - \delta_{j,k} \phi_{a,i} \phi_{-a,\ell} \right) \\
        &+ \sum_{a=1}^m \left( \delta_{j,k} \phi_{-a,i} \phi_{a,\ell} - \delta_{j ,\ell} \phi_{-a,i} \phi_{a,k} \right) \\
        &= \delta_{j,k} r_{i,\ell} - \delta_{j,\ell} r_{i,k}.
    \end{align*}
    \endgroup
    \textbf{Proof of \eqref{eqn:comm3}:} We note the following analogous commutator identities:
    \begingroup\allowdisplaybreaks
    \begin{align*}
        [\phi_{0,i} \partial_{0,j} , \partial_{0,k} \partial_{0,\ell}] =& -\delta_{i,k} \partial_{0,j} \partial_{0,\ell} + \delta_{i,\ell} \partial_{0,j} \partial_{0,k}, \\
        [\phi_{a,i} \partial_{a,j} , \partial_{-a,k} \partial_{a,\ell} ] =& - \delta_{i,\ell} \partial_{-a,k} \partial_{a,j} , \quad \text{and} \\
        [\phi_{-a,i} \partial_{-a,j} , \partial_{-a,k} \partial_{a,\ell}] = & - \delta_{i,k} \partial_{-a,j} \partial_{a,\ell}.
    \end{align*}
    \endgroup
    A similar computation to the previous case yields:
    \begingroup\allowdisplaybreaks
    \begin{align*}
        [E_{i,j} , \Delta_{k,\ell}] =& -\delta_{i,k} \partial_{0,j} \partial_{0,\ell} + \delta_{i , \ell} \partial_{0,j} \partial_{0,k} 
        + \sum_{a=1}^m \left( -\delta_{i,\ell}  \partial_{-a,k} \partial_{a,j} + \delta_{i,k} \partial_{-a,\ell} \partial_{a,j} \right) \\ 
        &\qquad + \sum_{a=1}^m \left( -\delta_{i ,k} \partial_{-a,j} \partial_{a,\ell} + \delta_{i,\ell} \partial_{-a,j} \partial_{a,k} \right) \\
        =& - \delta_{i,k} \Delta_{j,\ell} + \delta_{i,\ell} \Delta_{j,k}.  
    \end{align*}
    \endgroup
    \textbf{Proof of \eqref{eqn:comm4}:} For any $a$, we have the following identity:
    \[
      [\phi_{a,i} \partial_{a,j} , \phi_{a,k} \partial_{a,\ell} ] = \delta_{j,k} \phi_{a,i} \partial_{a,\ell} - \delta_{i,\ell} \phi_{a,k} \partial_{a,j}
    \]
    which implies
    \begin{align*}
      [E_{i,j} , E_{k,\ell}] &= \sum_{a=-m}^m \left( \delta_{j,k} \phi_{a,i} \partial_{a,\ell} - \delta_{i,\ell} \phi_{a,k} \partial_{a,j} \right) \\
        &= \delta_{j,k} E_{i,\ell} - \delta_{i,\ell} E_{k,j}. 
    \end{align*}

\section{General odd-rank quadric hypersurfaces}\label{sec:generalOddRank}

Let $V$ be a $(2m+1)$-dimensional vector space equipped with a nondegenerate symmetric bilinear form $\beta$ and let $E$ be an $n$-dimensional vector space.

\subsection{The Lie algebra $\g_{V,E}$ as a parabolic subalgebra}

In this section, we reinterpret the Lie algebras \(\fg_{V,E}\) from the introduction as the nilpotent radical of a parabolic subalgebra within a larger orthogonal Lie algebra \(\fso(\bE)\). This perspective allows us to construct a parabolic Verma module \(M(\zeta)\), from which we define a quotient representation \(Z_{V,E}\). The character of \(Z_{V,E}\) will be a central focus of subsequent computations, as the explicit form of its equivariant Hilbert function is essential for the content of \cite{sam2024total}.

Define
\[
  \bE = E^* \oplus V \oplus E.
\]
We can extend $\beta$ to a nondegenerate symmetric bilinear form on $\bE$ via the following formula:
    \[
      \beta( (f, v, e), (f', v', e') ) = \beta(v,v') + f(e') + f'(e).
    \]
    We define a $\bZ$-grading on $\bE$ by
    \[
      \deg(E^*)=-1, \qquad \deg(V)=0, \qquad \deg(E)=1.
    \]
    This induces a $\bZ$-grading on $\fso(\bE) \cong \bigwedge^2 \bE$ (supported on $-2$ to $2$) with decomposition
    \[
      (\bigwedge^2 E^*) \oplus (E^* \otimes V) \oplus \begin{pmatrix} \fgl(E) \\ \times \\ \fso(V) \end{pmatrix} \oplus (E \otimes V) \oplus (\bigwedge^2 E).
    \]
    Note that $\fg_{V,E} = \fso(\bE)_{>0}$ is the positive portion of this algebra. The subalgebras $\fso(\bE)_{\ge 0}$ and $\fso(\bE)_{\le 0}$ are both examples of parabolic subalgebras and $\fso(\bE)_{>0}$ and $\fso(\bE)_{<0}$ are their respective nilpotent radicals.

 More generally, let $\fp$ be a parabolic subalgebra of a semisimple Lie algebra $\fg$, let $\fn = [\fp,\fp]$ be its nilpotent radical, and let $\fg_0$ be the Levi subalgebra of $\fp$, which is isomorphic to $\fp/\fn$. Given a finite-dimensional representation $F$ of $\fp$, we define the induction
    \[
      {\rm Ind}_\fp^\fg F = \rU(\fg) \otimes_{\rU(\fp)} F.
    \]
    By PBW, this is isomorphic (as a $\fg_0$-representation) to
    \[
      \rU(\fn) \otimes F
    \]
    If $F$ is a representation of $\fg_0$, we will treat it as a $\fp$-representation by pulling back along the surjection $\fp \to \fp/\fn \cong \fg_0$.

    For our purposes, we will take $\fg=\fso(\bE)$ and $\fp = \fso(\bE)_{\le 0}$ (then $\fg_0 = \fso(V) \times \fgl(E)$). We write weights of $\fso(\bE)$ as sequences of complex numbers $(\lambda_1,\dots,\lambda_{m+n})$. When restricted to $\fgl(E) \times \fso(V)$, we get two weights $(\lambda_1,\dots, \lambda_n)$ and $(\lambda_{n+1},\dots,\lambda_{m+n})$. If these are both dominant (i.e., $\lambda_1 \ge \cdots \ge \lambda_n$ and $\lambda_i - \lambda_{i+1} \in \bZ_{\ge 0}$ for $i<n$; $\lambda_{n+1} \ge \cdots \ge \lambda_{m+n} \ge 0$ and $\lambda_i \in \bZ_{\ge 0}$ for $i>n$), then the irreducible $\fgl(E) \times \fso(V)$-representation with highest weight $\lambda$ is finite-dimensional. Its induction is the parabolic Verma module with respect to $\fp$, and we will denote it $M(\lambda)$.

\begin{remark} \label{rmk:hw}
  Since we have chosen $\fso(\bE)_{\le 0}$ as our parabolic, there will be some reversal of conventions when speaking about Schur functors for $\fgl(E)$ (since standard conventions are to use the upper-triangular matrices in $\fgl(E)$ as the Borel subalgebra, but $\fso(\bE)_{\le 0} \cap \fgl(E)$ is the space of lower-triangular matrices). In particular, $M(\lambda) = {\rm Ind}_\fp^{\fso(\bE)} F$, where $F$ is the tensor product $\bC_{-\lambda_1} \otimes \bS_\alpha(E) \otimes F'$ where $\bC_a$ is $a$ times the trace character, $\alpha = (\lambda_1-\lambda_n, \lambda_1-\lambda_{n-1}, \dots, \lambda_1-\lambda_2, 0)$, and $F'$ is the irreducible representation of $\fso(V)$ with highest weight $(\lambda_{n+1},\dots,\lambda_{m+n})$.
\end{remark}

Define the weight
\begin{align} \label{eqn:weight}
\zeta =  ( \underbrace{\frac12 - m, \frac12 - m, \dots, \frac12 - m}_n, \underbrace{0, 0, \dots, 0}_m ).
\end{align}
We highlight that the weight $\zeta'$ defined in the previous section is the first $n$ entries of $\zeta$.

As in Remark~\ref{rmk:hw}, $M(\zeta) = {\rm Ind}_\fp^{\fso(\bE)} \bC_{m-\frac12}$. In particular, $M(\zeta) \cong \rU(\fg_{V,E})$ as a representation of $\fgl(E) \times \fso(V)$ (up to tensoring with the appropriate multiple of the trace function).

As explained in the introduction, there is a natural inclusion $S^2 E \subset \rU(\fg_{V,E})$. The following proposition is our first goal.

\begin{proposition} \label{prop:main}
  The left $\rU(\fg_{V,E})$-module generated by $S^2 E$ is an $\fso(\bE)$-subrepresentation of $M(\zeta)$.
\end{proposition}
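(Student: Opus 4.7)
The plan is to realize $I$ as the image of a homomorphism of parabolic Verma modules $\varphi \colon M(\mu) \to M(\zeta)$, which makes $I$ an $\fso(\bE)$-subrepresentation automatically. With the conventions of Remark~\ref{rmk:hw}, the source weight is $\mu = \zeta + 2 \eps_n$: in $\rU(\fg_{V,E})$, the highest weight vector of the $\fso(\bE)_0$-subrepresentation $S^2 E$ is $q \coloneq \sum_{k}(v_k \otimes e_n)^2$ for an orthonormal basis $\{v_k\}$ of $V$, and this element is $\fso(V)$-invariant (built from $\beta^{-1}$) with $\fgl(E)$-weight $2\eps_n$. The proposed image of the generator of $M(\mu)$ is thus $v \coloneq q \cdot 1_\zeta$.

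Producing $\varphi$ amounts to showing that $v$ is annihilated by the positive nilpotent $\fn$ of a Borel compatible with $\fp = \fso(\bE)_{\le 0}$. Annihilation by the Levi part is immediate from the choice of $q$, and annihilation by $\fso(\bE)_{-2}$ follows from annihilation by $\fso(\bE)_{-1}$ via $[\fso(\bE)_{-1}, \fso(\bE)_{-1}] = \fso(\bE)_{-2}$. The crux is therefore to show $\fso(\bE)_{-1} \cdot v = 0$. For $X = f \wedge w \in E^* \wedge V = \fso(\bE)_{-1}$, the identity $X \cdot 1_\zeta = 0$ reduces this to $[X, q] \cdot 1_\zeta = 0$. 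I would expand via Leibniz using the bracket formula
\[
  [f \wedge w, v_k \wedge e_n] = \beta(w, v_k)(f \wedge e_n) + f(e_n)(w \wedge v_k)
\]
in $\fso(\bE) \cong \bigwedge^2 \bE$, collapse the resulting sum using the orthonormal identity $\sum_k \beta(w, v_k) v_k = w$, and then push the Levi factors $f \wedge e_n$ and $w \wedge v_k$ that appear to the right past the remaining degree-$1$ elements so that they can be applied to $1_\zeta$. The output is a linear combination of vectors in $\rU(\fg_{V,E}) \cdot 1_\zeta$ whose scalar coefficients involve $\dim V = 2m+1$ and the eigenvalue by which the trace of $\fgl(E)$ acts on $1_\zeta$ (encoded by $\zeta|_{\fgl(E)} = (\tfrac12 - m)\sum_i \eps_i$). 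The weight $\zeta$ is chosen precisely so that all these coefficients vanish.

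Once $\varphi$ is in hand, PBW together with $\fn^- \cdot v = 0$ yield $\im(\varphi) = \rU(\fso(\bE)) \cdot v = \rU(\fg_{V,E}) \cdot \rU(\fso(\bE)_0) \cdot v = \rU(\fg_{V,E}) \cdot (S^2 E \cdot 1_\zeta) = I$, so $I$ is a subrepresentation as claimed. The main obstacle is the numerical cancellation in $[X, q] \cdot 1_\zeta = 0$, where the specific value $\zeta|_{\fgl(E)} = (\tfrac12 - m)\sum_i \eps_i$ and the odd dimension $\dim V = 2m+1$ conspire; this same dependence is what one expects to break in the type D analogue (cf.\ Remark~\ref{rk:oddSymplecticRk}). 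As an alternative to carrying out the computation by hand, the existence of $\varphi$ can be extracted from Boe's classification of homomorphisms between parabolic Verma modules \cite{boe1985homomorphisms} using purely weight-theoretic data.
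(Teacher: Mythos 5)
Your plan is sound and takes a genuinely different route from the paper. The paper does \emph{not} carry out the hands-on computation you outline; instead it applies Boe's theorem (Theorem~\ref{thm:boe}) directly, and the entire content of the proof is a weight-theoretic argument showing that $(\eps_n)$ is the \emph{unique} sequence of positive roots linking $\zeta$ to $\mu$ (the key point being a parity obstruction: the first $n$ entries of $\zeta+\rho$ are integers and the last $m$ are half-integers, so no ``mixing'' root $\eps_n\pm\eps_j$ with $j>n$ can appear). Boe's criterion then forces the standard map $M(\mu)\to M(\zeta)$ to be nonzero, and, since $S^2E$ occurs exactly once as a $\fgl(E)\times\fso(V)$-summand of $M(\zeta)$, its image is the submodule generated by $S^2E$. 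Your closing remark that the result can ``be extracted from Boe's classification using purely weight-theoretic data'' is accurate but understates the work: Boe's theorem is an if-and-only-if about the existence of a \emph{bad} linking sequence, so applying it still requires precisely the uniqueness argument the paper supplies.

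Your primary route -- directly checking $\fso(\bE)_{-1}\cdot v=0$ for $v=q\cdot 1_\zeta$ -- does succeed, and I verified the cancellation: for $X=f\wedge w$ with $f=e_n^*$, expanding $[X,q]\cdot 1_\zeta$ produces the coefficient $2\zeta_n+(\dim V-2)=(2(\tfrac12-m))+(2m-1)=0$, and for $f=e_\alpha^*$ with $\alpha<n$ every term vanishes outright (the Levi pieces $E_{n\alpha}$ are strictly lower triangular, hence in the Borel nilradical). So the approach buys you a more elementary proof at the cost of a moderately tedious Leibniz computation, while Boe's theorem buys you a shorter and more conceptual proof at the cost of importing nontrivial machinery. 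One sign slip to note: with the paper's conventions (Remark~\ref{rmk:hw} and the dotted action), the source weight is $\mu=s_{\eps_n}\bullet\zeta=\zeta-2\eps_n$, not $\zeta+2\eps_n$; with $\mu=\zeta+2\eps_n$ you would have $\mu_1<\mu_n$, which is not dominant. Your statement of the plan is otherwise correct, but it is a \emph{plan} -- the crucial cancellation is identified as ``the main obstacle'' and left unverified, which is the gap you would need to close to turn this into a complete proof along your preferred route.
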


We let $Z_{V,E}$ denote the quotient of $M(\zeta)$ by the left $\rU(\fg_{V,E})$-module generated by $S^2 E$.

\begin{example} \label{ex:interpolate}
There are two extremal examples for the quotient $Z_{V,E}$:
\begin{itemize}
    \item If $\dim V = 1$, then $\fso(\bE) \cong \fso(2n+1)$ and $Z_{V,E}$ is the spinor representation.
    \item If $\dim E = 1$, then $\fso(\bE) \cong \fso(2m+3)$ and $Z_{V,E}$ is the homogeneous coordinate ring of a nonsingular quadric hypersurface in $m+1$ variables.
\end{itemize}
In particular, the quotient $Z_{V,E}$ interpolates between these two representations as the dimensions of $E$ and $V$ vary. 
\end{example}

The rest of this section is devoted to the proof of Proposition \ref{prop:main}, but we will first need to recall some preliminaries. While the notions below are standard, we take this opportunity to fix coordinate systems to facilitate our calculations.

We identify the set of weights of $\fso(\bE)$ with $\bC^{m+n}$ with the usual scalar product $(x,y) = \sum_{i=1}^{m+n} x_iy_i$. Let $\eps_i$ be the $i$th standard basis vector of $\bC^{m+n}$. Our choice of simple roots for $\fso(\bE)$ is
\[
  \{\eps_i - \eps_{i+1} \mid i=1,\dots,m+n-1\} \cup \{\eps_{m+n}\}.
\]
Then the positive roots of $\fso(\bE)$ are of three kinds:
\begin{itemize}
\item $\eps_i$ for $i=1,\dots,m+n$,
\item $\eps_i - \eps_j$ for $1 \le i < j \le m+n$,
\item $\eps_i + \eps_j$ for $1 \le i < j \le m+n$.
\end{itemize}

The half-sum of the positive roots is
\[
  \rho = \frac12(2m+2n-1, 2m+2n-3, \dots, 3, 1).
\]
The Weyl group $W$ is the set of all signed permutations of size $m+n$ which acts on $\bC^{m+n}$ as signed permutation matrices. The dotted action of $W$ on $\bC^{m+n}$ is given by
\[
  w \bullet \lambda = w(\lambda+\rho)-\rho.
\]
Given a (positive) root $\alpha$, we let $s_\alpha \in W$ denote the corresponding reflection defined by
\[
  s_\alpha(\lambda) = \lambda - 2 \frac{(\alpha, \lambda)}{(\alpha, \alpha)} \alpha.
\]
We use the shorthand $\langle \alpha, \lambda \rangle = 2 \frac{(\alpha, \lambda)}{(\alpha, \alpha)}$. Note that $\langle \alpha, \lambda \rangle = (\alpha, \lambda)$ for the last 2 kinds of positive roots and $\langle \eps_i, \lambda \rangle = 2(\alpha, \lambda)$.

Given weights $\lambda, \mu$, say that a sequence of positive roots $(\gamma_1,\dots,\gamma_r)$ \defi{links $\lambda$ to $\mu$} if, setting $\lambda(i) = (s_{\gamma_i}\cdots s_{\gamma_1}) \bullet \lambda$ (and $\lambda(0)=\lambda$), we have
\begin{enumerate}
\item $\lambda(r) = \mu$, and
\item for each $i=1,\dots,r$, we have $\lambda(i) = \lambda(i-1) - n_i \gamma_i$ where $n_i = \langle \lambda(i-1) + \rho, \gamma_i \rangle$ is a positive integer.
\end{enumerate}
If there exists a sequence as above, then \defi{$\lambda$ is linked to $\mu$}.

We are going to use \cite{boe1985homomorphisms}. Note that the indexing of Verma modules used there is not the same as ours: in that reference, $M(\lambda)$ is what we would call $M(\lambda - \rho)$. Hence we have to use the dotted action rather than the usual action when acting on the weights indexing our Verma modules, cf. \cite[Definition 3.2]{boe1985homomorphisms}.

Finally, our set $\Lambda^+_I$ consists of sequences $(\alpha_1,\dots,\alpha_n, \beta_1,\dots,\beta_m)$ such that $\alpha_1 \ge \cdots \ge \alpha_n$, the differences between the $\alpha_i$ are all integers, $\beta_1 \ge \cdots \ge \beta_m \ge 0$, and $\beta_i \in \bZ_{\ge 0}$. As mentioned above, these are the weights whose restriction to $\fgl(n) \times \fso(2m+1)$ is the highest weight of a finite-dimensional representation.

Finally, if $\lambda$ is linked to $\mu$, then there is a homomorphism between parabolic Verma modules $M(\mu) \to M(\lambda)$ called the standard map. We record \cite[Theorem 3.3]{boe1985homomorphisms}:

\begin{theorem}[Boe] \label{thm:boe}
Given $\lambda, \mu \in \Lambda_I^+$ such that $\lambda$ is linked to $\mu$. Then the standard map $M(\mu)\to M(\lambda)$ is zero if and only if there is a sequence $(\gamma_1,\dots,\gamma_r)$ which links $\lambda$ to $\mu$ such that $\lambda(1) \notin \Lambda_I^+$.
\end{theorem}

\begin{proof}[Proof of Proposition~\ref{prop:main}]
  Let $\zeta$ denote the weight in \eqref{eqn:weight}. Set $\mu = s_{\eps_n} \bullet \zeta = \zeta - 2\eps_n$. Then $\zeta$ is linked to $\mu$ and we claim that the only sequence linking $\zeta$ to $\mu$ is the length 1 sequence $(\eps_n)$. If not, suppose $(\gamma_1,\dots,\gamma_r)$ links $\zeta$ to $\mu$ and that $r>1$. We have the following decomposition into simple roots
  \[
    2\eps_n = 2(\eps_n - \eps_{n+1}) + 2(\eps_{n+1} - \eps_{n+2}) + \cdots + 2(\eps_{n+m-1} - \eps_{n+m}) + 2\eps_{n+m}
  \]
Since $\zeta-\mu$ is a non-negative linear combination of the $\gamma_i$, we conclude that some $\gamma_i$ must be a positive root of the form $\eps_n \pm \eps_j$ where $j > n$. If $i$ is the smallest index where this happens, then the first $n$ entries of $\lambda(i-1)+\rho$ are all integers, but the last $m$ entries are all half-integers, so that $\langle \lambda(i-1) + \rho, \gamma_i \rangle$ is not an integer. This proves the claim.

Then Theorem~\ref{thm:boe} implies that the standard map $M(\mu)\to M(\zeta)$ is nonzero.  Next, by Remark~\ref{rmk:hw}, we have
\[
  M(\zeta) = \rU(\fg) \otimes_{\rU(\fp)} \bC_{m-\frac12}, \qquad M(\mu) = \rU(\fg) \otimes_{\rU(\fp)} (S^2(E) \otimes \bC_{m-\frac12}).
\]
Hence the image of the standard map is generated by $S^2(E)$, but there is exactly one copy of $S^2(E)$ appearing as a direct summand of $M(\zeta)$, so it coincides with the inclusion we are interested in.
\end{proof}

\subsection{The equivariant Hilbert function of $Z_{V,E}$} \label{sec:lowerbound}

To compute the character of the representation \( Z_{V,E} \) introduced earlier, we leverage several tools from representation theory, focusing on decompositions of module quotients and properties of certain induced representations. Specifically, we analyze the action of the Lie algebra \( \fso(\bE) \) on \( Z_{V,E} \) by examining submodules and highest-weight vectors.

Recall that $Z_{V,E}$ is defined to be the quotient of $M(\zeta)$ by the left $\rU(\fg_{V,E})$-module generated by $S^2 E$:
\[
  Z_{V,E} = M(\zeta) / (S^2E) \cong \rU( \fg_{V,E} ) / (S^2 E).
\]
Quotienting by the left ideal generated by $\fso(\bE)_2=\bigwedge^2 E$ yields
\[
  S^\bullet(V \otimes E)/(S^2 E).
\]
where now $(S^2 E)$ is the ideal generated by $S^2 E$ in the algebra $S^\bullet(V \otimes E)$.

We will denote the irreducible representation of $\bO(V)$ of highest weight $\lambda$ by $\bS_{[\lambda]}(V)$. As in the $\Sp\bO$ case, these are indexed by admissible partitions, i.e., those that satisfy $\lambda_1^T + \lambda_2^T \le \dim V$. Some basic information about their construction can be found in \cite[\S 19.5]{fulton2013representation}.

\begin{lemma} \label{lem:lw-quotient}
  The above quotient has a decomposition 
\[
    S^\bullet(V \otimes E)/(S^2 E) \cong \bigoplus_{\lambda} \bS_{[\lambda]}(V) \otimes \bS_\lambda(E)
  \]
  where the sum is over all admissible partitions $\lambda$.
\end{lemma}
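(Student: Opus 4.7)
The plan is to apply classical Howe duality for the dual pair $(\bO(V),\fsp(2n))$ acting on $S^\bullet(V\otimes E)$, in direct analogy with the orthosymplectic Howe duality established in Section~\ref{sec:howe}. The operators on the Weyl algebra of $V\otimes E$ commuting with $\bO(V)$ span a copy of $\fsp(E\oplus E^*)\cong\fsp(2n)$, with a parabolic decomposition $\fsp(2n)=\fn_-\oplus\fgl(E)\oplus\fn_+$ in which $\fn_+=S^2 E$ acts by multiplication by the $\bO(V)$-invariant quadratics $q_{i,j}=\beta(\phi(e_i),\phi(e_j))$. These quadrics are precisely a basis for the generators of the ideal $(S^2 E)\subset S^\bullet(V\otimes E)$, so $(S^2 E)=\fn_+\cdot S^\bullet(V\otimes E)$.

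First I would write down the Howe decomposition
\[
  S^\bullet(V\otimes E)\;\cong\;\bigoplus_{\lambda}\bS_{[\lambda]}(V)\otimes L_\lambda,
\]
where $\lambda$ ranges over admissible partitions (so that $\bS_{[\lambda]}(V)\ne 0$), and each $L_\lambda$ is an irreducible lowest-weight $\fsp(2n)$-module whose lowest $\fgl(E)$-type is $\bS_\lambda(E)$, up to a shift by a power of the trace character. This is the type~B analog of Proposition~\ref{prop:spo-howe}, with the fermionic variable $\phi_{0,i}$ replaced by an additional bosonic coordinate; the commutator identities of \S\ref{ss:calc} carry over with the roles of $S^2$ and $\bigwedge^2$ swapped. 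Taking the quotient by $(S^2 E)=\fn_+\cdot S^\bullet(V\otimes E)$ then gives, summand by summand, $L_\lambda/\fn_+\cdot L_\lambda\cong\bS_\lambda(E)$, producing the claimed $\bO(V)\times\GL(E)$-equivariant decomposition.

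The main obstacle is justifying the Howe decomposition itself, i.e.\ (i) irreducibility of each $\fsp(2n)$-multiplicity space as a module over $\fsp(2n)$, and (ii) the identification of its lowest $\fgl(E)$-type with $\bS_\lambda(E)$. Both of these are classical for the type~B dual pair (see e.g.\ \cite[\S 5.6,\,\S 8.1]{goodman2004multiplicity}); alternatively, they can be reproved by the same strategy as in Section~\ref{sec:howe}, adapted from the orthosymplectic to the purely bosonic setting. The admissibility condition $\lambda_1^T+\lambda_2^T\le 2m+1$ appears automatically as the nonvanishing criterion for $\bS_{[\lambda]}(V)$, and the implicit condition $\lambda_1\le n$ is likewise automatic since otherwise $\bS_\lambda(E)=0$ and the corresponding summand drops out on both sides.
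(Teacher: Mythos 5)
Your proposal is correct and takes essentially the same route as the paper, so I'll keep this brief. The paper's proof cites \cite[Lemma 4.21]{sam2013homology}, which packages exactly the two ingredients you invoke: the $\bO(V)\times\GL(E)$-equivariant multiplicity-space decomposition $S^\bullet(V\otimes E)\cong\bigoplus_\lambda\bS_{[\lambda]}(V)\otimes M_\lambda$, and the fact that each $M_\lambda$ is minimally generated over $S^\bullet(S^2E)$ by $\bS_\lambda(E)$. Your phrasing of the second fact — that $L_\lambda$ is a cyclic lowest-weight $\fsp(2n)$-module with lowest $\fgl(E)$-type $\bS_\lambda(E)$, so that the $\fn_+$-coinvariants $L_\lambda/\fn_+ L_\lambda$ equal $\bS_\lambda(E)$ (the lowest-degree piece survives, and the parabolic Verma cover $\bS_\lambda(E)\otimes S^\bullet(\fn_+)\twoheadrightarrow L_\lambda$ shows nothing more can) — is the Lie-theoretic restatement of minimal generation, since $\rU(\fn_+)=S^\bullet(S^2 E)$ acts by multiplication. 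So you are giving a Goodman--Wallach citation where the paper gives a Sam--Snowden citation, but the content is the same. One small typo: the implicit nonvanishing constraint is $\ell(\lambda)\le n$ rather than $\lambda_1\le n$, since $\bS_\lambda(E)\ne 0$ when $\dim E=n$ requires $\lambda$ to have at most $n$ rows.
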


We should add the requirement that $\ell(\lambda)\le n$, but $\bS_\lambda(E)=0$ if $\ell(\lambda)>n$ so for the purposes of this formulation, that requirement is redundant.

\begin{proof}
  To see this, we can use \cite[Lemma 4.21]{sam2013homology} (more specifically, its proof) which gives an $(\bO(V) \times \GL(E))$-equivariant decomposition 
  \[
    S^\bullet(V \otimes E) = \bigoplus_\lambda \bS_{[\lambda]}(V) \otimes M_\lambda,
  \]
  where the sum is over all admissible partitions $\lambda$ such that $\ell(\lambda) \le n$, and $M_\lambda$ is a $\GL(E)$-equivariant $S^\bullet (S^2 E)$-module which is minimally generated by $\bS_\lambda(E)$ (here the action of $S^2 E$ is via the inclusion $S^2 E \subset S^2(V \otimes E)$). In particular, quotienting by the ideal generated by $S^2 E$ has the effect of replacing $M_\lambda$ by $\bS_\lambda(E)$, which implies our desired result.
\end{proof}

As an immediate consequence, if we consider the action of $\fso(E \oplus E^*)$ on $Z_{V,E}$, it contains a nonzero weight vector of weight $\zeta' - \lambda^{\rm op}$ for each admissible partition $\lambda$ with $\ell(\lambda) \le n$. In particular, the following direct sum must be a quotient of $Z_{V,E}$:
\[
  \bigoplus_{\lambda} \bS_{[\lambda]}(V) \otimes L_{\fso(E \oplus E^*)}(\zeta' - \lambda^{\rm op})
\]
where again the sum is over all admissible partitions $\lambda$ such that $\ell(\lambda) \le n$.

We let $M_{\fso(E \oplus E^*)}(\zeta'-\lambda^{\rm op})$ denote the corresponding parabolic Verma module with respect to the decomposition \eqref{eqn:parabolic}. More precisely, 
\[
  M_{\fso(E \oplus E^*)} (\zeta' - \lambda^{\rm op}) = {\rm Ind}_{\fk \oplus \fn_+}^{\fso(E \oplus E^*)} L_{\fgl(E)}(\zeta'-\lambda^{\rm op})
\]
where $L_{\fgl(E)}(\zeta'-\lambda^{\rm op})$ is the irreducible $\fgl(E)$-representation with highest weight $(\frac12-m-\lambda_n,\dots,\frac12-m-\lambda_1)$. Then as an $\fso(E \oplus E^*)$-representation, we have
\[
  \rU(\fg_{V,E}) \cong {\rm Ind}^{\fso(E \oplus E^*)}_{\fk \oplus \fn_+} (S^\bullet (V \otimes E)) \cong \bigoplus_\lambda \bS_{\lambda}(V) \otimes M_{\fso(E\oplus E^*)}(\zeta' - \lambda^{\rm op}).
\]
By standard Lie theory, $L_{\fso(E \oplus E^*)}(\zeta'-\lambda^{\rm op})$ is naturally a quotient of $M_{\fso(E \oplus E^*)}(\zeta' - \lambda^{\op})$, and we define
  \[
    N_\lambda = \ker(M_{\fso(E \oplus E^*)}(\zeta' - \lambda^\op) \to L_{\fso(E \oplus E^*)}(\zeta' - \lambda^\op)).
  \]
  
  \begin{proposition}
    Let $J$ be the image of $\bigoplus_\lambda \bS_{[\lambda]}(V) \otimes N_\lambda$ in $Z_{V,E}$ under the map $\rU(\fg_{V,E}) \to Z_{V,E}$. Then $J$ is a $\rU(\fso(\bE))$-submodule of $Z_{V,E}$.
  \end{proposition}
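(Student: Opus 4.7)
My plan is to first reduce the claim to stability of $J$ under $V\otimes E = \fso(\bE)_1$, and then to realize $J$ as the image of an $\fso(\bE)$-equivariant map built from standard homomorphisms of parabolic Verma modules.

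\emph{Reduction to $V\otimes E$-stability.} I would first observe that $\fso(\bE)$ is generated as a Lie algebra by $\fso(E\oplus E^*)$, $\fso(V)$, and $V\otimes E$: the Levi piece $\fso(\bE)_0 = \fgl(E) + \fso(V)$ and the pieces $\fso(\bE)_{\pm 2}$ lie inside, and a direct bracket calculation in $\bigwedge^2\bE$ shows that $[\bigwedge^2 E^*, V\otimes E]$ surjects onto $V\otimes E^* = \fso(\bE)_{-1}$. Stability of $J$ under $\fso(E\oplus E^*)$ is immediate from the construction, since each $N_\lambda$ is by definition a submodule of the parabolic Verma module $M_{\fso(E\oplus E^*)}(\zeta' - \lambda^\op)$; stability under $\fso(V)$ follows since $\fso(V)$ commutes with $\fso(E\oplus E^*)$ and preserves each $\bS_{[\lambda]}(V)$-factor. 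Since $V\otimes E$ is irreducible as a $(\fgl(E) + \fso(V))$-representation, the bracket identity $(g\cdot X)\cdot j = g\cdot(X\cdot j) - X\cdot(g\cdot j)$ further reduces full $V\otimes E$-stability to stability under a single non-zero element $X_0\in V\otimes E$.

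\emph{Realization via standard maps.} By the analogue of Lemma~\ref{lem:tor1} for $\fso(E\oplus E^*)$-multiplicity spaces (transported via Remark~\ref{rmk:iso-mult}), each $N_\lambda$ is the image of a standard $\fso(E\oplus E^*)$-map $\phi_\lambda\colon M_{\fso(E\oplus E^*)}(\sigma_\lambda)\to M_{\fso(E\oplus E^*)}(\zeta' - \lambda^\op)$, where $\sigma_\lambda$ is obtained from $\zeta' - \lambda^\op$ by applying the type D reflection $s_0$ used in the proof of that lemma. I would then use Boe's theorem (Theorem~\ref{thm:boe}) to construct, for each admissible $\lambda$ with $\ell(\lambda)\le n$, an $\fso(\bE)$-equivariant standard map $\Phi_\lambda\colon M_{\fso(\bE)}(\tau_\lambda)\to M(\zeta)$, where $\tau_\lambda$ is the $\fso(\bE)$-weight with $\fgl(E)$-part $\sigma_\lambda$ and $\fso(V)$-part $\lambda$. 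Composing with the quotient $M(\zeta)\to Z_{V,E}$ yields an $\fso(\bE)$-stable image in $Z_{V,E}$, and summing these images over all admissible $\lambda$ exhibits $J$ as a sum of $\fso(\bE)$-submodules.

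\emph{Main obstacle.} The principal technical hurdle is the weight bookkeeping required to apply Boe's theorem: verifying that $\tau_\lambda$ lies in $\Lambda_I^+$, that it is linked to $\zeta$ in the $\fso(\bE)$-Weyl group by a length-one reflection lifting the $s_0$ used to define $\sigma_\lambda$, and that Boe's non-vanishing criterion holds. One must also check that the image of each $\Phi_\lambda$, decomposed under $\fso(V)\oplus\fso(E\oplus E^*)$, contributes exactly $\bS_{[\lambda]}(V)$ tensor the image of $N_\lambda$ to $J$ inside $Z_{V,E}$ rather than producing strictly more. These verifications parallel the modification-rule calculations of \S\ref{sec:SpoLittlewood}, now carried out in the $\fso(\bE)$ setting.
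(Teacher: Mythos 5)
Your proposal takes a genuinely different route from the paper, but the second step contains a fatal flaw.

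The paper shows stability of $J$ under the \emph{lowering} operator $V\otimes E^* = \fso(\bE)_{-1}$ directly: it identifies the minimal $S^\bullet(\bigwedge^2 E)$-generators $N^\circ_\lambda\subset \bS_\lambda E\otimes S^d(\bigwedge^2 E)$, observes that the image $P_\lambda$ of $\bS_{[\lambda]}(V)\otimes N^\circ_\lambda$ in $Z_{V,E}$ is annihilated by $\bigwedge^2 E^*$, deduces (since $[\bigwedge^2 E^*, V\otimes E^*]=0$) that $(V\otimes E^*)\cdot P_\lambda$ is again $\bigwedge^2 E^*$-annihilated and sits in positive $S^\bullet(\bigwedge^2 E)$-degree, and concludes using Corollary~\ref{cor:Mlambda-reldeg} and Lemma~\ref{lem:tor1} that such vectors already lie in some $N_\mu$. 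It never touches the raising direction. You instead reduce to stability under $V\otimes E$ and try to realize $J$ as a sum of images of $\fso(\bE)$-standard maps $\Phi_\lambda\colon M_{\fso(\bE)}(\tau_\lambda)\to M(\zeta)$ with $\tau_\lambda$ having $\fso(V)$-part equal to $\lambda$.

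The gap is in that realization: for a nonzero standard map $M_{\fso(\bE)}(\tau_\lambda)\to M(\zeta)$ to exist, $\tau_\lambda$ must lie in the shifted Weyl-group orbit $W\bullet\zeta$ (standard maps are controlled by linkage). But the linkage analysis carried out in the proof of Corollary~\ref{cor:irredAndTrivialJ} shows that every weight in $W\bullet\zeta\cap\Lambda_I^+$ has $\fso(V)$-part identically zero: $\zeta+\rho$ has integer entries in the $\fgl(E)$-block and half-integer entries in the $\fso(V)$-block, so $w\in W$ cannot mix them, and since the $\fso(V)$-block of $\zeta$ is $0$, dominance forces $w$ to act trivially there. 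Hence for $\lambda\ne 0$ (and there are many such $\lambda$ with $N_\lambda\ne 0$ as soon as $n\ge 2m+1$, by Corollary~\ref{cor:Mlambda-reldeg}), your $\tau_\lambda$ is never linked to $\zeta$, so $\Phi_\lambda$ does not exist. The image of the cyclic generator of $M_{\fso(\bE)}(\tau_\lambda)$ under any nonzero map to $M(\zeta)$ would be an $\fn$-annihilated vector of weight $\tau_\lambda$, and $M(\zeta)$ simply has none of those. This also explains why the paper does not attempt to exhibit $J$ as an image of Verma-module maps: the generators of $J$ have nontrivial $\fso(V)$-weight, so they cannot be $\fso(\bE)$-highest-weight vectors, and $J$ therefore cannot be built from standard maps to $M(\zeta)$. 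Your reduction to $V\otimes E$-stability in the first step is fine in principle, but the proposal gives no independent argument for it once the standard-map construction is removed.
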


  \begin{proof}
    Let $\lambda$ be an admissible partition. Let $N^\circ_\lambda$ be the space of minimal generators of $N_\lambda$ as an $S^\bullet(\bigwedge^2 E)$-module, so that as vector spaces we have an isomorphism $N^\circ_\lambda \to N_\lambda\otimes_{S^\bullet(\bigwedge^2 E)} \bC$. By Corollary~\ref{cor:Mlambda-reldeg} and Lemma~\ref{lem:SwedgeModuleIso}, there is an integer $d$  such that $N^\circ_\lambda \subset \bS_\lambda E \otimes S^d(\bigwedge^2 E)$. Finally, let $P_\lambda$ be the image of $\bS_{[\lambda]}(V) \otimes N^\circ_\lambda$ in $Z_{V,E}$.

    Then $P_\lambda$ is annihilated by $\bigwedge^2 E^* \subset \fso(\bE)$. Since $\bigwedge^2 E^*$ and $V \otimes E^*$ commute, the image of $P_\lambda$ under $V \otimes E^*$ is also annihilated by $\bigwedge^2 E^*$. Note that $V \otimes E^*$ takes $P_\lambda$ into a sum of terms which are of one of two forms:
    \begin{itemize}
    \item $\bS_{[\mu]}(V) \otimes \bS_\mu E \otimes S^{d-1}(\bigwedge^2 E)$ where $\mu$ is an admissible partition such that $|\mu|=|\lambda|+1$, or 
    \item $\bS_{[\nu]}(V) \otimes \bS_\nu E \otimes S^d(\bigwedge^2 E)$ where $\nu$ is an admissible partition such that $|\nu|=|\lambda|-1$. 
    \end{itemize}
    If $d>1$, then these terms are still in positive degree, and any elements annihilated by $\bigwedge^2 E^*$ in positive degree must be contained in $N_\mu$ and hence contained in $J$. If $d=1$, then the second type of terms are still in positive degree, and we claim that the first type do not exist. To see this, consider Corollary~\ref{cor:Mlambda-reldeg} and Remark~\ref{rmk:iso-mult}, which together imply that $N_\lambda^\circ$ consists of Schur functors $\bS_{\alpha}(E)$ such that $\alpha_1^T + \alpha_2^T \ge 2m+3$. But then $\bS_\mu E \subseteq \bS_\alpha E \otimes E^*$, which means that $\mu_1^T + \mu_2^T \ge 2m+2$, and hence $\mu$ is not an admissible partition.
    
    Hence we have shown that $J$ is closed under $V \otimes E^*$ and $\bigwedge^2 E^*$. By PBW, the $\fso(\bE)$-submodule generated by $J$ is then the same as the $\fp$-submodule that it generates, but it is already closed under $\fp$.
  \end{proof}

  \begin{corollary}\label{cor:irredAndTrivialJ}
    $Z_{V,E}$ is an irreducible $\fso(\bE)$-module and $J=0$.
  \end{corollary}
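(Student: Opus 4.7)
The plan is to pin down $[Z_{V,E}]$ by squeezing between a Poincar\'e--Birkhoff--Witt upper bound and the lower bound supplied by the surjection $Z_{V,E} \twoheadrightarrow Z_{V,E}/J$, and to extract both assertions from this squeeze. For the upper bound, I would equip $\uU(\fg_{V,E}) = M(\zeta)$ with its PBW filtration and push it to $Z_{V,E}$; since $S^2 E$ already lives in filtration degree $2$, $\gr Z_{V,E}$ is a quotient of $\gr \uU(\fg_{V,E})/(S^2 E) \cong (S^\bullet(V \otimes E)/(S^2 E)) \otimes S^\bullet(\bigwedge^2 E)$, which decomposes by Lemma~\ref{lem:lw-quotient} as $\bigoplus_\lambda \bS_{[\lambda]}(V) \otimes \bS_\lambda(E) \otimes S^\bullet(\bigwedge^2 E)$. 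Passing to $\bO(V) \times \fgl(E)$-characters gives $[Z_{V,E}] \le \sum_{\lambda\ \text{adm.}} [\bS_{[\lambda]}(V)] \cdot [\bS_\lambda(E)] \cdot [S^\bullet(\bigwedge^2 E)]$.

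For the matching lower bound, the preceding proposition together with Remark~\ref{rmk:iso-mult} identifies $Z_{V,E}/J \cong \bigoplus_\lambda \bS_{[\lambda]}(V) \otimes L_{\fso(E \oplus E^*)}(\zeta' - \lambda^{\rm op})$. Invoking the Howe-duality decomposition of $S^\bullet(U \otimes E)$ in Proposition~\ref{prop:spo-howe} and the character-ring isomorphism $\rK(\Sp\bO(U)) \cong \rK(\bO(V))$ from Corollary~\ref{cor:char-id} and Proposition~\ref{prop:comm-square}, which sends $[\bS_\mu(U)] \mapsto [\bS_\mu(V)]$ for every $\mu$ via the identity \eqref{eqn:infinite-2}, rewrites this lower bound as the Cauchy-type character $[S^\bullet(V \otimes E)]$. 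To close the squeeze I would use the Tor computation of Theorem~\ref{thm:MlambdaTor}: it yields an expansion of each $[L_{\fso(E \oplus E^*)}(\zeta' - \lambda^{\rm op})]$ as an alternating sum $\sum_\alpha (-1)^{i_{2m|1}(\alpha)}\,[\bS_\alpha(E)] \cdot [S^\bullet(\bigwedge^2 E)]$ over partitions $\alpha$ with $\tau_{2m|1}(\alpha)=\lambda$; combined with Proposition~\ref{prop:plethysms} and the $\bO(V)$-branching encoded by Lemma~\ref{lem:spo-spec}, the modification-rule cancellations force the two bounds to coincide, so $[Z_{V,E}] = [Z_{V,E}/J]$ and therefore $J = 0$.

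For irreducibility, I would use that the proof of Proposition~\ref{prop:main} has already exhibited $Z_{V,E}$ as the cokernel of the standard map $M(\mu) \to M(\zeta)$ with $\mu = \zeta - 2\eps_n$, and that the uniqueness of the length-one linking sequence $(\eps_n)$ established there, together with Theorem~\ref{thm:boe}, rules out any other parabolic Verma module admitting a nonzero standard map into $M(\zeta)$. The image of this one standard map therefore exhausts the maximal proper submodule of $M(\zeta)$, whence $Z_{V,E} \cong L_{\fso(\bE)}(\zeta)$ is irreducible. The main obstacle is the character bookkeeping of the second step: termwise the PBW upper bound strictly dominates the lower bound (parabolic Verma versus irreducible quotient), and showing the collapse requires precisely the Tor-theoretic input of Section~\ref{sec:homologyOfLittlewood} to match the modification-rule signs of Lemma~\ref{lem:spo-spec} against the admissibility restrictions on the PBW side.
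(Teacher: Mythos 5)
Your proposal takes a character-squeeze approach that is genuinely different from the paper's, but it has a fatal gap that you yourself flag at the end. The PBW upper bound $[S^\bullet(V\otimes E)/(S^2 E)]\cdot[S^\bullet(\bigwedge^2 E)]$ expands via Lemma~\ref{lem:lw-quotient} as $\sum_\lambda [\bS_{[\lambda]}(V)]\cdot[\bS_\lambda(E)]\cdot[S^\bullet(\bigwedge^2 E)]$, i.e.\ as $\sum_\lambda [\bS_{[\lambda]}(V)]\cdot[M_{\fso(E\oplus E^*)}(\zeta'-\lambda^\op)]$, whereas the lower bound $[Z_{V,E}/J]$ is $\sum_\lambda [\bS_{[\lambda]}(V)]\cdot[L_{\fso(E\oplus E^*)}(\zeta'-\lambda^\op)]$. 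Since the $[\bS_{[\lambda]}(V)]$ are linearly independent and $L \subsetneq M$ whenever $N_\lambda\ne 0$ (which by Theorem~\ref{thm:MlambdaTor} and Lemma~\ref{lem:tor1} happens for essentially every admissible $\lambda$), these two sums are genuinely unequal termwise, and no amount of ``modification-rule cancellation'' can make them coincide: the Tor alternating sum computes the K-class of $L$, which is strictly dominated coefficientwise by the K-class of $M$. Indeed, the whole content of the corollary is that $\gr Z_{V,E}$ is a \emph{proper} quotient of the naive PBW upper bound, so no squeeze based on that upper bound can close. Your rewriting of the lower bound as a ``Cauchy-type character $[S^\bullet(V\otimes E)]$'' is also incorrect: what it corresponds to under the character-ring isomorphism is $[S^\bullet(U\otimes E)]$, which is precisely the target character for $[Z_{V,E}]$ (the statement you are trying to prove), not $[S^\bullet(V\otimes E)]$, so the argument is circular at that point.

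The irreducibility step is also not sound. The uniqueness of the length-one linking sequence from $\zeta$ to $\mu=\zeta-2\eps_n$, combined with Theorem~\ref{thm:boe}, proves that this one standard map is nonzero; it says nothing about whether other weights $\mu'$ are linked to $\zeta$, nor does nonvanishing of a standard map tell you that its image exhausts the maximal submodule of $M(\zeta)$. In parabolic category $\cO$ the maximal submodule need not be the image of a single standard map, and establishing that $M(\zeta)/\mathrm{im}(M(\mu)\to M(\zeta))$ is irreducible is exactly what needs to be shown, not a consequence of the linking computation. The paper's actual argument is a weight-theoretic one: it uses central characters and the half-integer/integer parity split of $\zeta+\rho$ to force any highest-weight vector of a subquotient of $M(\zeta)$ to be $\fso(V)$-invariant and to lie in a Weyl orbit giving rectangular $\GL(E)$-weights $\bS_{s\times(s+1)}$; the PBW bound then confines such invariants to $S^\bullet(\bigwedge^2 E)$, and a parallel Weyl-orbit computation for $\fso(E\oplus E^*)$ shows those rectangles cannot appear in $N_0$, yielding $J=0$ and irreducibility simultaneously. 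You would need to replace both halves of your argument with something along these lines.
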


  \begin{proof}
If $\nu$ is the weight of any nonzero highest weight vector in any subquotient of $M(\zeta)$, then there exists $w \in W$ such that $\nu = w \bullet \zeta$. This follows from a combination of a few things: first $M(\zeta)$ is indecomposable since every submodule has a weight decomposition and any submodule containing the weight space for $\zeta$ must be the whole space. Second, we can use central characters of $\rU(\fso(\bE))$ to decompose general modules, so there is a unique central character acting on $M(\zeta)$ \cite[\S 1.12]{humphreys2008representations}. Next, the central character for the irreducible representation $L(\nu)$ with highest weight $\nu$ is the same as that of $L(\lambda)$ if and only if $\nu = w \bullet \lambda$ \cite[\S 1.10]{humphreys2008representations}.

Next, let us examine which weights of the form $w \bullet \zeta$ appear in $M(\zeta)$. First, the first $n$ entries of $\zeta$ are half-integers (i.e., elements of the coset $\frac12 + \bZ$) while the last $m$ entries are integers. If $w$ swaps any of the first $n$ entries with some of the last $m$ entries, then the resulting weight will have half-integer entries in the last $m$ entries. But $M(\zeta)$ has no such weights since they are all of the form $\zeta$ plus some integer sequence. So $w$ must permute (possibly with signs) the first $n$ entries amongst themselves and the same for the last $m$ entries. However, the last $m$ entries of $\zeta$ are all $0$, so acting on this in any non-trivial way will give negative entries, so cannot belong to $\Lambda_I^+$. Hence all of the nontrivial action must occur in the first $n$ entries.

We claim that the only possible weights in $\Lambda_I^+$ that can result from acting on the first $n$ entries are of the form $(-\beta_n,\dots, -\beta_1,0,\dots,0) + \zeta$ where $\bS_\beta$ appears as a summand of $\bigwedge^\bullet (S^2)$. This can be proven directly, but here is an alternative explanation. First, if we focus on the first $n$ entries only, then we are asking what sequences arise as $w(\zeta + \rho) - \zeta - \rho$ where $w$ is a signed permutation. But the first $n$ entries of $\zeta + \rho$ is the sequence $(n,n-1,\dots,1)$, so by \cite[\S 2]{heisenberg} (take $n$ and $k$ to be $0$ and $n$ in the notation there) this amounts to computing the representations in the Koszul complex on $S^2 E$, i.e., the exterior algebra $\bigwedge^\bullet(S^2 E)$. In particular, $|\beta|$ is even and the action of $\fso(V)$ on this representation is trivial.

Using the PBW degeneration \cite{braverman1996poincare}, we get an upper bound for the equivariant Hilbert series of $Z_{V,E}$ from $S^\bullet(E \otimes V)/(S^2 E) \otimes S^\bullet(\bigwedge^2 E)$. Note that $\fso(V)$-invariants in even degree are also $\bO(V)$-invariants. By Lemma~\ref{lem:lw-quotient}, the quotient $S^\bullet(E \otimes V)/(S^2 E)$ has no non-constant $\bO(V)$-invariants. 

In particular, the $\fso(V)$-invariants which can potentially be highest weight vectors only appear as direct summands of $S^\bullet(\bigwedge^2 E)$, i.e., with the Schur functors $\bS_{(2\alpha)^T}(E)$. The highest weights of the irreducible representations appearing in both $\bigwedge^\bullet (S^2 E)$ and $S^\bullet(\bigwedge^2 E)$ (in terms of irreducible direct summands) consists of the rectangles of the form $s \times (s+1)$, where $s$ is even. 

Since $J$ is a submodule, if $J \ne 0$, then it must contain a nonzero highest weight vector. In particular, $J$ must contain $\bS_{s \times (s+1)}(E)$ for some $s$, so that $\bS_{s \times (s+1)}(E) \subset N_0$. Now apply the reasoning from the first paragraph to $\fso(E \oplus E^*)$ acting on $M(\zeta')$: 
\[
  (0,\dots,0,\underbrace{-s-1,\dots,-s-1}_s) + \zeta'
\]
is in the shifted Weyl group orbit of $\zeta'$ (under the Weyl group of $\fso(E\oplus E^*)$; we are in type D, so $\rho = (n-1,\dots,1,0)$). However, if $-\lambda^\op + \zeta'$ is in the shifted Weyl group orbit of $\zeta'$ and $\lambda \ne 0$, then we claim that we must have $\ell(\lambda) = \lambda_1 + 2m$: to get a sequence of the form $-\lambda^\op + \zeta'$ starting from $\zeta'$, we pick a subsequence $n-s_1-(2m-1)/2 > \cdots > n-s_r - (2m-1)/2$ (with $r$ even) of $\zeta' + \rho$, negate each term of the subsequence, and move this subsequence to the end of the tuple and subtract $\rho$. In that case, none of the entries to the left of $n-s_1-(2m-1)/2$ are affected, so $\ell(\lambda) = n-s_1+1$. Furthermore, the last entry of $\zeta'+\rho$ becomes $-n+s_1+(2m-1)/2$, which means that $\lambda_1 = n-s_1-2m+1$.

Hence $\lambda$ cannot be a rectangle of shape $s \times (s+1)$, so we conclude that $J = 0$. This same argument also shows that $Z_{V,E}$ contains no non-constant highest weight vectors, so it is irreducible.
  \end{proof}

  \begin{corollary}
    \begin{enumerate}
    \item We have an isomorphism of $\bO(V) \times \fso(E \oplus E^*)$-representations
    \[
      Z_{V,E} \cong \bigoplus_\lambda \bS_{[\lambda]}(V) \otimes L_{\fso(E \oplus E^*)}(\zeta' - \lambda^{\rm op})
    \]
    where the sum is over all partitions $\lambda$ such that $\lambda_1^T + \lambda_2^T \le 2m+1$ and $\lambda_1^T \le n$.
  \item The characters of $Z_{V,E}$ and $S^\bullet(U \otimes E)$ agree, where $U = \bC^{2m|1}$ (via the isomorphism of character rings $\text{K} (\bO (2m+1)) \cong {\rm K} (\Sp \bO (2m | 1))$).
  \item Using the grading $\deg(E \otimes V) = 1$ and $\deg(\bigwedge^2 E)=2$ for $\rU(\fg_{E,V})$, the Hilbert series of $Z_{V,E}$ is $\left( \frac{1+t}{(1-t)^{2m}} \right)^n$.
  \end{enumerate}
\end{corollary}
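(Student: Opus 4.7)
The plan is to derive all three parts from the vanishing $J=0$ of Corollary~\ref{cor:irredAndTrivialJ}, combined with the Howe duality of Proposition~\ref{prop:spo-howe} and the character ring isomorphism of Proposition~\ref{prop:comm-square}.

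For (1), the discussion immediately preceding Corollary~\ref{cor:irredAndTrivialJ} sets up a $\bO(V)\times\fso(E\oplus E^*)$-equivariant surjection $Z_{V,E} \twoheadrightarrow \bigoplus_\lambda \bS_{[\lambda]}(V)\otimes L_{\fso(E\oplus E^*)}(\zeta'-\lambda^{\op})$ (summed over admissible $\lambda$ with $\ell(\lambda)\le n$), whose kernel is by definition the submodule $J$. Since $J=0$, this surjection is an isomorphism. To make the construction explicit, I would use the parabolic-Verma decomposition $\rU(\fg_{V,E})\cong\bigoplus_\lambda \bS_\lambda(V)\otimes M_{\fso(E\oplus E^*)}(\zeta'-\lambda^{\op})$ of \S\ref{sec:lowerbound}: for each admissible $\lambda$, the irreducible summand $\bS_{[\lambda]}(V)\subset \bS_\lambda(V)$ contributes a highest weight vector of weight $\zeta'-\lambda^{\op}$ in $Z_{V,E}$ (detected by the further quotient to $S^\bullet(V\otimes E)/(S^2 E)$ from Lemma~\ref{lem:lw-quotient}), and the resulting parabolic Verma module factors through its unique irreducible quotient modulo $J$.

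For (2), invoke Proposition~\ref{prop:spo-howe}, which gives the parallel decomposition $S^\bullet(U\otimes E)\cong\bigoplus_\lambda \bS_{[\lambda]}(U)\otimes L_{\fso(E\oplus E^*)}(\zeta'-\lambda^{\op})$. The $\fso(E\oplus E^*)$-multiplicity spaces coincide with those in (1) on the nose, and under Proposition~\ref{prop:comm-square} the classes $[\bS_{[\lambda]}(V)]$ and $[\bS_{[\lambda]}(U)]$ correspond for every admissible $\lambda$; summing over $\lambda$ yields the character identity.

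For (3), I would specialize (2). The grading on $\rU(\fg_{V,E})$ with $\deg(V\otimes E)=1$ and $\deg(\bigwedge^2 E)=2$ is the grading by the total number of $E$-tensor factors, and hence equals the $\fgl(E)$-degree; this descends to $Z_{V,E}$. Likewise each generator of $S^\bullet(U\otimes E)$ carries $\fgl(E)$-degree one. Setting each $\bO(V)$ (respectively $\Sp\bO(U)$) variable to $1$ and each $\fgl(E)$-variable to $t$ turns (2) into the equality of Hilbert series. Since $U=\bC^{2m|1}$ contributes $2m$ even and one odd generator per copy of $E$, the super-symmetric algebra $S^\bullet(U\otimes E)$ has Hilbert series $(1+t)^n/(1-t)^{2mn}$, which gives the asserted formula. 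The main subtle step is this final specialization: one must verify that $\rK(\Sp\bO(2m|1))$ is the ordinary (not super-) character ring on the shared torus, so that evaluation at the identity on both sides produces the ordinary dimension. This is precisely the ``plus sign'' identity $[S^d(U)]=h'_d+h'_{d-1}$ used in the proof of Corollary~\ref{cor:char-id}, which ensures that $\dim \bS_{[\lambda]}(V)=\dim \bS_{[\lambda]}(U)$ and that both Hilbert series are computed with the same conventions.
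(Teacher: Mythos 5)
Your proposal is correct and follows the paper's own argument step for step: part (1) is the deduction from $J=0$ in Corollary~\ref{cor:irredAndTrivialJ} (the kernel of the surjection onto $\bigoplus_\lambda \bS_{[\lambda]}(V)\otimes L_{\fso(E\oplus E^*)}(\zeta'-\lambda^{\op})$ is $J$ by construction), part (2) combines (1) with the Howe duality of Proposition~\ref{prop:spo-howe} and the character-ring isomorphism, and part (3) computes the Hilbert series of $S^\bullet(U\otimes E)$ as $(1+t)^n(1-t)^{-2mn}$ and invokes (2). Your closing observation in (3)---that one must check that $\rK(\Sp\bO(2m|1))$ uses ordinary characters so that evaluating at the identity yields ordinary dimensions, justified via $[S^d(U)]=h'_d+h'_{d-1}$ from the proof of Corollary~\ref{cor:char-id}---is a genuine subtlety the paper leaves implicit; had one used superdimensions, the odd factor would contribute $(1-t)^n$ rather than $(1+t)^n$, so the check is worth recording.
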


\begin{proof}
  (1) By definition, the $\rU (\fso (\bE))$-submodule $J$ is the kernel of the induced surjection $Z_{V,E} \twoheadrightarrow \bigoplus_\lambda \bS_{[\lambda]}(V) \otimes L_{\fso(E \oplus E^*)}(\zeta' - \lambda^{\rm op})$. By Corollary \ref{cor:irredAndTrivialJ}, this surjection must in fact be an isomorphism.

  (2) Combine (1) with Proposition~\ref{prop:spo-howe}.

  (3) As a graded vector space, we have $S^\bullet(U \otimes E) \cong S^\bullet(\bC^{2m} \otimes E) \otimes \bigwedge^\bullet(E)$, so its Hilbert series is $(1-t)^{-2nm} (1+t)^n$, so we finish by using (2).
\end{proof}

\section{Loose ends}

\subsection{Quadric Schur modules as irreducible representations}\label{subsec:quadricAsIrred}

Set $R \coloneqq \rU (\so (\bE))^{\GL(E)}$, the subalgebra of $\GL (E)$-invariants of $\rU (\so (\bE))$ and let $A = S^\bullet(V)/(q)$ be the quadric hypersurface associated to $(V,\beta)$.

\begin{theorem}\label{thm:irredQuadricThm}
   \begin{enumerate}
       \item For every $\lambda$ with $\ell (\lambda) \leq \dim E$ and $\lambda_i \leq 1$ for $i \geq \dim V -1$, the quadric Schur module $\bS_\lambda^A$ is an irreducible $R$-module.
       \item The set of $\bS_\lambda^A$ with $\lambda$ as above constitutes a set of non-isomorphic $R$-modules.
   \end{enumerate} 
\end{theorem}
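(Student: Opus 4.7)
The proof rests on the irreducibility of $Z_{V,E}$ as an $\fso(\bE)$-module established in the preceding subsection, combined with the explicit $\bO(V) \times \fso(E \oplus E^*)$-equivariant decomposition
\[
Z_{V,E} \;\cong\; \bigoplus_\mu \bS_{[\mu]}(V) \otimes L_{\fso(E \oplus E^*)}(\zeta' - \mu^{\rm op}).
\]
Following the introduction, I interpret $\bS_\lambda^A$ as the $\bS_\lambda(E)$-multiplicity space $W_\lambda := \hom_{\GL(E)}(\bS_\lambda(E), Z_{V,E})$, which inherits an $R$-module structure since $R$ commutes with $\GL(E)$.

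For part (1), I would use the Howe-duality-style density argument alluded to in the introduction. Given a nonzero $R$-submodule $N \subseteq W_\lambda$, form the inclusion $\bS_\lambda(E) \otimes N \hookrightarrow Z_{V,E}$ and consider the $\rU(\fso(\bE))$-submodule $M' \subseteq Z_{V,E}$ it generates. Decomposing $\rU(\fso(\bE)) = \bigoplus_\sigma \rU(\fso(\bE))_\sigma$ under the $\GL(E)$-adjoint action (with trivial isotypic part $\rU(\fso(\bE))_0 = R$), one verifies that $M' = \bigoplus_\mu \bS_\mu(E) \otimes N_\mu$ for $R$-submodules $N_\mu \subseteq W_\mu$ with $N_\lambda = N$. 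Irreducibility of $Z_{V,E}$ then forces $M' = Z_{V,E}$, whence $N = W_\lambda = \bS_\lambda^A$.

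For part (2), I would invoke the center of $\rU(\fgl(E))$, which lies in $R$ since $\fgl(E) \subset \fso(\bE)$ and $\GL(E)$-invariance coincides with centrality for the connected group $\GL(E)$. This center acts on $\bS_\lambda^A$ via the Harish-Chandra character associated to the Schur functor $\bS_\lambda(E)$, and this character determines $\lambda$ (partitions are the dominant representatives of their Weyl orbits, so the Harish-Chandra isomorphism separates them). Hence $\bS_\lambda^A$ and $\bS_{\lambda'}^A$ for $\lambda \neq \lambda'$ admit different central characters under $R$ and are therefore non-isomorphic as $R$-modules.

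The main obstacle is establishing the decomposition $M' = \bigoplus_\mu \bS_\mu(E) \otimes N_\mu$ with $N_\lambda = N$ in part (1). The subtlety is that non-invariant isotypic parts $\rU(\fso(\bE))_\sigma$ (for $\sigma \neq 0$) can shift the $\GL(E)$-type and contribute back to the $\bS_\lambda$-isotypic component whenever $\bS_\lambda(E)$ appears in $\bS_\sigma(E) \otimes \bS_\lambda(E)$. One must argue that any such contribution is already obtained by applying an element of $R$ to $N$, which is the technical core of the Howe-duality framework in this infinite-dimensional setting and is where the structural results of Section~\ref{sec:howe} will be invoked.
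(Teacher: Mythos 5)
Your overall strategy is aligned with the paper's: interpret $\bS_\lambda^A$ as the $\GL(E)$-multiplicity space in $Z_{V,E}$, and reduce everything to the irreducibility of $Z_{V,E}$ over $\fso(\bE)$ (Corollary~\ref{cor:irredAndTrivialJ}). However, the paper does not unfold this argument at all: it cites \cite[Theorem 5.8]{cheng2012dualities}, which is the general $(\fg,K)$-module fact (Lepowsky--McCollum/Kostant type) that for an irreducible admissible module, the $K$-multiplicity spaces are irreducible pairwise non-isomorphic modules over $\rU(\fg)^K$. Your proof is essentially an attempt to reprove that cited theorem.

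For part (1), the step you flag as the ``technical core'' is in fact circular as stated. You claim one can verify $N_\lambda = N$, i.e.\ that projecting $\rU(\fso(\bE))\cdot(\bS_\lambda(E)\otimes N)$ back to the $\bS_\lambda$-isotypic component stays inside $\bS_\lambda(E)\otimes N$. Writing $\Phi(u)=p_\lambda\circ u\circ\iota_\lambda\in\End(\bS_\lambda(E)\otimes W_\lambda)$, the contributions from the non-trivial $\GL(E)$-isotypic pieces $u_\sigma$ give operators of the form $a\otimes T_{\sigma,a}$ with $a\in[\End\bS_\lambda(E)]_\sigma$ and $T_{\sigma,a}\in\End(W_\lambda)$ that need not preserve $N$. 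There is no a priori reason for the $T_{\sigma,a}$ to lie in the image of $R$, and demanding that $N$ be invariant under all such $T_{\sigma,a}$ is essentially asking $N$ to be invariant under all of $\End(W_\lambda)$---which forces $N\in\{0,W_\lambda\}$ and is exactly what you are trying to prove. The correct argument runs in the opposite direction: by Jacobson density (since $Z_{V,E}$ is $\rU(\fso(\bE))$-irreducible), for any $T\in\End(W_\lambda)$ there is $u\in\rU(\fso(\bE))$ restricting to $\mathbf{1}\otimes T$ on the finite-dimensional subspace $\bS_\lambda(E)\otimes W_\lambda$; then projecting $u$ to its $\GL(E)$-invariant component $u_0\in R$ and using $\GL(E)$-equivariance of the restriction map, one finds $u_0$ also restricts to $\mathbf{1}\otimes T$. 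Hence the image of $R$ in $\End(W_\lambda)$ is everything, which yields both irreducibility and (by choosing $T$ to separate two distinct $K$-types in the obvious way) non-isomorphism. This is precisely the content of Cheng--Wang Theorem~5.8, and it is a general Lie-theoretic fact, not a consequence of the specific Howe duality with $\fso(2n)$ in \S\ref{sec:howe}. That section is used earlier to establish the character and the irreducibility of $Z_{V,E}$, not to fill this gap.

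Your part (2) argument via central characters of $\rU(\fgl(E))$ is correct and gives a valid, more hands-on alternative to the Jacobson density route for non-isomorphism: $Z(\rU(\fgl(E))) = \rU(\fgl(E))^{\GL(E)} \subseteq R$ acts on $\bS_\lambda(E)\otimes W_\lambda$ through the first factor (up to a $\lambda$-independent character twist from $\zeta$), and the Harish--Chandra parametrization separates distinct partitions. That part is fine; it is part (1) that needs the density argument rather than the $M'$-decomposition you sketch.
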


This is an immediate consequence of \cite[Theorem 5.8]{cheng2012dualities}.

\begin{remark}
  As alluded to in the introduction, we do not understand how to describe the algebra $R$ in a more explicit fashion. It contains the subalgebra generated by $\fso(V)$ and is strictly larger. Also, we are actually interested in the image of $R$ in the endomorphism algebra of $Z_{V,E}$.

  The algebra $R$ has some interesting properties as the dimension of $E$ grows. For instance, let $\lambda$ be any partition of length $\dim V-1$; then there are $\bO (V)$-equivariant isomorphisms
    $$\bS^A_{(\lambda, 1^i)} \cong \bS^A_{(\lambda , 1^{i-2})} \quad \text{for all} \ i \geq 2.$$
    On the other hand, if $\dim E \geq \dim V + i$, then Theorem \ref{thm:irredQuadricThm} tells us that $\bS^A_{(\lambda, 1^i)}$ and $\bS^A_{(\lambda , 1^{i-2})}$ are non-isomorphic $R$-modules.
\end{remark}

For the remainder of this section, we explicitly construct a subalgebra of $R$ that is strictly larger than $\rU (\so (V))$.

In the following, we employ notation from \cite[\S 20.1]{fulton2013representation} on Clifford algebras. In particular, the Clifford algebra $\Cliff({\bf E})$ is the tensor algebra of ${\bf E}$ modulo the relations
\[
  v w  + wv = 2\beta(v,w).
\]
We have a map $\psi \colon \bigwedge^2 {\bf E} \to  \Cliff({\bf E})$ given by
\[
  \psi(v \wedge w) = \frac12 (vw - wv) = vw - \beta(v,w).
\]
Then $\psi$ is a homomorphism of Lie algebras, where $\bigwedge^2 {\bf E}$ has the bracket (coming from its isomorphism with $\so({\bf E})$)
\[
  [a\wedge b, c \wedge d] = 2 (\beta(b,c) a \wedge d + \beta(a,d) b \wedge c - \beta(b,d) a \wedge c - \beta(a,c) b \wedge d),
\]
and $\Cliff({\bf E})$ is given the usual commutator bracket from its associative algebra structure.

In particular, $\psi$ extends to a homomorphism of associative algebras
\[
  \psi \colon \uU(\so({\bf E})) \to \Cliff({\bf E}).
\]

Recall that $\Cliff({\bf E}) \cong \bigwedge^\bullet {\bf E}$ as an $\so({\bf E})$-representation. For all $d,e \ge 0$, we have a $\GL(E)$-invariant subspace
\begin{equation}\label{eqn:extAlgInc}
    \bigwedge^{2d} V \subset \bigwedge^e E^* \otimes \bigwedge^{2d} V \otimes \bigwedge^e E \subset \bigwedge^{2d+2e}(E^* \oplus V \oplus E).
\end{equation}
Let us focus on the case $d=e=1$. Choose a basis $e_1,\dots,e_n$ for $E$ and let $e_{-1},\dots,e_{-n}$ be the dual basis of $E^*$. Let $v_1,\dots,v_{2m+1}$ be an orthonormal basis for $V$, so that $\psi(v_i\wedge v_j) = v_iv_j$. Then the embedding above is given by the formula
\[
  v_i \wedge v_j \mapsto \sum_{\alpha} e_{-\alpha} v_i v_j e_\alpha.
\]
To lift this to a map $\bigwedge^2 V \to \uU(\so({\bf E}))$, treat $e_{-\alpha} v_i$ as an element of $E^* \otimes V$ and similarly treat $v_j e_{\alpha}$ as an element of $V \otimes E$. Recall that we have the decomposition
\[
  \so({\bf E}) \cong \bigwedge^2 E^* \oplus (E^* \otimes V) \oplus (\so(V) \times \gl(E)) \oplus (V \otimes E) \oplus \bigwedge^2 E.
\]
So define
\[
  \phi_{i,j} = \sum_\alpha (e_{-\alpha} v_i) (v_j e_\alpha) \in \uU(\so({\bf E})).
\]
Let us compute the action of $\phi_{i,j}$ on 
\[
  \bigwedge^2 E \subset S^\bullet (\bigwedge^2 E) \subset S^\bullet (V \otimes E) \otimes S^\bullet (\bigwedge^2 E)\cong M(\zeta),
\]
where we recall that $M(\zeta) = {\rm Ind}_\fp^{\fso(\bE)} \bC_\zeta$ denotes the parabolic Verma module for the weight $\zeta$ defined in \eqref{eqn:weight}. Employing the notation $e_{12} = e_1 \wedge e_2$, we have
\begin{align*}
  \phi_{i,j} \cdot  e_{12} &= \sum_{\alpha} (e_{-\alpha} v_i) (v_je_\alpha) (e_{12}) \\
                    &= \sum_{\alpha} (v_je_\alpha)(e_{-\alpha} v_i) (e_{12}) + 2n (v_iv_j)(e_{12})\\
                    &= \sum_{\alpha} (v_je_\alpha)(e_{12}) (e_{-\alpha} v_i) - 2(v_je_1)(v_ie_2) + 2(v_je_2)(v_ie_1)  + 2n (v_iv_j)(e_{12}).
\end{align*}
To simplify: by construction $e_{-\alpha} v_i$ acts trivially on $\bC_\zeta$, so the summation becomes 0. For the last term: $(v_iv_j)$ and $(e_{12})$ commute, and $v_iv_j$ also acts trivially on $\bC_\zeta$, so the term $2n (v_iv_j)(e_{12}) $ is also $0$. The middle two terms are precisely a $2 \times 2$ minor in $\bigwedge^2 V \otimes \bigwedge^2 E$ (up to a factor of $2$).

Recall that the multiplicity space of $\bigwedge^2 E$ in the Verma module $M(\zeta)$ is $\bigwedge^2 V \oplus \bC$, in which case the above shows that $\phi_{i,j}$ induces a nonzero action from $\bC$ to $\bigwedge^2 V$.

\subsection{Orthosymplectic analogue} \label{ss:orthosymp-version}

Going back to the table in the introduction, there is a parallel of the construction in this paper for the supergroup $\Sp\bO(2m|1) = \Sp\bO(U)$. More precisely, under the isomorphism $\rK(\Sp\bO(U)) \cong \rK(\bO(2m+1))$, the classes of exterior powers $[\bigwedge^i(\bC^{2m+1})]$ correspond to the classes of the cokernels $\bigwedge^{i-2}(U) \to \bigwedge^i(U)$.

In particular, the class of the exterior algebra $\bigwedge^\bullet(\bC^{2m+1})$ corresponds to the class of the quotient
\[
  C = \bigwedge^\bullet(U)/(\omega)
\]
where $\omega$ is the quadratic element given by the symplectic form. In coordinates, if we choose a basis $x_1,\dots,x_{2m}$ for $\bC^{2m}$ and $x_0$ for $\bC^1$, then
\[
  \omega = \sum_{i=1}^m x_{2i-1} \wedge x_{2i} + x_0^2.
\]
The algebra $C$ is a Koszul algebra, as can be seen directly by considering the deformed quadric $\omega_t \coloneqq \sum_{i=1}^m t \cdot x_{2i-1} \wedge x_{2i} + x_0^2$: when $t= 0$ the quotient $\bigwedge^\bullet (U) / (\omega_0)$ decomposes as the tensor product of Koszul algebras $\bigwedge^\bullet (U_0) \otimes S^\bullet (U_1) / (x_0^2)$, and is thus itself Koszul. Flatness thus forces $C$ to be Koszul.

In the language of \cite{sam2024total}, the exterior algebra on $\bC^{2m+1}$ is $\bO(2m+1)$-totally positive. We immediately deduce the following:

\begin{proposition}
  The Koszul algebra $C$ is $\Sp\bO(2m|1)$-totally positive.
\end{proposition}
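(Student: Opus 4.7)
The plan is to transfer the $\bO(2m+1)$-total positivity of $\bigwedge^\bullet(\bC^{2m+1})$ to $\Sp\bO(2m|1)$-total positivity of $C$ via the character ring isomorphism of Proposition~\ref{prop:comm-square}. The proof has a character-level part (identification of classes) and a functorial part (upgrading to honest modules using Koszulity).

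First I would identify the classes. Since $\omega$ is a non-zerodivisor on $\bigwedge^\bullet(U)$ (this is a consequence of the flat deformation to $\omega_0$ used to establish Koszulity: for $t=0$ one checks directly in each tensor factor, and flatness then implies non-zerodivisor property for $t=1$), there is a short exact sequence
\[
  0 \to \bigwedge^{i-2}(U) \xrightarrow{\wedge \omega} \bigwedge^i(U) \to C_i \to 0
\]
of $\Sp\bO(U)$-representations, giving $[C_i] = [\bigwedge^i(U)] - [\bigwedge^{i-2}(U)]$ in $\rK(\Sp\bO(U))$. By the description of the isomorphism $\rK(\Sp\bO(2m|1)) \cong \rK(\bO(2m+1))$ recalled in the paragraph preceding the proposition, this class corresponds to $[\bigwedge^i(\bC^{2m+1})]$. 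Summing over $i$ and grading identifies the classes of $C$ and $\bigwedge^\bullet(\bC^{2m+1})$ as graded modules.

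Next I would transport positivity. $\Sp\bO(U)$-total positivity in the sense of \cite{sam2024total} is (given the existence of appropriate Schur modules) the statement that certain Jacobi--Trudi expressions in the graded pieces of $C$ expand as $\bZ_{\ge 0}$-combinations of classes of irreducible $\Sp\bO(U)$-representations. Proposition~\ref{prop:comm-square} gives a \emph{ring} isomorphism that preserves the indexing of irreducibles by admissible partitions. Hence any such Jacobi--Trudi expression in $[C_i]$ maps to the analogous Jacobi--Trudi expression in $[\bigwedge^i(\bC^{2m+1})]$, whose positivity follows from the $\bO(2m+1)$-total positivity of the exterior algebra asserted in the preceding text. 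Pulling back through the isomorphism gives the required non-negative expansion in irreducibles of $\Sp\bO(U)$.

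Finally, to upgrade from character positivity to actual module-level positivity (as required by the definition of total positivity), I would invoke Koszulity of $C$: for a Koszul algebra the Schur module $\bS_\lambda^C$ admits a standard construction as an $\Sp\bO(U) \times \GL_n$-equivariant subquotient of tensor powers, whose character is computed by the Jacobi--Trudi formula applied to $[C]$. The main technical point is verifying this functorial upgrade goes through in the super setting (rather than any genuine obstacle to positivity), but since the Koszul deformation argument preserves $\Sp\bO(U)$-equivariance and all finite-dimensional $\Sp\bO(U)$-representations are semisimple (\cite[Corollary 2.33]{cheng2012dualities}), the construction works verbatim; the character identification above then delivers the non-negative decomposition of $\bS_\lambda^C$ into irreducibles, which is the conclusion.
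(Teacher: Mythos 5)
Your first two paragraphs match the paper's (very short) argument: identify $[C]$ with $[\bigwedge^\bullet(\bC^{2m+1})]$ via the character-ring isomorphism, then transport the known $\bO(2m+1)$-total positivity of the exterior algebra through the isomorphism of Proposition~\ref{prop:comm-square}, which preserves the indexing of irreducibles. That is exactly what the paper means by ``we immediately deduce,'' and your supply of the non-zerodivisor fact for $\omega$ (via the flat deformation to $\omega_0$) is a reasonable way to justify the short exact sequences underlying the class computation.

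The third paragraph, however, is a genuine misstep. In the terminology of \cite{sam2024total}, ``$G$-totally positive'' is a statement at the level of characters / K-classes (non-negativity of the Jacobi--Trudi determinants as a combination of irreducible characters); it does not by itself require producing an actual functorial Schur module $\bS^C_\lambda$. The paper's introduction makes this distinction explicit: total positivity is a ``formal consequence of the \dots discussion regarding characters,'' while the module-level ``Jacobi--Trudi structure'' with good functorial properties is the substantially harder thing. So your ``upgrade to module-level'' step is not required for the proposition. Worse, the specific claim that ``the construction works verbatim'' in the super setting is precisely what the paper declares it does \emph{not} know how to prove: the sentence immediately following the proposition says the authors do not know how to establish the analogue of the main theorem for $C$, and the remainder of \S\ref{ss:orthosymp-version} formulates this as a conjecture. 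Your appeal to Koszulity plus semisimplicity does not bridge that gap (the difficulty lies in proving the analogue of Proposition~\ref{prop:main} and Corollary~\ref{cor:irredAndTrivialJ} over $\fsp\fo(2m+2n|1)$, for which the relevant Verma-module technology is not available in the super setting). Since the first two paragraphs already prove the stated proposition, you should simply delete the third paragraph rather than present an unsubstantiated claim as part of the proof.
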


We can think of $C$ as the analogue of the quadric hypersurface for the group $\Sp\bO(2m|1)$, so this provides an orthosymplectic parallel to what we have been studying. We do not know how to prove the analogue of our current main theorem about ``good functorial properties'' but it is easy to formulate a conjecture, so we will conclude the paper by doing so.

Namely, let $E$ be a vector space and define
\[
  \fg'_{U,E} = (U \otimes E) \oplus S^2 E.
\]
This is naturally a superspace with even part $(U_0 \otimes E) \oplus S^2 E$ and odd part $U_1 \otimes E$. We define a Lie superbracket as follows: the elements of $S^2 E$ are central and otherwise for $u \otimes e, u' \otimes e' \in U \otimes E$, we define
\[
  [u \otimes e, u' \otimes e'] = \omega(u,u') e e' \in S^2 E.
\]

We remark that $\fg'_{U,E}$ has a grading where $\deg(U \otimes E)=1$ and $\deg(S^2E)=2$ which is compatible with this bracket, but this grading {\it does not} collapse to the $\bZ/2$-grading given by the Lie superalgebra structure. We will use this grading to define Hilbert series below.

Using $\omega$ and the isomorphism $U \cong U^*$, we also have a subspace
\[
  \bigwedge^2 E \subset \bigwedge^2 U \otimes \bigwedge^2 E \subset S^2(U \otimes E),
\]
and we can take the left ideal $I'$ generated by $\bigwedge^2 E$ in the enveloping algebra $\rU(\fg'_{U,E})$. Finally, we define
\[
  Z'_{U,E} = \rU(\fg'_{U,E}) / I'.
\]

\begin{conjecture}
  Let $n = \dim E$. Then we have an $\Sp\bO(U)$-equivariant isomorphism
  \[
    Z'_{U,E} \cong C^{\otimes n}.
  \]
  In particular, we have the following identity of Hilbert series:
  \[
    \rH\rS(Z'_{U,E}) = \rH\rS(C)^n.
  \]
\end{conjecture}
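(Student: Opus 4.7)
The plan is to follow the blueprint of the proof of our main theorem in \S\ref{sec:generalOddRank}, interchanging symmetric and exterior powers on the $E$-side throughout. First, I would realize $\fg'_{U,E}$ as the nilpotent radical of a parabolic subalgebra of $\spo(\bE)$ where $\bE \coloneq E^* \oplus U \oplus E$ is equipped with the super-antisymmetric bilinear form extending $\omega$ on $U$ and pairing $E^*$ with $E$ symplectically on the even part. The $\bZ$-grading $\deg E^* = -1$, $\deg U = 0$, $\deg E = 1$ induces a $\bZ$-grading on $\spo(\bE) \cong \spo(2m + 2n\mid 1)$, and a direct computation dual to the one before Proposition~\ref{prop:main} identifies the positive part with $\fg'_{U,E}$, with Levi $\fgl(E) \times \spo(U)$. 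Hence $\rU(\fg'_{U,E})$, appropriately twisted, is a parabolic Verma module $M(\zeta')$ for $\spo(\bE)$ with respect to the opposite parabolic.

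Next, I would show that the left ideal $I'$ is an $\spo(\bE)$-submodule of $M(\zeta')$ by identifying it with the image of a standard homomorphism of parabolic Verma modules $M(\mu') \to M(\zeta')$, where $\mu' = s \bullet \zeta'$ for a single reflection along a positive root such that the resulting Levi-highest-weight subspace is exactly the copy of $\bigwedge^2 E$. With $Z'_{U,E}$ now carrying an $\spo(\bE)$-action, the PBW degeneration supplies an upper bound on its character by $(S^\bullet(U \otimes E)/(\bigwedge^2 E)) \otimes S^\bullet(S^2 E)$, which via the Littlewood-type decomposition of \S\ref{sec:LittlewoodCons} equals $\bigoplus_\lambda \bS_{[\lambda]}(U) \otimes \bS_\lambda(E) \otimes S^\bullet(S^2 E)$ summed over admissible $\lambda$. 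A matching lower bound comes from an analogue of the Howe dual pair of \S\ref{sec:howe} but with $\fsp(E \oplus E^*)$ replacing $\fso(E \oplus E^*)$, since the defining quadratic relations now sit in $\bigwedge^2 U \otimes \bigwedge^2 E$ rather than $S^2 U \otimes S^2 E$. As in Corollary~\ref{cor:irredAndTrivialJ}, irreducibility of the resulting highest-weight modules forces the two bounds to coincide, pinning down the $\Sp\bO(U)$-character of $Z'_{U,E}$.

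Finally, I would match this character against $[C^{\otimes n}]$. Under the isomorphism of character rings (Proposition~\ref{prop:comm-square}), the class $[C] = [\bigwedge^\bullet U / (\omega)]$ corresponds to $[\bigwedge^\bullet V]$, and hence $[C^{\otimes n}]$ corresponds to $[\bigwedge^\bullet(V \otimes E)]$, which admits the classical Cauchy+Littlewood decomposition into $\bO(V)$-irreducibles. A determinantal comparison mirroring Corollary~\ref{cor:char-id} identifies this with the expression computed in the previous paragraph, giving $[Z'_{U,E}] = [C^{\otimes n}]$ in $\rK(\Sp\bO(U))$. Since both modules are finite-dimensional (the count recovers the Hilbert series $(1+t)^{n(2m+1)}$) and finite-dimensional $\Sp\bO(U)$-representations are semisimple, this character equality upgrades to the desired equivariant isomorphism.

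The principal obstacle is showing that $I'$ is $\spo(\bE)$-stable: Boe's theorem \cite{boe1985homomorphisms} governs parabolic Verma module homomorphisms in the semisimple (non-super) setting, and its extension to the basic classical Lie superalgebra $\spo(2m+2n\mid 1)$ requires care with odd reflections and with the fact that not every weight affords a central character. A secondary difficulty is transferring the Howe-dual machinery of \S\ref{sec:howe} cleanly to this mixed setting; the supercommutator computations of \S\ref{ss:calc} would need to be redone with $\fsp$ in place of $\fso$, and the irreducibility input (an analogue of \cite[Proposition 5.12]{cheng2012dualities}) must be verified for the new pair.
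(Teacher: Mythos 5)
This statement is a \emph{conjecture}, not a theorem; the paper deliberately does not prove it, and the final two sentences of \S\ref{ss:orthosymp-version} say explicitly that ``the arguments from this article {\it should} extend to this case, but we are not aware of sources for the relevant Lie theory facts.'' Your proposal is a good reconstruction of the route the authors themselves have in mind: realize $\fg'_{U,E}$ as the nilpotent radical of a parabolic inside $\fsp\fo(2m+2n|1)$, show the left ideal $I'$ is the image of a standard homomorphism of parabolic Verma modules, bound the character above by the PBW degeneration, bound it below via the Levi-restricted highest-weight vectors and the super Howe dual pair $(\Sp\bO(U), \fsp(2n))$, and close the gap by an irreducibility argument as in Corollary~\ref{cor:irredAndTrivialJ}. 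You also correctly name the two load-bearing inputs that do not currently exist in the literature: a super analogue of Boe's theorem \cite{boe1985homomorphisms} on nonvanishing of standard maps, and the super Howe/irreducibility machinery of \S\ref{sec:howe} adapted to $\fsp$. These are precisely the reasons the paper leaves the statement as a conjecture. So what you have written is an accurate \emph{plan}, not a proof, and you have honestly flagged the gaps.

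Beyond the two gaps you name, one further point deserves emphasis: the irreducibility argument in Corollary~\ref{cor:irredAndTrivialJ} leans hard on central-character separation and on the structure of the (ordinary) type~B Weyl group of $\fso(\bE)$. For $\spo(2m+2n|1)$ the center of $\rU$ and the notion of ``linkage by the Weyl group'' are more delicate (atypicality, odd reflections), and it is not obvious that the coset-parity argument separating half-integral from integral coordinates carries over unchanged. This is exactly the sort of ``relevant Lie theory fact'' the paper says it cannot source. Finally, a small but real subtlety in your last paragraph: you invoke the classical Cauchy decomposition of $\bigwedge^\bullet(V\otimes E)$ and the fermionic Howe pair $(\bO(V),\fsp(2n))$ on the $\bO(2m+1)$ side of the character isomorphism; this is sound as a character check, but to upgrade the character equality to an $\Sp\bO(U)$-equivariant isomorphism you first need to know $Z'_{U,E}$ is finite-dimensional, which itself only follows \emph{after} the bounds are pinched together, so the logical ordering there should be made explicit.
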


We can realize $\fg'_{U,E}$ as the nilpotent radical of a parabolic subalgebra of the Lie superalgebra $\fsp\fo(2m+2n|1)$. The arguments from this article {\it should} extend to this case, but we are not aware of sources for the relevant Lie theory facts.

\bibliography{biblio}
      
\end{document}